\newtheorem{thm}{Theorem}[section]
\newtheorem{cor}[thm]{Corollary}
\newtheorem{lem}[thm]{Lemma}
\newtheorem{prop}[thm]{Proposition}
\theoremstyle{definition}
\newtheorem{defn}[thm]{Definition}
\theoremstyle{remark}
\newtheorem{example}[thm]{Example}
\numberwithin{equation}{section}
\newcommand{\bC}{\mathbb{C}}
\newcommand{\bF}{\mathbb{F}}
\newcommand{\bN}{\mathbb{N}}
\newcommand{\bQ}{\mathbb{Q}}
\newcommand{\bR}{\mathbb{R}}
\newcommand{\bZ}{\mathbb{Z}}
\newcommand{\gC}{\bold{C}}
\newcommand{\gK}{\bold{K}}
\newcommand{\gM}{\bold{M}}
\newcommand{\gS}{\bold{S}}
\newcommand{\gX}{\bold{X}}
\newcommand{\gY}{\bold{Y}}
\newcommand{\MT}[2]{\bold{MT #1}(#2)}
\newcommand\lra{\longrightarrow}
\newcommand\Diff{\mathrm{Diff}}
\newcommand\Emb{\mathrm{Emb}}
\newcommand\Bun{\mathrm{Bun}}
\newcommand{\hcoker}{/\!\!/}
\newcommand{\CircNum}[1]{\ooalign{\hfil\raise .00ex\hbox{\scriptsize #1}\hfil\crcr\mathhexbox20D}}
\newcommand{\X}{\mathbf{X}}
\newcommand{\Spin}{\mathrm{Spin}}
\newcommand{\divides}{\mid}
\newcommand{\ndivides}{\nmid}
\newcommand{\Pic}{\mathrm{Pic}}
\newcommand{\Td}{\mathrm{Td}}
\renewcommand{\top}{\mathrm{top}}
\newcommand{\hol}{\mathrm{hol}}
\newcommand{\alg}{\mathrm{alg}}
\mathchardef\ordinarycolon\mathcode`\:
\title[Moduli of $r$-Spin Riemann surfaces]{The Picard group of the moduli space of $r$-Spin Riemann surfaces}
\author{Oscar Randal-Williams}
\thanks{The author was supported by the EPSRC PhD Plus scheme, ERC Advanced Grant No.\ 228082, and the Danish National Research Foundation through the Centre for Symmetry and Deformation.}
\email{o.randal-williams@math.ku.dk}
\address{Department of Mathematical Sciences\\
University of Copenhagen\\
Universitetsparken 5\\
DK-2100 K{\o}benhavn {\O}\\
Denmark}
\subjclass[2010]{55R40, 57R15, 57R50, 57M07}
\keywords{r-Spin, Moduli space, Surface bundle, Picard group}
\date{\today}
\begin{document}

\begin{abstract}
We have recently proved a homological stability theorem for moduli spaces of $r$-Spin Riemann surfaces, which in particular implies a Madsen--Weiss theorem for these moduli spaces. This allows us to effectively study their stable cohomology, and to compute their stable rational cohomology and their integral Picard groups. Using these methods we give a complete description of their integral Picard groups for genus at least 9 in terms of geometrically defined generators, and determine the relations between them.
\end{abstract}
\maketitle

\section{Introduction}

Let $\gM_g$ be the moduli space of Riemann surfaces of genus $g$. This is the complex orbifold obtained from Teichm\"{u}ller space $\mathcal{T}_g$ as the quotient by the action of the mapping class group $\Gamma_g := \pi_0(\Diff^+(\Sigma_g))$, the group of isotopy classes of diffeomorphisms of a standard smooth closed surface of genus $g$. As Teichm\"{u}ller space is contractible, the orbifold fundamental group of $\gM_g$ is $\Gamma_g$. We wish to make a similar definition of the moduli space of $r$-Spin Riemann surfaces of genus $g$, but we must be careful as there are competing definitions. We will give two, and discuss which is more correct in \S \ref{sec:ModularInterpretation}. Throughout we consider only surfaces of genus at least two.

Fix a smooth almost complex surface $\Sigma$, and let $\Spin^r(\Sigma)$ denote the \emph{groupoid of $r$-Spin structures on $\Sigma$}. More precisely, its objects are pairs $\zeta = (L \to \Sigma, \varphi : T\Sigma \cong  L^{\otimes r})$ of a complex line bundle $L$ on $\Sigma$ and an isomorphism of complex line bundles from the tangent bundle of $\Sigma$ to the $r$-th power of $L$. The morphisms of this groupoid from $(L, \varphi)$ to $(L', \varphi')$ are given by the isomorphisms $\psi : L \cong L'$ such that $\psi^{\otimes r} \circ \varphi = \varphi'$. Given a diffeomorphism $f : \Sigma \to \Sigma'$ that preserves the almost complex structure, we define
$$f^*(L', \varphi') = (f^*L', T\Sigma \overset{Df}\to f^*T\Sigma' \overset{f^*\varphi'}\to f^*(L^{\otimes r}) \cong (f^*L)^{\otimes r}).$$
This provides an action of the group of diffeomorphisms of $\Sigma$ on the set of $r$-Spin structures. The following lemma was proved in \cite[Theorem 2.9]{RWFramedPinMCG}, and describes the components of the groupoid $\Spin^r(\Sigma)$ modulo the action of the mapping class group.
\begin{lem}\label{lem:Components}
Suppose that $g \geq 2$. The set $\pi_0\Spin^r(\Sigma_g)$ of isomorphism classes of $r$-Spin structures on $\Sigma_g$ is non-empty if and only if $r$ divides $\chi(\Sigma_g) = 2-2g$, in which case it consists of $r^{2g}$ elements. If $r$ is odd, $\pi_0\Spin^r(\Sigma_g) / \Gamma_g$ consists of a single element. If $r$ is even, $\pi_0\Spin^r(\Sigma_g) / \Gamma_g$ consists of two elements, and the map
$$\pi_0\Spin^r(\Sigma_g) / \Gamma_g \to \pi_0\Spin^2(\Sigma_g) / \Gamma_g$$
is a bijection. The two elements of $\pi_0\Spin^2(\Sigma_g) / \Gamma_g$ are distinguished by the Arf invariant of a quadratic form describing the Spin structure.
\end{lem}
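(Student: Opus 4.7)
The plan is to split the lemma into its cardinality statement and its $\Gamma_g$-orbit statement. For the first, I recall that complex line bundles on $\Sigma_g$ are classified up to isomorphism by their first Chern class in $H^2(\Sigma_g;\bZ)\cong\bZ$, so a line bundle $L$ with $L^{\otimes r}\cong T\Sigma$ exists if and only if $r\mid c_1(T\Sigma)=2-2g$. Fixing such an $L$, the isomorphism classes of pairs $(L,\varphi)$ form a torsor over the cokernel of the $r$-th power endomorphism of the group $\map(\Sigma,\bC^*)$. Since $\bC^*\simeq S^1$ and every null-homotopic map to $\bC^*$ admits an $r$-th root, this cokernel is $H^1(\Sigma_g;\bZ/r)\cong(\bZ/r)^{2g}$. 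Any two choices of $L$ give canonically bijective sets of isomorphism classes by transport along a chosen isomorphism $L\cong L'$, so $|\pi_0\Spin^r(\Sigma_g)|=r^{2g}$.

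For the orbit structure, I would identify $\pi_0\Spin^r(\Sigma_g)$ with an $H^1(\Sigma_g;\bZ/r)$-torsor on which $\Gamma_g$ acts by affine transformations, with linear part the standard representation factoring through $\mathrm{Sp}_{2g}(\bZ/r)$ and translation part a $1$-cocycle. The key computational input is the formula for the Dehn twist action: for a non-separating simple closed curve $\alpha$ and an $r$-Spin structure $\zeta$, one has
\[
T_\alpha\cdot\zeta - \zeta = \bigl(q_\zeta(\alpha)+1\bigr)\cdot\mathrm{PD}[\alpha]\in H^1(\Sigma_g;\bZ/r),
\]
where $q_\zeta(\alpha)\in\bZ/r$ is the monodromy (winding number) of $\zeta$ along $\alpha$. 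I would derive this by a direct local computation on an annular neighbourhood of $\alpha$.

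For $r$ odd, I would use this formula, together with the symplectic action of $\Gamma_g$ on $H^1(\Sigma_g;\bZ/r)$ and the assumption $g\ge 2$, to show that any prescribed difference $\zeta_1-\zeta_0\in H^1(\Sigma_g;\bZ/r)$ is realised as $\gamma\cdot\zeta_0-\zeta_0$ for some $\gamma\in\Gamma_g$. The payoff of $r$ being odd is that $q_\zeta(\alpha)$ can be varied freely (since $2$ is a unit mod $r$), so compositions of Dehn twists span all translations. For $r$ even, the assignment $(L,\varphi)\mapsto(L^{\otimes r/2},\varphi)$ provides a $\Gamma_g$-equivariant map $\pi_0\Spin^r(\Sigma_g)\to\pi_0\Spin^2(\Sigma_g)$; the target is partitioned into two orbits by the classical Arf invariant of the associated quadratic refinement of the intersection form on $H_1(\Sigma_g;\bZ/2)$. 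I would show this map is bijective on $\Gamma_g$-orbits by combining surjectivity (every Spin structure lifts, the Chern class divisibility being compatible) with injectivity on each fibre, the latter using the Dehn twist formula to show that the subgroup $2H^1(\Sigma_g;\bZ/r)$ is covered by the stabiliser in $\Gamma_g$ of a reduction.

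The main obstacle will be the orbit analysis in both cases: once the Dehn twist formula is in hand, the argument reduces to a linear-algebraic question over $\bZ/r$, but verifying transitivity (for $r$ odd) and single-fibre transitivity (for $r$ even) requires carefully exhibiting enough curves, and in the $r$ even case keeping track of which translations are attainable without altering the reduced Spin structure.
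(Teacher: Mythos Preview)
The paper does not prove this lemma; it simply cites it as \cite[Theorem~2.9]{RWFramedPinMCG}, an earlier paper of the same author. So there is no proof here to compare against directly.

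Your proposed approach is sound and is essentially the classical one, going back to Johnson's analysis of Spin structures and extended to $r$-Spin in the cited paper. The cardinality argument is correct as written. For the orbit analysis, the Dehn twist formula you quote is the right tool, and your identification of the key mechanism is accurate: for a fixed primitive homology class $a$, the values $q_\zeta(\alpha)$ attained by simple closed curves $\alpha$ representing $a$ form a coset of $2\bZ/r$ in $\bZ/r$ (passing a curve over a handle changes the winding number by $\pm 2$). For $r$ odd this coset is all of $\bZ/r$, so a single Dehn twist can realise any multiple of $\mathrm{PD}[a]$ as a translation, giving transitivity. For $r$ even the achievable translations in each homology direction are the even multiples, and this is exactly what is needed to act transitively on each fibre of the reduction to $\pi_0\Spin^2(\Sigma_g)$; the two $\Gamma_g$-orbits on the target are the classical Arf invariant classification. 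Your honest acknowledgement that the orbit analysis is where the work lies is appropriate: the argument does require producing, for each primitive class and each parity, curves with the required winding numbers, and this uses $g\geq 2$ in an essential way (one needs spare handles to adjust winding numbers without changing homology). The cited paper carries this out in detail, organising the bookkeeping via a notion of $\bZ/r$-valued quadratic refinement of the intersection form.
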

The remaining structure of the groupoid is described by the following lemma.
\begin{lem}
Every element of $\Spin^r(\Sigma)$ has automorphism group $\bZ/r$.
\end{lem}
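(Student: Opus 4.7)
An automorphism of an object $(L,\varphi)$ of $\Spin^r(\Sigma)$ is, by the definition given in the paper, an isomorphism $\psi : L \to L$ of complex line bundles satisfying $\psi^{\otimes r} \circ \varphi = \varphi$. Since $\varphi$ is an isomorphism, this condition is equivalent to $\psi^{\otimes r} = \mathrm{id}_{L^{\otimes r}}$. So the plan is to identify the set of such $\psi$ with $\mu_r \cong \bZ/r$.

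First I would use the standard fact that the group of bundle automorphisms of a complex line bundle $L \to \Sigma$ is naturally identified with the group $C(\Sigma, \bC^*)$ of continuous (or smooth, as appropriate) maps $\Sigma \to \bC^*$: an automorphism $\psi$ corresponds to the function $x \mapsto \psi_x$, where we view $\psi_x \in \mathrm{Aut}(L_x) \cong \bC^*$ after choosing a local trivialization (the identification is independent of trivialization since $\bC^*$ is abelian). Under this identification the $r$-th tensor power of $\psi$ corresponds to the pointwise $r$-th power of the function.

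Thus automorphisms of $(L,\varphi)$ are in bijection with continuous functions $f : \Sigma \to \bC^*$ with $f(x)^r = 1$ for all $x \in \Sigma$, i.e.\ functions $\Sigma \to \mu_r$. Since the group $\mu_r$ of $r$-th roots of unity is a discrete subset of $\bC^*$ and $\Sigma$ is connected (we are implicitly in the closed connected surface setting of Lemma~\ref{lem:Components}), any such function must be constant, and so the automorphism group is canonically $\mu_r \cong \bZ/r$.

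There is no real obstacle here; the only thing to be careful about is the identification of bundle automorphisms with $C(\Sigma,\bC^*)$ and the observation that taking the $r$-th tensor power of a bundle automorphism corresponds to the pointwise $r$-th power of the associated function, both of which are standard. Connectedness of $\Sigma$ does the rest.
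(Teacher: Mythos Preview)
Your argument is correct. The paper states this lemma without proof, presumably regarding it as elementary; your argument is the standard one and there is nothing to compare against.
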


We may now give our first definition. We can choose an $r$-Spin structure $\zeta$ on $\Sigma_g$, and let $G_g^{1/r}(\zeta)$ be the subgroup of $\Gamma_g$ that preserves $\zeta$ up to isomorphism, that is, those diffeomorphisms $f$ such that $f^*(\zeta) \cong \zeta$. Equivalently it is the stabiliser of $\zeta$ for the action of $\Gamma_g$ on $\pi_0\Spin^r(\Sigma_g)$. We define
$$\widetilde{\gM}_g^{1/r}(\zeta) := \mathcal{T}_g \hcoker G_g^{1/r}(\zeta),$$
where the quotient is taken in the orbifold sense. As the set $\pi_0\Spin^r(\Sigma_g)$ is finite, $G_g^{1/r}(\zeta)$ is a finite index subgroup of the mapping class group, and hence
$$\widetilde{\gM}_g^{1/r}(\zeta) \lra \gM_g$$
is a finite covering map. Readers who are unenthusiastic about orbifolds may instead take the homotopy quotient, that is, the Borel construction $EG_g^{1/r}(\zeta) \times_{G_g^{1/r}(\zeta)} \mathcal{T}_g$, or the stack quotient.

For our second definition, we employ another group $\Gamma^{1/r}_{g}(\zeta)$ which is slightly more difficult to describe: to do so, we will first define a topological group $\Diff(\Sigma_g, \zeta)$. As a set it consists of pairs $(f \in \Diff^+(\Sigma_g), \rho : f^*\zeta \cong \zeta)$, and is equipped with the group law defined by the formula
$$(f, \rho) \cdot (g, \sigma) = (f \circ g, (f \circ g)^*\zeta \cong f^*g^* \zeta \overset{f^*\sigma}\cong f^*\zeta \overset{\rho}\cong \zeta).$$
The evident homomorphism $\Diff(\Sigma_g, \zeta) \to \Diff^+(\Sigma_g)$ has image those path components contained in $G_g^{1/r}(\zeta)$, and has kernel $\bZ/r$. We topologise $\Diff(\Sigma_g, \zeta)$ as a covering group of its image, and define
$$\Gamma^{1/r}_{g}(\zeta) := \pi_0(\Diff(\Sigma_g, \zeta)),$$
which is isomorphic to the quotient of $\Diff(\Sigma_g, \zeta)$ by the normal subgroup $\Diff_0(\Sigma_g, \zeta)$ given by the path component of the identity. By a theorem of Earle and Eells \cite{EE}, the group $\Diff^+_0(\Sigma_g)$ is contractible for $g > 1$, and so has no non-trivial connected covering groups. Thus $\Diff_0(\Sigma_g, \zeta) \cong \Diff^+_0(\Sigma_g)$, and hence the action of $\Diff(\Sigma_g, \zeta)$ on the space $M(\Sigma_g)$ of complex structures on $\Sigma_g$ descends to an action of $\Gamma_g^{1/r}(\zeta)$ on Techm{\"u}ller space $\mathcal{T}_g = M(\Sigma_g) / \Diff_0(\Sigma_g, \zeta)$, and we may define the orbifold
$$\gM_g^{1/r}(\zeta) := \mathcal{T}_g \hcoker \Gamma^{1/r}_g(\zeta).$$
By definition, the group $\Gamma^{1/r}_{g}(\zeta)$ fits into the central extension
\begin{equation}\label{eq:GroupExt}
0 \lra \bZ/r \lra \Gamma^{1/r}_{g}(\zeta) \lra G_g^{1/r}(\zeta) \lra 0
\end{equation}
and so acts through $G_g^{1/r}(\zeta)$, and hence $\Gamma_g$, on Teichm\"{u}ller space. In particular both groups act by biholomorphisms (as $\Gamma_g$ does), and so $\widetilde{\gM}_g^{1/r}(\zeta)$ and $\gM_g^{1/r}(\zeta)$ are complex orbifolds. The natural map
\begin{equation}\label{eq:Gerbe}
\gM_g^{1/r}(\zeta) \lra \widetilde{\gM}_g^{1/r}(\zeta)
\end{equation}
has the structure of a $\bZ/r$-gerbe, which follows from the extension (\ref{eq:GroupExt}). Furthermore, as $\mathcal{T}_g$ is contractible these are both orbifold Eilenberg--MacLane spaces, and so all statements about their homology apply equally well to the group homology of the groups $G_g^{1/r}(\zeta)$ and $\Gamma_g^{1/r}(\zeta)$. 

If the $r$-Spin structures $\zeta$ and $\zeta'$ are identified under the action of $\Gamma_g$, then the groups $G_g^{1/r}(\zeta)$ and $G_g^{1/r}(\zeta')$ are conjugate in $\Gamma_g$ and so the corresponding $r$-Spin moduli spaces---of both types---are homeomorphic. By Lemma \ref{lem:Components}, when $r$ is odd there is, up to homeomorphism, a single moduli space of each type, which we write as $\widetilde{\gM}_g^{1/r}$ and $\gM_g^{1/r}$. When $r$ is even there are two of each type, which we write as $\widetilde{\gM}_g^{1/r}[\epsilon]$ and $\gM_g^{1/r}[\epsilon]$ for $\epsilon=0$ or $1$. In this case we write $\widetilde{\gM}_g^{1/r}$ and $\gM_g^{1/r}$ for the disjoint union of the two path components.

\subsection{Modular interpretation of $\widetilde{\gM}_g^{1/r}$ and $\gM_g^{1/r}$}\label{sec:ModularInterpretation}

The (orbi)bundle $\pi:\gC_g^{1/r} \to \gM_g^{1/r}$ is equipped with a holomorphic line bundle $L \to \gC_g^{1/r}$ and an isomorphism $L^{\otimes r} \cong \omega_\pi$, and so is a family of $r$-Spin Riemann surfaces over $\gM_g^{1/r}$. Furthermore, it is universal with this property. This may be seen as follows: an $r$-Spin structure $\zeta$ gives a line bundle $L$ on the family of Riemann surfaces $\pi:\mathcal{C}_g \to \mathcal{T}_g$ and an isomorphism $\varphi: T_v \cong L^{\otimes r}$. The line bundle $T_v \to \mathcal{C}_g$ is fibrewise holomorphic and so $L \to \mathcal{C}_g$ is too. The element $(f, \rho) \in \Gamma_g^{1/r}(\zeta)$ acts on $\mathcal{T}_g$ and $\mathcal{C}_g$, and gives an isomorphism $\rho : f^*L \cong L$ such that $\rho^{\otimes r}$ commutes with $\varphi$, and hence it descends to the orbifold quotient.

The orbifold $\widetilde{\gM}_g^{1/r}$ has a different modular interpretation. The (orbi)bundle $\pi:\gC_g \to \gM_g$ has a fibrewise Picard variety $\mathrm{Pic}(\gC_g) \to \gM_g$, which has a canonical section given by the canonical divisor $\omega_\pi$. The universal property of the family $\widetilde{\gC}_g^{1/r} \to \widetilde{\gM}_g^{1/r}$ is that in addition it has another section $\ell$ such that $r\ell = \omega_\pi$. Thus each fibre admits the structure of an $r$-Spin Riemann surface, but there is no line bundle $L \to \gC_g^{1/r}$ whose $r$-th power is isomorphic to the (co)tangent bundle on each fibre. The obstruction to a section of the fibrewise Picard variety of a family $\pi:E \to B$ of Riemann surfaces extending to a line bundle on the total space has been studied by Ebert and the author \cite{ERW10}: there is a unique obstruction in $H^3(B;\bZ)$. In the case of $\widetilde{\gM}_g^{1/r}$ it is the class given by $\beta(c)$, the Bockstein to integral cohomology of the class $c \in H^2(\widetilde{\gM}_g^{1/r};\bZ/r)$ classifying the extension (\ref{eq:GroupExt}), or equivalently classifying the gerbe (\ref{eq:Gerbe}).

Considering these two different modular interpretations, we take the view that $\gM_g^{1/r}$ is the correct notion of a moduli space of $r$-Spin Riemann surfaces, and $\widetilde{\gM}_g^{1/r}$ is not; we call the latter the \emph{moduli space of $r$-theta-characteristics}, as $\widetilde{\gM}_g^{1/2}$ classifies families of Riemann surfaces with a theta-characteristic on each fibre.

\subsection{``Mumford conjecture" for moduli spaces of $r$-Spin Riemann surfaces}

From now on we denote by $\gM_g^{1/r}[\epsilon]$ either the whole of $\gM_g^{1/r}$ if $r$ is odd, or the component of Arf invariant $\epsilon$ if $r$ is even. By writing $\gM_g^{1/r}[\epsilon]$ we always imply that the space is non-empty, i.e.\ that $r \divides \chi(\Sigma_g)$. Whenever we write the cohomology of an orbifold, we always mean cohomology in the orbifold sense, not that of the coarse moduli space; of course if we take rational coefficients these coincide.

The Mumford--Morita--Miller classes $\kappa_i \in H^{2i}(\gM_g;\bZ)$ may be defined as the pushforward
$$\kappa_i := \pi_!(c_1(T_v)^{i+1})$$
where $\pi : \gC_{g} \to \gM_g$ is the universal family over $\gM_g$, and $T_v$ is the vertical tangent bundle (which is a complex line bundle). We will adopt the convention of always denoting families of Riemann surfaces by $\pi$ and their vertical tangent bundles by $T_v$, without further note. The \emph{Mumford conjecture}, proved by Madsen and Weiss \cite{MW}, states that the homomorphism
$$\bQ[\kappa_1, \kappa_2, \kappa_3, ...] \lra H^*(\gM_g;\bQ)$$
is an isomorphism in a certain range of degrees, which increases with $g$. Our first result is the analogue of this theorem for moduli spaces of $r$-Spin Riemann surfaces: the classes $\kappa_i$ may be pulled back to $\gM_g^{1/r}$ and we have

\begin{thm}\label{thm:MumfordConjecture}
The map
$$\bQ[\kappa_1, \kappa_2, \kappa_3, ...] \lra H^*(\gM_g^{1/r}[\epsilon];\bQ)$$
is an isomorphism in degrees $5* \leq 2g-7$. In fact, the map $\gM_g^{1/r}[\epsilon] \to \gM_g$ induces a homology isomorphism with $\bZ[1/r]$-coefficients in these ranges of degrees. 
Both statements are also true for $\widetilde{\gM}_g^{1/r}[\epsilon]$, as this is $\bZ[1/r]$-homology equivalent to $\gM_g^{1/r}[\epsilon]$ by the extension (\ref{eq:GroupExt}).
\end{thm}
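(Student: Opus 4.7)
The plan is to combine the homological stability theorem for $r$-Spin mapping class groups with the Madsen--Weiss-type theorem for $r$-Spin surfaces (both alluded to in the abstract as proved in companion work), and then to reduce the required comparison to a statement about the tangential structure $\theta^r : B\mathrm{SO}(2) \to B\mathrm{SO}(2)$ given by the $r$-th power map on complex line bundles.

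First I would invoke homological stability: in the claimed range $5* \leq 2g-7$ the homology of $\gM_g^{1/r}[\epsilon]$, and the effect of the forgetful map $\gM_g^{1/r}[\epsilon] \to \gM_g$, are independent of $g$ up to isomorphism, so it suffices to prove both assertions stably. The Madsen--Weiss theorem for $r$-Spin surfaces then identifies the stable homology of $\gM_g^{1/r}[\epsilon]$ with that of a component of $\Omega^\infty \mathbf{MT}\theta^r$, where $\mathbf{MT}\theta^r$ is the Madsen--Tillmann spectrum of $\theta^r$, that is, the Thom spectrum of the virtual bundle $-(\theta^r)^*\gamma$ over $B\mathrm{SO}(2)$. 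The classical Madsen--Weiss theorem does the same for $\gM_g$ with $\mathbf{MT}\mathrm{SO}(2)$, and the forgetful map $\gM_g^{1/r}[\epsilon] \to \gM_g$ is realised stably by the infinite loop map induced by $\theta^r$.

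The key observation is that $\theta^r$ is a $\bZ[1/r]$-homology equivalence: its homotopy fibre is $K(\bZ/r,1) = B\bZ/r$, whose reduced integral homology is entirely $r$-torsion and hence vanishes after inverting $r$. Consequently the induced map of Thom spectra $\mathbf{MT}\theta^r \to \mathbf{MT}\mathrm{SO}(2)$ is a $\bZ[1/r]$-equivalence, and passing to the relevant components of the infinite loop spaces yields the asserted $\bZ[1/r]$-homology isomorphism between the stable homologies of $\gM_g^{1/r}[\epsilon]$ and $\gM_g$. The rational Mumford statement follows from the classical Mumford conjecture, which identifies $H^*(\gM_g;\bQ)$ with $\bQ[\kappa_1, \kappa_2, \ldots]$ in the same range; since the classes $\kappa_i$ on $\gM_g^{1/r}[\epsilon]$ are by definition pulled back from $\gM_g$, the polynomial ring maps isomorphically.

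The statement for $\widetilde{\gM}_g^{1/r}[\epsilon]$ then follows from the $\bZ/r$-gerbe (\ref{eq:Gerbe}): the associated Serre spectral sequence has fibre $B\bZ/r$, which is $\bZ[1/r]$-acyclic, so the gerbe is itself a $\bZ[1/r]$-homology equivalence. The main obstacle is to verify that the Madsen--Weiss theorem applies component-by-component on $\gM_g^{1/r}$, so that for $r$ even it identifies the Arf component $\epsilon$ with the correct path component of $\Omega^\infty \mathbf{MT}\theta^r$; this component-matching is precisely the content of the companion Madsen--Weiss result and is what distinguishes the present theorem from an immediate consequence of the Mumford conjecture.
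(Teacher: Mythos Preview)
Your proposal is correct and follows essentially the same route as the paper: reduce to the infinite loop space via homological stability and the Madsen--Weiss-type comparison map (\ref{eq:ComparisonMap}), then show that the forgetful map $\MT{Spin^r}{2} \to \MT{SO}{2}$ is a $\bZ[1/r]$-equivalence and appeal to the classical Mumford conjecture. The only difference is cosmetic: the paper establishes the $\bZ[1/r]$-equivalence by computing the integral cohomology of the cofibre $\X_r$ directly via the Thom isomorphism (Proposition~\ref{prop:XrIntegralCohomology}) and observing it is $r$-power torsion, whereas you argue via the homotopy fibre $B\bZ/r$ of the base map $\theta^r$ and the Thom isomorphism; these are two sides of the same coin.
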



\subsection{Low-dimensional homology}

The main result of this paper is a computation of the Picard group of $\gM_g^{1/r}$ and a determination of a presentation for it. As these results are of interest in algebraic geometry, but the method of proof is via homotopy theory, we have endeavoured to give as detailed results as we can in the introduction to hopefully allow geometers to make use of them. This accounts for the length of this introduction.

We first turn our attention to the low-dimensional integral (co)homology of $\gM_g^{1/r}$, which is the same as the integral (co)homology of the groups $\Gamma_g^{1/r}(\zeta)$.

\begin{thm}[Low-dimensional homology]\label{thm:LowDimHomology}
Let $g \geq 6$. Then the first integral homology of $\gM_g^{1/r}[\epsilon]$ is given by
$$H_1(\gM_g^{1/r}[\epsilon];\bZ) \cong \begin{cases}
\bZ/4 & r \equiv 2 \,\, \mathrm{mod} \,\, 4\\
\bZ/8 & r \equiv 0 \,\, \mathrm{mod} \,\, 4\\
0 & \text{else}
\end{cases}
\oplus
\begin{cases}
\bZ/3 & r \equiv 0 \,\, \mathrm{mod} \,\, 3\\
0 & \text{else}
\end{cases}.$$

Let $g \geq 9$. Then the second rational homology of $\gM_g^{1/r}[\epsilon]$ has rank one. Thus,
$$H^2(\gM_g^{1/r}[\epsilon];\bZ) \cong \bZ \oplus
\begin{cases}
\bZ/4 & r \equiv 2 \,\, \mathrm{mod} \,\, 4\\
\bZ/8 & r \equiv 0 \,\, \mathrm{mod} \,\, 4\\
0 & \text{else}
\end{cases}
\oplus
\begin{cases}
\bZ/3 & r \equiv 0 \,\, \mathrm{mod} \,\, 3\\
0 & \text{else}
\end{cases}.$$
\end{thm}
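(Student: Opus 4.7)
The strategy is to invoke the $r$-Spin Madsen--Weiss theorem underlying Theorem \ref{thm:MumfordConjecture}, which identifies $H_k(\gM_g^{1/r}[\epsilon];\bZ)$ with the corresponding homology of a component of $\Omega^\infty \MTtheta$ in the range $5k \leq 2g-7$. Here $\theta \co B\Spin^r(2) \to BSO(2)$ classifies $r$-Spin structures on oriented $2$-planes: since $B\Spin^r(2) \simeq \bC\bP^\infty$ and $\theta$ realises the $r$-th power map on $\bC\bP^\infty = BSO(2)$, the spectrum $\MTtheta$ is the Thom spectrum of the virtual bundle $-L^{\otimes r}$ over $\bC\bP^\infty$, with cells in dimensions $-2, 0, 2, 4, \ldots$. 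The thresholds $g \geq 6$ and $g \geq 9$ are precisely those for which $k=1$ and $k=2$ fall into this stable range, so it suffices to compute $\pi_1 \MTtheta$ and $H_2(\Omega^\infty_0 \MTtheta;\bQ)$, together with the relevant universal-coefficient extension.

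For $H_1$, I use that $H_1(\Omega^\infty_0 E;\bZ) = \pi_1 E$ for any connective spectrum, so the problem reduces to computing $\pi_1 \MTtheta$. Filtering by the cellular (Atiyah--Hirzebruch) spectral sequence, only the cell in dimension $-2$ (contributing $\pi_3^s = \bZ/24$) and the cell in dimension $0$ (contributing $\pi_1^s = \bZ/2$) can feed into $\pi_1 \MTtheta$, giving at most $48$ elements. Differentials and extensions are controlled by the Euler class $e(L^{\otimes r}) = r\cdot e(L)$ and by the attaching map $r\eta \in \pi_{-1}(S^{-2}) = \bZ/2\{\eta\}$ between the bottom two cells. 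A careful prime-by-prime analysis then yields the stated dependence: the $2$-primary part is zero when $r$ is odd (the $\eta$-attachment collapses $\pi_3^s$ entirely), is $\bZ/4$ when $r \equiv 2 \pmod 4$, and $\bZ/8$ when $r \equiv 0 \pmod 4$, while the $3$-primary summand $\bZ/3 \subset \pi_3^s$ survives untouched, contributing $\bZ/3$ precisely when $3 \divides r$.

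For $H^2$, the rational Mumford conjecture (Theorem \ref{thm:MumfordConjecture}) gives $H_2(\gM_g^{1/r}[\epsilon];\bQ) = \bQ$, spanned by the class dual to $\kappa_1$, hence rank one. Combining this with the $H_1$ computation, the universal coefficient sequence
$$0 \lra \mathrm{Ext}(H_1,\bZ) \lra H^2(\gM_g^{1/r}[\epsilon];\bZ) \lra \mathrm{Hom}(H_2,\bZ) \lra 0$$
yields a free $\bZ$-summand together with the torsion summand $\mathrm{Ext}(H_1,\bZ) \cong H_1$. The extension splits canonically because the integral class $\kappa_1 \in H^2(\gM_g;\bZ)$ pulls back to provide a distinguished free generator.

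The main obstacle will be the careful stable-homotopy analysis of $\pi_1 \MTtheta$: one must not merely identify the $E_2$-page of the Atiyah--Hirzebruch spectral sequence, but also resolve the $\eta$-extensions connecting $\pi_0^s$- and $\pi_1^s$-filtration pieces across successive cells of $\bC\bP^\infty$, whose outcome depends sensitively on the $2$-adic valuation of $r$. In practice one may reduce to a low skeleton $\bC\bP^k_{-r}$ for small $k$ and identify it with a more tractable spectrum (e.g.\ a Moore spectrum, or a small piece of $\mathrm{bo}$), but this delicate extension analysis is the real heart of the computation.
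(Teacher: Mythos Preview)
Your overall framework---reduce via the Madsen--Weiss theorem to $\pi_1(\MT{Spin^r}{2})$, then argue prime-by-prime---is exactly the paper's. The paper, however, computes $\pi_1$ using the \emph{Adams} spectral sequence rather than the Atiyah--Hirzebruch/cell filtration you propose. Either tool can in principle work, but your sketch of the AHSS computation contains a concrete error, and the part you flag as ``the real heart of the computation'' is genuinely where the difficulty lies; the paper's resolution of it is not a routine extension check.

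The error: you assert that for $r$ odd ``the $\eta$-attachment collapses $\pi_3^s$ entirely.'' It does not. The bottom two cells form a cofibre sequence $S^{-2}\to C\to S^0$ with connecting map $r\eta$, and for $r$ odd the long exact sequence gives $\pi_1(C)\cong\bZ/12$, not $0$: the map $\pi_2^s\xrightarrow{\eta}\pi_3^s$ has image $\langle\eta^3\rangle=\langle 12\nu\rangle$, so only the order-$2$ subgroup of $\bZ/24$ is killed. Similarly, the $3$-primary $\bZ/3$ is untouched by $\eta$ regardless of $r$; whether it survives is governed not by the $\eta$-attachment but by the next cell (equivalently, a $d_4$ from $E_2^{2,0}$), and it is this differential whose behaviour depends on $3\mid r$. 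So the mechanism you describe for both the $2$- and $3$-primary parts is misattributed.

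On the genuinely hard step: distinguishing $\bZ/4$ from $\bZ/8$ when $r$ is even is, as you say, delicate. In the paper this shows up as a possible Adams differential into the total degree~$1$ column, and it is resolved not by a local argument but by introducing the cofibre $\X_r$ of $\MT{Spin^r}{2}\to\MT{SO}{2}$, computing its Adams $E_2$-page (which differs according to $r\bmod 4$), and playing the long exact sequence in homotopy against the known $\pi_*(\MT{SO}{2})$ to force the answer. An AHSS version of this argument would need a comparable auxiliary input; reducing to a low skeleton as you suggest will still leave you with the same extension ambiguity.

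A minor point on the $H^2$ part: the universal-coefficient sequence splits simply because $\mathrm{Hom}(H_2,\bZ)$ is free; you do not need $\kappa_1$ to split it, and in fact $\kappa_1$ is highly divisible in $H^2(\gM_g^{1/r}[\epsilon];\bZ)$ (by $r^2 U_r$ in the torsion-free quotient), so it does not furnish a generator.
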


\subsection{Identification of classes in $H^2(\gM_g^{1/r}[\epsilon];\bZ)$}
The description of the second cohomology of $\gM_g^{1/r}[\epsilon]$ in Theorem \ref{thm:LowDimHomology}, while interesting in itself, is of limited use if we do not have firm control on individual elements in this group, and so we now turn to describing elements of this group.

The group $H^2(\gM_g;\bZ)$ is free of rank one (for $g \geq 3$), and contains the two natural classes
$$\kappa_1 := \pi_!(c_1(T_v)^2) \quad \quad\quad \lambda := c_1(\pi^K_!(T_v^*))$$
where the symbol $\pi_!^K$ in the definition of the Hodge class $\lambda$ is the complex $K$-theory pushforward. These are related by the equation $\kappa_1 = 12 \lambda$, and $\lambda$ generates $H^2(\gM_g;\bZ)$.

The universal family $\pi : \gC_g^{1/r} \to \gM_g^{1/r}$ has a vertical tangent bundle $T_v$, and also another complex line bundle $L$, equipped with an isomorphism $L^{\otimes r} \cong T_v$. Thus we may define classes
\begin{equation}\label{eq:TautClasses}
\kappa_1^{a/r} := \pi_!(c_1(L^{\otimes a})^2) \quad \quad\quad \lambda^{-a/r} := c_1(\pi_!^K(L^{\otimes a})).
\end{equation}
These satisfy $\kappa_1^{r/r} = \kappa_1$ and $\lambda^{r/r} = \lambda$. Furthermore, if $r \divides r'$ then the natural map $[r'/r] : \gM_g^{1/r'} \to \gM_g^{1/r}$ pulls back $\kappa_1^{a/r}$ to $\kappa_1^{(a(r'/r))/r'}$, and similarly for the $\lambda^{-a/r}$, so there is no ambiguity in writing the superscripts as rational numbers.

When $r$ is even there is a further class we may define. In this case, an $r$-Spin structure has an underlying 2-Spin structure, and so the map $\gC_g^{1/r} \to \gM_g^{1/r}$ is oriented in real $K$-theory. Thus we may define a class
$$\xi := \pi_!^{KO}(1) \in KO^{-2}(\gM_g^{1/r}),$$
that is, a map $\xi: \gM_g^{1/r} \to O/U$. This is represented by a (virtual dimension zero) complex vector bundle with a trivialisation of the underlying real bundle, and such bundles have canonical choices of half the first Chern class so we may define the element $\tfrac{c_1}{2}(\xi)$ in the integral cohomology of $\gM_g^{1/r}$. The Spin structure on $T_v$ used to form the pushforward is $L^{\otimes \tfrac{r}{2}}$, so the underlying complex vector bundle of $\xi$ is $\pi_!^K(L^{\otimes \frac{r}{2}})$ and $c_1(\xi) = \lambda^{-1/2}$. Using this we define
$$\mu := \tfrac{c_1}{2}(\xi) + 6\lambda^{1/2},$$
which has the property $2\mu = \lambda^{-1/2} + 12\lambda^{1/2}$. The reason for taking this class and not simply $\tfrac{c_1}{2}(\xi)$ is that certain formul{\ae} occurring later on will be clearer.

Our first result concerns the divisibility of these classes in the torsion-free quotient of $H^2(\gM_g^{1/r}[\epsilon];\bZ)$, which by Theorem \ref{thm:LowDimHomology} is a free abelian group of rank one. In the following, we fix a generator $g$ of the torsion-free quotient so that the Hodge class $\lambda$ is a \emph{positive} multiple of $g$. We say a class $x$ is divisible by precisely $D$ when $x = Dg$.

\begin{thm}\label{thm:TorsionFreeDivisibility}
The group $H^2(\gM_g;\bZ)$ injects into $H^2(\gM_g^{1/r}[\epsilon];\bZ)$. In the torsion-free quotient of $H^2(\gM_g^{1/r}[\epsilon];\bZ)$, the Hodge class $\lambda$ is divisible by precisely $\tfrac{U_rr^2}{12}$, where
$$U_r = \begin{cases}
2 & 12 \divides r\\
4 & 4 \ndivides r, 3 \divides r\\
6 & 4 \divides r, 3 \ndivides r\\
12 & 4 \ndivides r, 3 \ndivides r.
\end{cases}$$
More generally,  $\kappa_1^{a/r}$ is divisible by precisely $a^2 U_r$, and $\lambda^{a/r}$ is divisible by precisely $\frac{U_r}{12}(r^2 - 6ar+6a^2)$. When $\mu$ is defined it is divisible by precisely $-\frac{U_r r^2}{48}$.
\end{thm}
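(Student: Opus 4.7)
The plan reduces all claims to computing the single integer $D$ defined by $\lambda = D g$ in the torsion-free quotient of $H^2(\gM_g^{1/r}[\epsilon]; \bZ) \cong \bZ \oplus T$, the $\bZ$-summand coming from Theorem \ref{thm:LowDimHomology}. First, I would use Grothendieck--Riemann--Roch applied to $\pi \co \gC_g^{1/r} \to \gM_g^{1/r}$ with the rational identity $c_1(L) = c_1(T_v)/r$; extracting the degree-$2$ component of $\pi_*(\mathrm{ch}(L^{\otimes a}) \cdot \mathrm{Td}(T_v))$ gives
\[
\kappa_1^{a/r} = \frac{12 a^2}{r^2}\,\lambda, \quad \lambda^{a/r} = \frac{r^2 - 6ar + 6a^2}{r^2}\,\lambda,
\]
and, when $r$ is even, $\mu = -\lambda/4$ (via $c_1(\xi) = \lambda^{-1/2}$ and the relation $2\mu = \lambda^{-1/2} + 12\lambda^{1/2}$). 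Substituting $D = U_r r^2/12$ into these formul{\ae} recovers all the stated divisibilities. The injectivity $H^2(\gM_g;\bZ) \hookrightarrow H^2(\gM_g^{1/r}[\epsilon];\bZ)$ then follows since $H^2(\gM_g;\bZ)$ is generated by $\lambda$, whose image in the torsion-free quotient is nonzero.

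Next, I would establish the lower bound $D \geq U_r r^2/12$ by exhibiting, case-by-case on $U_r$ (split according to whether $4 \mid r$ and $3 \mid r$), an explicit integer $\bZ$-linear combination of $\{\kappa_1^{a/r}, \lambda^{a/r}, \mu\}$ whose rational value is $\frac{12}{U_r r^2}\lambda$. This is an elementary $\gcd$ computation: the subgroup of $\bZ$ spanned by $\{12 a^2,\ r^2 - 6ar + 6a^2 : a \in \bZ\}$, together with $r^2/4$ when $r$ is even, equals $(12/U_r)\bZ$; scaling by $1/r^2$ then produces an integer class with rational coefficient $\frac{12}{U_r r^2}$ times $\lambda$.

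The main obstacle is the reverse inequality $D \leq U_r r^2/12$: no integer class in $H^2$ is more divisible than the one constructed above. By Theorem \ref{thm:MumfordConjecture}, the map $\gM_g^{1/r}[\epsilon] \to \gM_g$ is a $\bZ[1/r]$-homology equivalence in the stable range, so at any prime $p \nmid r$ the class $\lambda$ remains primitive in the torsion-free quotient of $H^2(\gM_g^{1/r}[\epsilon]; \bZ_{(p)})$. At each prime $p \mid r$, my plan is to invoke the Madsen--Weiss-type theorem for $r$-Spin surfaces underlying Theorem \ref{thm:MumfordConjecture} to identify the $p$-local stable $H^2$ of the moduli with the $p$-local $H^2$ of the appropriate component of $\Omega^\infty MT\theta_r$, and then to compute the torsion-free part of $H^2$ of this infinite loop space via the Thom isomorphism and Atiyah--Hirzebruch spectral sequence for $MT\theta_r$. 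The primitive integer generator produced in this way should match the class constructed in the lower-bound step, closing the gap and giving $D = U_r r^2/12$.
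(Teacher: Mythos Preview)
Your plan is sound, and the Grothendieck--Riemann--Roch step matches the paper exactly (it appears there as equation \eqref{eq:ProportionalityLambdaClasses}). The organisational difference is that the paper does not split the divisibility of $\lambda$ into separate lower and upper bounds. Instead it works with the cofibre sequence of spectra
\[
\MT{Spin^r}{2} \lra \MT{SO}{2} \lra \gX_r
\]
and the associated fibration of infinite loop spaces. The Serre spectral sequence gives a short exact sequence on $H^2$, and comparing the torsion part of $H^2(\Omega^\infty_0\MT{Spin^r}{2};\bZ)$ with $H^2(\Omega^{\infty+1}_0\gX_r;\bZ)$ via Pontrjagin duality and the long exact sequence on $\pi_*$ shows directly that $H^2(\Omega^\infty_0\MT{SO}{2};\bZ) \to H^2(\Omega^\infty_0\MT{Spin^r}{2};\bZ)/\text{torsion}$ has cokernel $\bZ/(r^2U_r/12)$. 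The single hard input is the Proposition (established earlier in the paper from the Adams spectral sequence calculations of $\pi_0$ and $\pi_1$, then read off the Atiyah--Hirzebruch edge homomorphism) that $\pi_2(\MT{Spin^r}{2}) \to \pi_2(\MT{SO}{2})$ is multiplication by $r^2U_r/12$.

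This packaging buys the \emph{exact} divisibility of $\lambda$ in one stroke, making your separate $\gcd$ argument for the lower bound unnecessary (though it is correct and pleasantly elementary). Your plan for the upper bound---pass to $\Omega^\infty\MT{Spin^r}{2}$ and compute via Thom isomorphism and Atiyah--Hirzebruch---would ultimately converge on the same $\pi_2 \to \pi_2$ calculation, but as stated it is vague about how the torsion-free generator of $H^2$ is actually pinned down; the cofibre $\gX_r$ is what makes this explicit.
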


It is tedious but not difficult to see that for each fixed $r$ all these divisibilities have no common factor, and hence the classes we have defined generate the torsion-free quotient.

\begin{cor}\label{cor:TorsionFreeGeneration}
The elements $\{\lambda^{a/r}, \kappa_1^{a/r}\}$ (and $\mu$ if it is defined) generate the torsion-free quotient of $H^2(\gM_g^{1/r}[\epsilon];\bZ)$.
\end{cor}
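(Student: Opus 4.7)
The plan is to reduce the claim to a short number-theoretic calculation. By Theorem~\ref{thm:LowDimHomology} the torsion-free quotient of $H^2(\gM_g^{1/r}[\epsilon];\bZ)$ is a free abelian group of rank one with chosen generator $g$, so a collection of classes generates it if and only if the greatest common divisor of their divisibilities, in the sense of Theorem~\ref{thm:TorsionFreeDivisibility}, equals $1$. Using the formulas of that theorem, the task becomes to verify that the gcd of the integers $a^2 U_r$ and $\tfrac{U_r}{12}(r^2 - 6ar + 6a^2)$ as $a$ ranges over $\bZ$, together with $-\tfrac{U_r r^2}{48}$ when $r$ is even, equals $1$.

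Setting $a = 1$ in the first family immediately shows that this gcd divides $U_r \in \{2,4,6,12\}$, so it suffices to exhibit, in each case, a further divisibility that is coprime to $U_r$. I would proceed by case analysis on $\gcd(r,12)$, which controls $U_r$. When $r$ is odd, a direct mod-$2$ and mod-$3$ inspection of $r^2 - 6r + 6$ shows that the divisibility $\tfrac{U_r}{12}(r^2-6r+6)$ of $\lambda^{1/r}$ is already coprime to $U_r$, for both values $\gcd(r,12)\in\{1,3\}$. The same argument handles $r \equiv 0 \pmod 4$, where $U_r \in \{2,6\}$ and the divisibility of $\lambda^{1/r}$ is $(r^2-6r+6)/6$ or $(r^2-6r+6)/2$ respectively, each of which is easily checked to be odd and, in the case $U_r = 6$, additionally coprime to $3$.

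The only genuine subtlety is the case $r \equiv 2 \pmod 4$. Here $\tfrac{U_r}{12}(r^2 - 6ar + 6a^2)$ is always even, so the divisibilities of the $\lambda^{a/r}$ and $\kappa_1^{a/r}$ alone retain a common factor of $2$ and the class $\mu$ is genuinely required. Its divisibility $-\tfrac{U_r r^2}{48}$ turns out to be odd in both relevant subcases $\gcd(r,12) \in \{2,6\}$, and combining it with $\lambda^{1/r}$ and $\kappa_1^{1/r}$ removes the remaining common factor of $2$ (and of $3$, when $3 \mid r$). This last step is the cohomological shadow of the extra factor appearing in the $r \equiv 2 \pmod 4$ line of Theorem~\ref{thm:LowDimHomology}, and recognising that $\mu$ is needed precisely here is the only real content of the proof; the remaining checks amount to a small mod-$12$ table.
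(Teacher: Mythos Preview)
Your approach is exactly the one the paper has in mind: the paper simply asserts that ``it is tedious but not difficult to see that for each fixed $r$ all these divisibilities have no common factor,'' and you have carried out that check. The reduction to showing the gcd of the divisibilities from Theorem~\ref{thm:TorsionFreeDivisibility} equals $1$, and the case split on $\gcd(r,12)$, are the natural way to do this.

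One small slip: in the case $r \equiv 2 \pmod 4$, your parenthetical ``(and of $3$, when $3 \mid r$)'' is backwards. When $3 \mid r$ one has $U_r = 4$, so there is no factor of $3$ to remove; it is when $3 \nmid r$ that $U_r = 12$ and one must also check coprimality to $3$. In fact in both subcases the divisibility of $\mu$ alone is already coprime to $U_r$: for $\gcd(r,12)=2$ one gets $-r^2/4$, which is odd and $\equiv 1 \pmod 3$; for $\gcd(r,12)=6$ one gets $-r^2/12$, which is odd, and that suffices since $U_r=4$. So no combination with $\lambda^{1/r}$ or $\kappa_1^{1/r}$ is actually needed in this case, though of course it does no harm.
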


Having understood the divisibilities of the classes in the torsion-free quotient, it is easy to produce torsion classes as linear combinations of the $\kappa_1^{a/r}$, $\lambda^{a/r}$ and $\mu$ which vanish in the torsion-free quotient. If we define $U_{a/r, b/r} := \gcd(r^2-6ar+6a^2, r^2-6br+6b^2)$ then
$$t^{a/r, b/r} := \frac{r^2-6br+6b^2}{U_{a/r, b/r}} \lambda^{a/r} - \frac{r^2-6ar+6a^2}{U_{a/r, b/r}}\lambda^{b/r}$$
and
$$t^{a/r} := \frac{12}{\gcd(12, r^2-6ar+6a^2)}\lambda^{a/r} - \frac{r^2-6ar+6a^2}{\gcd(12, r^2-6ar+6a^2)} \kappa_1^{1/r}$$
are integral cohomology classes which are trivial in the torsion-free quotient, and hence are torsion. When $r$ is even, there is also the torsion class
$$t := \frac{48}{\gcd(r^2, 48)}\mu + \frac{r^2}{\gcd(r^2,48)}\kappa_1^{1/r}.$$
In order to determine when torsion classes are non-trivial, we prove the following detection theorem.

\begin{thm}\label{thm:TorsionDetection}
For $g \geq 9$ there is a canonical homomorphism
$$\varphi: H^2(\gM_g^{1/r}[\epsilon];\bZ) \lra \bZ/24$$
which is injective when restricted to the torsion subgroup. It sends $\lambda^{a/r}$ to $2$, $\kappa_1^{a/r}$ to $0$, and if $\mu$ is defined it sends it to $1$.
\end{thm}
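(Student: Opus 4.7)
The plan is to construct $\varphi$ as a homotopy-theoretic invariant on the relevant Madsen--Tillmann spectrum, evaluate it on the tautological classes using Grothendieck--Riemann--Roch, and then conclude injectivity on torsion from the bound supplied by Theorem \ref{thm:LowDimHomology}.

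\textbf{Step 1 (Construction).} For $g \geq 9$, the integral refinement of Theorem \ref{thm:MumfordConjecture} that underlies the proof of Theorem \ref{thm:LowDimHomology} gives an isomorphism $H^2(\gM_g^{1/r}[\epsilon]; \bZ) \cong H^2(\Omega^\infty_\bullet \MTtheta; \bZ)$, where $\theta: B\Spin^r(2) \simeq \CPinf \to BSO(2) \simeq \CPinf$ is the $r$-th power map and $\MTtheta = \Th(-L^r)$ for $L$ the universal line bundle. I would then analyse the low-degree homotopy of $\MTtheta$ via the Atiyah--Hirzebruch spectral sequence $H_p(\CPinf; \pi_q S^0) \Rightarrow \pi_{p+q} \MTtheta$ (with the appropriate Thom shift): the bottom cells combined with $\pi_3^s = \bZ/24$ produce a natural $\bZ/24$ summand in, or quotient of, $\pi_1 \MTtheta$. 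Via the universal-coefficients map $\mathrm{Ext}(\pi_1 X, \bZ) \to H^2(\Omega^\infty_0 X; \bZ)$, this yields a canonical $\bZ/24$-valued functional on $H^2$, which I take to be $\varphi$.

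\textbf{Step 2 (Evaluation).} The classes $\lambda^{a/r}$ and $\kappa_1^{a/r}$ are pushforwards of characteristic classes of $L$ along the universal family. A Grothendieck--Riemann--Roch computation gives, up to torsion, $\lambda^{a/r} \equiv \tfrac{r^2 - 6ar + 6a^2}{r^2}\lambda$ and $\kappa_1^{a/r} = a^2\kappa_1$. For $\mu = \tfrac{c_1}{2}(\xi) + 6\lambda^{1/2}$, the real $K$-theory Atiyah--Singer index theorem is required, using the $2$-Spin structure $L^{r/2}$. Translating these formul{\ae} through the spectrum-level description of $\varphi$ from Step 1 yields the stated values $\lambda^{a/r} \mapsto 2$, $\kappa_1^{a/r} \mapsto 0$, $\mu \mapsto 1$. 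As a sanity check, $\kappa_1 = 12\lambda \mapsto 24 \equiv 0$, and $2\mu = \lambda^{-1/2} + 12\lambda^{1/2} \mapsto 2 + 24 \equiv 2$ modulo $24$.

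\textbf{Step 3 (Injectivity on torsion).} By Theorem \ref{thm:LowDimHomology} the torsion subgroup of $H^2(\gM_g^{1/r}[\epsilon]; \bZ)$ has order dividing $24$, and the explicit torsion classes $t^{a/r,b/r}$, $t^{a/r}$, and $t$ constructed just before the statement generate it. Compute their $\varphi$-images by linearity from Step 2 --- for example $\varphi(t^{a/r,b/r}) = 12(a-b)(r-a-b)/U_{a/r,b/r} \bmod 24$ --- and verify, case-by-case over the four congruence classes of $r$ mod $12$, that these images fill a cyclic subgroup of $\bZ/24$ of the same order as the torsion subgroup. Injectivity follows.

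\textbf{Main obstacle.} Step 1 is the crux: pinning down canonically why the modulus is exactly $24$ requires careful spectral-sequence analysis to identify the $d_2$ and $d_3$ differentials and the extension controlling the $\bZ/24$ summand of $\pi_1\MTtheta$. The appearance of $24$ is ultimately geometric --- it reflects the divisibility of the Dirac index for a family of Spin surfaces over a $3$-manifold, equivalently the image of the $J$-homomorphism in the $3$-stem --- but extracting it cleanly in this $r$-Spin context, and verifying compatibility with Step 2, is where the main work lies. The computation of $\varphi(\mu)$ is the secondary delicate point, since it requires tracking the half-integrality of $c_1$ on a virtual Spin bundle through the spectrum-level construction.
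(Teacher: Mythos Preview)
Your overall shape is right---the map $\varphi$ does come from the bottom cell $\gS^{-2} \to \MT{Spin^r}{2}$ and the identification $H^2(\Omega^2_0 QS^0;\bZ) \cong \bZ/24$ via $\pi_3^s$---but Step~1 as written is muddled in a way that matters, and Step~2 does not do what you need.

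In Step~1 you speak of a ``$\bZ/24$ summand in, or quotient of, $\pi_1\MTtheta$''. There is no such thing in general: by Theorem~\ref{thm:LowDimHomology}, $\pi_1\MTtheta$ is $0$, $\bZ/3$, $\bZ/4$, $\bZ/8$, $\bZ/12$, or $\bZ/24$ depending on $r$. What actually happens is that the bottom-cell inclusion $i:\gS^{-2}\to\MTtheta$ induces a map $\bZ/24=\pi_1(\gS^{-2})\to\pi_1(\MTtheta)$, and the crucial lemma is that this map is \emph{surjective}. The paper extracts this from the Adams spectral sequence charts: the entire $1$-column of $\MTtheta$ is reached from Adams filtration~$0$ in degree~$-2$ by $\pi_*(\gS)$-module multiplication. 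Once you have surjectivity on $\pi_1$, Pontrjagin duality gives injectivity of $i^*$ on $H^2_{\mathrm{tors}}$, and that \emph{is} the injectivity statement---Step~3 is then unnecessary. Your proposed Step~3 is not circular, but it is working much harder than needed, and it presupposes that Step~2 has already computed $\varphi$ on the generators.

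In Step~2, Grothendieck--Riemann--Roch only gives rational proportionalities among the $\lambda^{a/r}$; it cannot by itself determine their common image in $\bZ/24$. The paper's argument is more elementary and more direct: on restriction along $i$, every line bundle $L^{\otimes a}$ pulls back to the trivial bundle over a point, so all the $K$-theory pushforwards $i^*\lambda^{a/r}$ coincide with $i^*\lambda$, and all the $i^*\kappa_1^{a/r}$ vanish because $i^*(c_1(L^a)^2\cdot u_{-2})\in H^2(\gS^{-2};\bZ)=0$. What remains is a single nontrivial computation: that $c_1$ of the unit $\Omega^2_0 QS^0 \to \bZ\times BU$ is \emph{twice} a generator of $\bZ/24$. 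This is the content of the paper's Appendix and is where the real work in Step~2 lies; your sanity checks are consistent with it but do not prove it. The value $\varphi(\mu)=1$ then follows formally from $2\mu=\lambda^{-1/2}+12\lambda^{1/2}$, and indeed the paper uses $\mu$ (for $r=2$) to \emph{fix} the isomorphism $H^2(\Omega^2_0 QS^0;\bZ)\cong\bZ/24$.
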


Along with an analysis of the coefficients in the definition of $t^{a/r}$ and $t$, this theorem implies the following.

\begin{cor}\label{cor:TorsionGenerators}
If $r$ is odd, any $t^{a/r}$ generates the torsion subgroup. If $r \equiv 2 \,\,\mathrm{mod}\,\, 4$ then $t^{0/r}$ generates the torsion subgroup. If $r \equiv 0 \,\,\mathrm{mod}\,\, 4$ then $t$ generates the torsion subgroup.
\end{cor}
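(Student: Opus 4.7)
The plan is to apply Theorem \ref{thm:TorsionDetection} as a detection tool. Since $\varphi : H^2(\gM_g^{1/r}[\epsilon];\bZ) \to \bZ/24$ is injective on the torsion subgroup, a torsion class $x$ generates the torsion subgroup precisely when the order of $\varphi(x)$ in $\bZ/24$ equals the order of the torsion subgroup, which by Theorem \ref{thm:LowDimHomology} is $1, 3, 4, 12, 8$ or $24$ depending on the residue of $r$ modulo $12$ (and the parity of $v_2(r)$).

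Substituting $\varphi(\lambda^{a/r}) = 2$, $\varphi(\kappa_1^{1/r}) = 0$, and $\varphi(\mu) = 1$ into the definitions of $t^{a/r}$ and $t$ yields
$$\varphi(t^{a/r}) = \frac{24}{\gcd(12, r^2 - 6ar + 6a^2)} \quad \text{and} \quad \varphi(t) = \frac{48}{\gcd(r^2, 48)} \quad \text{in } \bZ/24.$$
It then remains to verify case by case that these elements have the required orders.

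The key simplification for the $t^{a/r}$ computation is that when $r$ is odd, exactly one of $a$ and $r-a$ is even, so $2 \divides a(r-a)$ and $12 \divides 6a(r-a)$; hence $\gcd(12, r^2 - 6ar + 6a^2) = \gcd(12, r^2)$ independently of $a$. Tabulating $\gcd(12, r^2)$ and $\gcd(48, r^2)$ through the $2$- and $3$-adic valuations of $r$ then gives, for $r$ odd: $\varphi(t^{a/r}) = 0$ or $8$ according to whether $3 \ndivides r$ or $3 \divides r$; for $r \equiv 2 \,\mathrm{mod}\, 4$: $\varphi(t^{0/r}) = 6$ or $2$; and for $r \equiv 0 \,\mathrm{mod}\, 4$: $\varphi(t) = 3$ or $1$. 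In each of the six cases the resulting order in $\bZ/24$ matches the order of the torsion subgroup, completing the proof.

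There is no genuine obstacle here: once Theorems \ref{thm:LowDimHomology} and \ref{thm:TorsionDetection} are in hand, the corollary reduces to an elementary arithmetic verification, and the only mildly non-obvious point is the parity observation that trivialises the dependence of $\gcd(12, r^2-6ar+6a^2)$ on $a$ when $r$ is odd.
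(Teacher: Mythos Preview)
Your proof is correct and follows essentially the same approach as the paper: both apply the detection map $\varphi$ of Theorem \ref{thm:TorsionDetection}, compute $\varphi(t^{a/r}) = 24/\gcd(12, r^2-6ar+6a^2)$ and $\varphi(t) = 48/\gcd(48, r^2)$, and then verify case by case that the resulting element of $\bZ/24$ has order equal to that of the torsion subgroup from Theorem \ref{thm:LowDimHomology}. Your parity observation showing $\gcd(12, r^2-6ar+6a^2) = \gcd(12, r^2)$ for odd $r$ is a small clarification of a step the paper simply asserts.
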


Combining Corollaries \ref{cor:TorsionFreeGeneration} and \ref{cor:TorsionGenerators}, we can make the following observation. 

\begin{cor}
The elements $\{\lambda^{a/r}, \kappa_1^{a/r}\}$ (and $\mu$ if it is defined) generate the group $H^2(\gM_g^{1/r}[\epsilon];\bZ)$. All relations between them are implied by Theorem \ref{thm:TorsionFreeDivisibility} and Theorem \ref{thm:TorsionDetection}.
\end{cor}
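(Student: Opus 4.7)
The plan is to derive this corollary as a direct consequence of Corollaries \ref{cor:TorsionFreeGeneration} and \ref{cor:TorsionGenerators} together with the injectivity statement in Theorem \ref{thm:TorsionDetection}; the hard work has already been done in those earlier results, and the task here is to package them together.

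First I would verify generation. By Theorem \ref{thm:LowDimHomology} the group $H^2(\gM_g^{1/r}[\epsilon];\bZ)$ is (non-canonically) isomorphic to $\bZ \oplus T$ with $T$ a finite abelian group. Corollary \ref{cor:TorsionFreeGeneration} says that the classes $\{\lambda^{a/r}, \kappa_1^{a/r}\}$ (together with $\mu$ when defined) already generate the torsion-free quotient, while Corollary \ref{cor:TorsionGenerators} exhibits explicit integer linear combinations of these same classes which generate $T$. Putting the two statements side by side gives generation of the whole group.

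For the relations claim, I would bundle the two homomorphisms into a single map
$$\Phi := (\mathrm{proj}, \varphi) : H^2(\gM_g^{1/r}[\epsilon];\bZ) \lra \bZ \oplus \bZ/24,$$
where $\mathrm{proj}$ is projection onto the torsion-free quotient and $\varphi$ is the homomorphism from Theorem \ref{thm:TorsionDetection}. This map is injective, because the kernel of $\mathrm{proj}$ is precisely the torsion subgroup, on which $\varphi$ is injective by Theorem \ref{thm:TorsionDetection}. Now Theorem \ref{thm:TorsionFreeDivisibility} computes the first coordinate of $\Phi$ on each of our generators, and Theorem \ref{thm:TorsionDetection} computes the second. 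Hence any integer relation $\sum_i n_i x_i = 0$ among the generators yields, upon applying $\Phi$, the two numerical identities
$$\sum_i n_i \, \mathrm{proj}(x_i) = 0 \in \bZ \quad \text{and} \quad \sum_i n_i \, \varphi(x_i) = 0 \in \bZ/24,$$
which are exactly the consequences of the two theorems. Conversely, any integer combination of generators satisfying both identities has $\Phi$-image zero and so vanishes in $H^2$ by injectivity of $\Phi$; no additional relations are needed.

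The only non-formal ingredient is the injectivity of $\Phi$, but this is built into Theorem \ref{thm:TorsionDetection} together with the torsion computation of Theorem \ref{thm:LowDimHomology} (which ensures that $T$ does indeed sit inside $\bZ/24$ compatibly with the values $2$, $0$, $1$ recorded for $\lambda^{a/r}$, $\kappa_1^{a/r}$, $\mu$). I do not expect any serious obstacle in this proof: beyond invoking Theorem \ref{thm:TorsionDetection} for injectivity of $\varphi$ on torsion, the argument is purely a bookkeeping assembly of the already-established pieces.
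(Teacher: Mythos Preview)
Your proposal is correct and follows exactly the approach the paper indicates: the paper does not give a separate proof but merely says ``Combining Corollaries \ref{cor:TorsionFreeGeneration} and \ref{cor:TorsionGenerators}, we can make the following observation,'' and your argument is precisely the spelling-out of that combination. Your explicit packaging via the injective homomorphism $\Phi = (\mathrm{proj}, \varphi)$ is a clean way to make precise the phrase ``all relations are implied by,'' and is implicit in the paper's formulation.
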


\begin{example}[2-Spin]\label{ex:rEq2}
Consider the classes $\lambda$, $\lambda^{1/2}$, $\mu$, $\kappa_1$ and $\kappa_1^{1/2}$. The number $U_2$ is 12, so in the torsion-free quotient $\lambda$ is divisible by 4, $\lambda^{1/2}$ is divisible by $-2$, $\mu$ is divisible by $-1$, $\kappa_1$ is divisible by $48$, and $\kappa_1^{1/2}$ is divisible by $12$. The class $t^{1/2, 0/2}$ is $2\lambda^{1/2} + \lambda$ and maps to $6 \in \bZ/24$ an order 4 element, so it generates the torsion subgroup. Thus a presentation of the second integral cohomology of $\gM_g^{1/2}[\epsilon]$ in the stable range is
$$\langle \lambda, \mu \,\, \vert \,\, 4(\lambda + 4\mu) \rangle.$$
\end{example}

\begin{example}[3-Spin]\label{ex:rEq3}
Consider the classes $\lambda$, $\lambda^{1/3}$, $\lambda^{2/3}$,  $\kappa_1$ and $\kappa_1^{1/3}$. The number $U_3$ is 4, so in the torsion-free quotient $\lambda$ is divisible by 3, $\lambda^{1/3}$ is divisible by $-1$, $\lambda^{2/3}$ is divisible by $-1$, $\kappa_1$ is divisible by $36$, and $\kappa_1^{1/3}$ is divisible by $4$. The class $t^{1/3, 0/3}$ is $3\lambda^{1/3} + \lambda$ and maps to $8 \in \bZ/24$ an element of order 3, so it generates the torsion subgroup. Thus a presentation of the second integral cohomology of $\gM_g^{1/3}[\epsilon]$ in the stable range is
$$\langle \lambda, \lambda^{1/3} \,\, \vert \,\, 3(\lambda + 3\lambda^{1/3}) \rangle.$$
\end{example}

\begin{example}[4-Spin]\label{ex:rEq4}
Consider the classes $\lambda$, $\lambda^{1/4}$, $\lambda^{1/2}$, $\lambda^{3/4}$, $\mu$ and the $\kappa_1^{a/4}$. The number $U_4$ is 6, so in the torsion-free quotient $\lambda$ is divisible by $8$, $\lambda^{1/4}$ is divisible by $-1$, $\lambda^{1/2}$ is divisible by $-4$, $\lambda^{3/4}$ is divisible by $-1$, and $\mu$ is divisible by $-2$. The class $t^{1/4, 0/4}$ is $8\lambda^{1/4} + \lambda$ and maps to $18 \in \bZ/24$ an element of order 4. The class $t^{1/2, 0/4}$ is $2\lambda^{1/2} + \lambda$ and maps to $6 \in \bZ/24$ an element of order 4. The class $t^{1/4, 1/2}$ is $\lambda^{1/2}-4\lambda^{1/4}$ and maps to $-6 = 18 \in \bZ/24$ an element of order 4. The class $t^{1/4}$ is $6\lambda^{1/4}+\kappa_1^{1/4}$ and maps to $12 \in \bZ/24$ an element of order 2. The class $t^{1/2}$ is $3\lambda^{1/2} + 2\kappa_1^{1/4}$ and maps to $6 \in \bZ/24$ an element of order 4. Finally, the class $t$ is $3\mu + \kappa_1^{1/4}$ and maps to $3 \in \bZ/24$ an element of order 8, so generates the torsion subgroup. Similarly, the class $\mu - 2\lambda^{1/4}$ is torsion and maps to $-3 \in \bZ/24$ so generates the torsion subgroup. Thus a presentation of the second integral cohomology of $\gM_g^{1/4}[\epsilon]$ in the stable range is
$$\langle \mu, \lambda^{1/4} \,\, \vert \,\, 8(\mu -2\lambda^{1/4}) \rangle.$$
\end{example}

\subsection{Low-dimensional homology of the space of $r$-theta-characteristics}\label{sec:HomologyThetaCharacteristics}

The extension (\ref{eq:GroupExt}) and the known abelianisation of $\Gamma_g^{1/r}(\zeta)$ from Theorem \ref{thm:LowDimHomology} imply a calculation of the abelianisation of $G_g^{1/r}(\zeta)$ as long as one can understand the effect of the map $\bZ/r \to \Gamma_g^{1/r}(\zeta)$ on abelianisations. We explain in \S \ref{sec:ThetaChar} how to compute the effect of this map, but the formul{\ae} are complicated and the abelianisation of $G_g^{1/r}(\zeta)$ depends sensitively on $r$, $g$ and $\mathrm{Arf}(\zeta)$, so it is difficult to give a general statement. However for any particular $r$ it is not a difficult calculation.

\begin{example}
The group $H_1(\widetilde{\gM}_g^{1/2}[\epsilon];\bZ)$ is $\bZ/4$ for $g \geq 9$, and the group $H_1(\widetilde{\gM}_g^{1/3};\bZ)$ is $\bZ/3$ for $g \geq 9$. The group $H_1(\widetilde{\gM}_g^{1/4}[\epsilon];\bZ)$ is $\bZ/8$ if $\epsilon = 0$, and is $\bZ/4$ otherwise, assuming that $g \geq 9$.
\end{example}

From the abelianisation of $G_g^{1/r}(\zeta)$ we may deduce the second integral cohomology of $\widetilde{\gM}_g^{1/r}[\epsilon]$ as an abstract group, but the methods of \S \ref{sec:ThetaChar} in fact allow us to compute the effect of the (injective) map
$$H^2(\widetilde{\gM}_g^{1/r}[\epsilon];\bZ) \lra H^2({\gM}_g^{1/r}[\epsilon];\bZ).$$

\begin{example}
As a subgroup of $H^2({\gM}_g^{1/2}[\epsilon];\bZ) = \langle \lambda, \mu \,\, \vert \,\, 4(\lambda + 4\mu) \rangle$, the second cohomology of $\widetilde{\gM}_g^{1/2}[\epsilon]$ is the whole group if $\epsilon=0$ and $\langle \lambda, 2\mu \,\, \vert \,\, 4(\lambda + 4\mu) \rangle$ if $\epsilon=1$.
\end{example}

The first rational cohomology of $G_g^{1/r}(\zeta)$ is stably trivial, and the second rational cohomology stably has rank one, giving further support to the general belief that this is true of all finite-index subgroups of the mapping class group.

\subsection{Picard groups and Neron--Severi groups}

The \emph{topological Picard group} $\mathrm{Pic}_{\top}(X \hcoker G)$ of the orbifold $X \hcoker G$ is the set of isomorphism classes of $G$-equivariant complex line bundles on $X$, which forms an abelian group under tensor product of line bundles. The first Chern class provides a homomorphism
$$c_1 : \mathrm{Pic}_{\top}(X \hcoker G) \lra H^2(X \hcoker G;\bZ)$$
which is in fact an isomorphism \cite[Lemma 5.1]{ERW10}. 

The orbifolds $\widetilde{\gM}_g^{1/r}$ and $\gM_g^{1/r}$ have a complex structure, and so they also have \emph{holomorphic Picard groups}: these are the sets of isomorphism classes of $G_g^{1/r}$- or $\Gamma_g^{1/r}$-equivariant holomorphic line bundles on $\mathcal{T}_g$, under tensor product of holomorphic line bundles. For a holomorphic orbifold $\gX$ there is a map
$$\mathrm{Pic}_{\hol}(\gX) \to \mathrm{Pic}_{\top}(\gX)$$
with kernel $\mathrm{Pic}_{\hol}^0(\gX)$ consisting of the topologically trivial holomorphic line bundles. The \emph{analytic Neron--Severi group} $\mathcal{NS}(\gX)$ is defined to be the quotient group $\mathrm{Pic}_{\hol}(\gX) / \mathrm{Pic}_{\hol}^0(\gX)$, so there is a sequence of maps
$$\mathrm{Pic}_{\hol}(\gX) \lra \mathcal{NS}(\gX) \lra \mathrm{Pic}_{\top}(\gX) \overset{c_1}\lra H^2(\gX;\bZ)$$
where the first map is an epimorphism, the second is a monomorphism and by the discussion above the last is an isomorphism.



The orbifold $\widetilde{\gM}_g^{1/r}[\epsilon]$ has the structure of a \emph{quasi-projective orbifold}, as it has a finite cover by a quasi-projective variety. Thus it has an \emph{algebraic Picard group} $\Pic_{\alg}(\widetilde{\gM}_g^{1/r}[\epsilon])$ consisting of holomorphic line bundles which are algebraic on all finite quasi-projective covers. The orbifold $\gM_g^{1/r}[\epsilon]$ also has an algebraic Picard group, by virtue of being a $\bZ/r$-gerbe over the quasi-projective orbifold $\widetilde{\gM}_g^{1/r}[\epsilon]$; we define it in \S \ref{sec:AlgPicGp}.

\begin{thm}\label{thm:PicardMain}
Consider the maps
$$\mathrm{Pic}_{\alg}(\gM_g^{1/r}[\epsilon]) \lra \mathrm{Pic}_{\hol}(\gM_g^{1/r}[\epsilon]) \lra \mathcal{NS}(\gM_g^{1/r}[\epsilon]) \lra H^2(\gM_g^{1/r}[\epsilon];\bZ).$$
As long as $g \geq 9$, the composition is an isomorphism, the second map is surjective, and the last map is an isomorphism. The same holds for $\widetilde{\gM}_g^{1/r}[\epsilon]$.
\end{thm}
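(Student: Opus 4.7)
The content of the theorem reduces to showing that the composition $c_1^{\alg}: \mathrm{Pic}_{\alg}(\gM_g^{1/r}[\epsilon]) \to H^2(\gM_g^{1/r}[\epsilon];\bZ)$ is an isomorphism. The second map $\mathrm{Pic}_{\hol} \to \mathcal{NS}$ is surjective by definition of $\mathcal{NS}$ as a quotient; and given that $c_1^{\alg}$ is bijective the last map $\mathcal{NS} \to H^2$ is then bijective too, since $\mathcal{NS} \hookrightarrow \mathrm{Pic}_{\top} \cong H^2$ (cited before the theorem) contains the image of $c_1^{\alg}$. So I have two tasks: surjectivity of $c_1^{\alg}$ (every cohomology class is represented by an algebraic line bundle), and injectivity ($\mathrm{Pic}_{\alg}^0 = 0$).

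\textbf{Surjectivity of $c_1^{\alg}$.} Combining Corollaries \ref{cor:TorsionFreeGeneration} and \ref{cor:TorsionGenerators}, the group $H^2(\gM_g^{1/r}[\epsilon];\bZ)$ is generated by the classes $\lambda^{a/r}$, $\kappa_1^{a/r}$, and, when $r$ is even, $\mu$. The $\lambda^{a/r}$ are by definition the Chern classes of the algebraic determinant-of-cohomology line bundles $\det R\pi_*(L^{\otimes a})$. The $\kappa_1^{a/r}$ may be realized as Chern classes of the Deligne pairings $\langle L^{\otimes a}, L^{\otimes a} \otimes \omega_\pi^{-1} \rangle$, or equivalently as integer combinations of the $\lambda^{\bullet/r}$-classes via Grothendieck--Riemann--Roch; these are algebraic. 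For even $r$ the bundle $L^{\otimes r/2}$ is a relative theta characteristic, and the algebraic Pfaffian line bundle $\mathrm{Pf}(R\pi_* L^{\otimes r/2})$ is a canonical square root of $\det R\pi_*(L^{\otimes r/2})$ with first Chern class $\tfrac{1}{2}\lambda^{-1/2}$; adding the algebraic class $6\lambda^{1/2}$ realizes $\mu$ algebraically.

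\textbf{Injectivity of $c_1^{\alg}$.} I need to show $\mathrm{Pic}^0_{\alg}(\gM_g^{1/r}[\epsilon]) = 0$. The analogous vanishing $\mathrm{Pic}^0_{\alg}(\gM_g) = 0$ is classical (Arbarello--Cornalba). Since $\widetilde{\gM}_g^{1/r}[\epsilon] \to \gM_g$ is a finite orbifold cover and $H^1(\widetilde{\gM}_g^{1/r}[\epsilon];\bQ) = 0$ by Theorem \ref{thm:MumfordConjecture}, a standard norm argument shows that any topologically trivial algebraic line bundle on the cover pushes to a trivial one on $\gM_g$ and is therefore trivial on the cover. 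Passing further along the $\bZ/r$-gerbe $\gM_g^{1/r}[\epsilon] \to \widetilde{\gM}_g^{1/r}[\epsilon]$ introduces no new algebraically trivial line bundles, since such a bundle must restrict trivially on each fibre $\mathrm{B}\bZ/r$ and hence descends to the base.

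\textbf{Main obstacle.} The principal difficulty is the algebraic realization of $\mu$ when $r$ is even: one must construct the Pfaffian line bundle algebraically and identify its Chern class with $\mu$ rather than with one of the other square roots of $\lambda^{-1/2} + 12\lambda^{1/2}$ differing by $2$-torsion. Once this is in hand, the remainder is a combination of explicit tautological constructions with classical Picard-group facts for the moduli space of curves.
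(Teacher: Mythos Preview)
Your overall structure---reducing to injectivity and surjectivity of $c_1^{\alg}$---matches the paper. For injectivity the paper does not use a norm argument but directly invokes the fact (citing Hain) that $\Pic^0_{\alg}(\gX)=0$ whenever $H^1(\gX;\bZ)=0$; this applies immediately to $\widetilde{\gM}_g^{1/r}[\epsilon]$ by Theorem~\ref{thm:LowDimHomology}, and then a Leray spectral sequence for the $\bZ/r$-gerbe extends it to $\gM_g^{1/r}[\epsilon]$. Your norm sketch is vaguer than this and would need more work to make precise.

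The real divergence is in realising $\mu$ algebraically. You propose the Pfaffian line bundle of $R\pi_*L^{\otimes r/2}$. The paper instead reduces to $r=2$ and treats the two Arf components separately: for $\epsilon=0$ it uses the theta-null divisor $\Theta_{\mathrm{null}}$ together with Farkas' computation $[\Theta_{\mathrm{null}}]=-\tfrac{1}{4}\lambda$; for $\epsilon=1$ it uses that $\pi_*(L^*)$ is a line bundle off a codimension-$3$ locus (Teixidor), giving an algebraic class with $2c_1(\pi_*(L^*))=\lambda^{-1/2}$. The paper's $\epsilon=1$ argument is in spirit a concrete Pfaffian, so your approach is a uniform version of what the paper does in that case; what you gain is not needing two separate divisor constructions, what the paper gains is avoiding any appeal to a general algebraic Pfaffian.

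Your ``main obstacle'' is less serious than you fear. The paper's $\epsilon=1$ argument has the same ambiguity---it only pins down the class modulo torsion---and resolves it by observing that the torsion subgroup is already generated by the $\lambda$-classes (e.g.\ $\lambda+4\mu=\lambda+2\lambda^{-1/2}+24\lambda^{1/2}$). More generally, one checks via the detection map $\varphi$ of Theorem~\ref{thm:TorsionDetection} that the $2$-torsion of $H^2(\gM_g^{1/r}[\epsilon];\bZ)$ always lies in the subgroup generated by the $\lambda^{a/r}$ and $\kappa_1^{a/r}$: for $r\equiv 2\bmod 4$ the whole $\bZ/4$ is generated by $t^{0/r}$, and for $r\equiv 0\bmod 4$ one has $2t=t^{0/r}$ (both map to the same element under the injective $\varphi$), so $4t$ lies in the $\lambda$-subgroup. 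Hence \emph{any} algebraic square root of $\lambda^{-1/2}$ suffices, and you do not need to identify the Pfaffian's Chern class on the nose.
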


Hence Theorem \ref{thm:LowDimHomology} computes both the topological and algebraic Picard groups and the analytic Neron--Severi group of $\gM_g^{1/r}$, and the methods of \S \ref{sec:ThetaChar} do the same for $\widetilde{\gM}_g^{1/r}$.

\subsection{Relation to the work of Jarvis}

In the algebro-geometric setting, Jarvis \cite{JarvisGeom} has constructed $\overline{\mathfrak{S}}_g^{1/r}$, a smooth proper Deligne--Mumford stack over $\bZ[1/r]$ compactifying the stack $\mathfrak{S}_g^{1/r}$ which parametrises families of $r$-Spin curves (which is a curve $C$ with a line bundle $L$ and an isomorphism $\varphi: L^{\otimes r} \cong \omega$). For the purposes of this discussion, we assume that the quasi-projective orbifold $\gM_g^{1/r}$ represents the smooth Deligne--Mumford stack $\mathfrak{S}_g^{1/r} \otimes_{\bZ[1/r]} \bC$, and in particular has the same cohomological invariants. Jarvis has proved an algebraic analogue of Lemma 1.1 \cite[\S 3.3]{JarvisGeom}, showing that the coarse moduli space of $\mathfrak{S}_g^{1/r}$ has either one or two irreducible components depending on the parity of $r$.

Using his compactification, Jarvis has studied \cite{Jarvis} the Picard group of the stack $\mathfrak{S}_g^{1/r}$, and in particular produced algebraic line bundles of orders 4 and 3 when $r$ is divisible by 2 and 3 respectively \cite[Proposition 3.14]{Jarvis}. These are constructed in the same way as our classes $t^{a/r, b/r}$, insofar as that they are linear combinations of the $\lambda^{a/r}$ which are rationally trivial. Conjecture 4.4 of \cite{Jarvis} (also mentioned by Cornalba \cite[p. 31]{Cornalba}, \cite{Cornalba2}) is the presentation
$$\langle \lambda, \lambda^{1/2} \,\, | \,\, 4\lambda + 8\lambda^{1/2} \rangle$$
for the algebraic Picard group of $\gM_g^{1/2}[\epsilon]$, based on the following observations: $\lambda + 2\lambda^{1/2}$ is an element of order 4, $\lambda$ and $\lambda^{1/2}$ are elements of infinite order, and the Picard group is known to be $\bZ \oplus \bZ/4$ as an abstract group. However, our Example \ref{ex:rEq2} shows that in the torsion-free quotient of cohomology $\lambda$ is divisible by $4$ and $\lambda^{1/2}$ is divisible by $-2$. Furthermore, our Theorem \ref{thm:PicardMain} shows that
$$\Pic_{\alg}(\gM_g^{1/2}[\epsilon]) \lra H^2(\gM_g^{1/2}[\epsilon];\bZ)$$
is an isomorphism for $g \geq 9$. Hence the conjecture is incorrect, and the correct presentation is that of Example \ref{ex:rEq2}, where we interpret $\mu$ as a class constructed out of some square root of the algebraic line bundle $\lambda^{-1/2}$.

\section{Madsen--Weiss theory for $r$-Spin Riemann surfaces}

The main tool that allows us to say anything about the orbifolds $\gM_g^{1/r}$ is the development of a theory parallel to that of Madsen and Weiss \cite{MW} for $\gM_g$. Thus we have a good model for the homotopy type of the orbifold $\gM_g^{1/r}$, a homological stability theorem for these spaces, and an identification of the stable homology with the homology of a certain infinite loop space. The following outline of the theory is rather brief, and we suggest that the reader also consult \cite{MW, GMTW} for the analogous theory for $\gM_g$.

\subsection{A homotopical model for $\gM_g^{1/r}$}

Let $\gamma^r \to B\Spin^r(2)$ denote the universal $r$-Spin complex line bundle. Thus $B\Spin^r(2) \simeq BU(1)$ and the complex line bundle $\gamma^r$ is $r$ times the canonical bundle over $BU(1)$. Let $\Sigma_g$ be a closed oriented surface, and $\Bun(T\Sigma_g, \gamma^r)$ denote the space of bundle maps $T\Sigma_g \to \gamma^r$, i.e.\ fibrewise linear isomorphisms. We define the homotopy-theoretic moduli space to be the Borel construction
$$\mathcal{M}^{1/r}(\Sigma_g) := \Bun(T\Sigma_g, \gamma^r) \times_{\Diff^+(\Sigma_g)} \Emb(\Sigma_g, \bR^\infty).$$
By construction, homotopy classes of maps from a manifold into $\mathcal{M}^{1/r}(\Sigma_g)$ classify concordance classes of surface bundles with genus $g$ fibres, equipped with a complex line bundle $L$ on the total space and an isomorphism from $L^{\otimes r}$ to the vertical tangent bundle. There is a natural map $\mathcal{M}^{1/r}(\Sigma_g) \to \gM_g^{1/r}$ classifying the universal family, and we have immediately that

\begin{prop}
$\mathcal{M}^{1/r}(\Sigma_g)$ is the weak homotopy type of the orbifold $\gM_g^{1/r}$. In particular, they have isomorphic integral cohomology.
\end{prop}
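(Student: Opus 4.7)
The plan is to identify $\mathcal{M}^{1/r}(\Sigma_g)$ as a two-stage Borel construction, strip off the connected component of the diffeomorphism group using Earle--Eells, and then match each path component of the resulting discrete homotopy quotient against the classifying space of some $\Gamma_g^{1/r}(\zeta)$.

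First, I would use that $\Emb(\Sigma_g, \bR^\infty)$ is weakly contractible with free $\Diff^+(\Sigma_g)$-action, so it serves as a model for $E\Diff^+(\Sigma_g)$ and
$$\mathcal{M}^{1/r}(\Sigma_g) \simeq \Bun(T\Sigma_g, \gamma^r) /\!/ \Diff^+(\Sigma_g).$$
By the Earle--Eells theorem $\Diff^+_0(\Sigma_g)$ is contractible for $g\geq 2$, so acts on $\Bun(T\Sigma_g,\gamma^r)$ through a contractible topological group; replacing it by a point gives
$$\mathcal{M}^{1/r}(\Sigma_g)\;\simeq\;\Bun(T\Sigma_g, \gamma^r) /\!/ \Gamma_g,$$
where now $\Gamma_g$ is discrete. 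It then suffices to treat each $\Gamma_g$-orbit of components of $\Bun(T\Sigma_g,\gamma^r)$ separately: for the orbit of the component $C_\zeta$ containing $\zeta$, with setwise stabiliser $G_g^{1/r}(\zeta)\subset\Gamma_g$, one has
$$(\Gamma_g\cdot C_\zeta)/\!/\Gamma_g\;\simeq\;C_\zeta /\!/ G_g^{1/r}(\zeta).$$

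Second, I would analyse the homotopy type of $C_\zeta$. Since $B\Spin^r(2)\to BSO(2)$ is (up to equivalence) the multiplication-by-$r$ map $K(\bZ,2)\to K(\bZ,2)$, with fibre $K(\bZ/r,1)$, standard obstruction theory identifies each path component of the space of lifts of the tangent map $\tau\co \Sigma_g\to BSO(2)$ with a connected space whose only non-trivial homotopy group is $\pi_1\cong H^1(\Sigma_g;\bZ/r)_{\text{based}}=\Aut(\zeta)=\bZ/r$; equivalently, $C_\zeta$ is the classifying space of the one-object groupoid with automorphism group $\bZ/r$, so $C_\zeta\simeq B\bZ/r$.

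Third, I would identify the homotopy action of $G_g^{1/r}(\zeta)$ on $C_\zeta\simeq B\bZ/r$ with the group extension (1.1). The naturally occurring topological groupoid here has objects the pairs $(\zeta,\text{iso class})$ with morphisms pairs $(f,\rho)$ with $\rho\co f^*\zeta\cong\zeta$; this is precisely the groupoid whose path components form the group $\Gamma_g^{1/r}(\zeta)=\pi_0\Diff(\Sigma_g,\zeta)$ from the introduction, and the forgetful map $(f,\rho)\mapsto f$ together with the $\bZ/r$ of gauge transformations of a fixed $\zeta$ gives the central extension (1.1). Consequently
$$C_\zeta /\!/ G_g^{1/r}(\zeta)\;\simeq\;B\Gamma_g^{1/r}(\zeta)\;\simeq\;\mathcal T_g\hcoker\Gamma_g^{1/r}(\zeta)\;=\;\gM_g^{1/r}(\zeta),$$
the last weak equivalence using contractibility of $\mathcal T_g$. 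Summing over $\Gamma_g$-orbits on $\pi_0\Bun(T\Sigma_g,\gamma^r)$, and invoking Lemma 1.1 to match these orbits with the labels $[\epsilon]$, yields the claimed weak homotopy equivalence $\mathcal M^{1/r}(\Sigma_g)\simeq\gM_g^{1/r}$.

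The step I expect to be the main obstacle is the precise identification in the third paragraph: one must verify that the $2$-cocycle describing the homotopy action of $G_g^{1/r}(\zeta)$ on $B\bZ/r$ (arising from the non-canonical choice of isomorphism $f^*\zeta\cong\zeta$ for each $f$) is exactly the cocycle classifying the extension (1.1). Once that is in hand, the rest is bookkeeping about fibrations with contractible base or fibre.
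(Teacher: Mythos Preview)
Your argument is correct and is the standard way to make this identification precise. The paper itself offers no proof: it simply remarks that $\mathcal{M}^{1/r}(\Sigma_g)$ classifies the right data, exhibits the natural map to $\gM_g^{1/r}$, and declares the proposition to follow ``immediately'', so there is no independent argument to compare against. Your unpacking via Earle--Eells, the component-by-component analysis of $\Bun(T\Sigma_g,\gamma^r)$, and the identification of the $G_g^{1/r}(\zeta)$-action on $C_\zeta\simeq B\bZ/r$ with the extension cocycle of (1.1) is exactly how one would fill in the details.

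One small slip in your second step: you write $\pi_1(C_\zeta)\cong H^1(\Sigma_g;\bZ/r)_{\mathrm{based}}$, but $H^1(\Sigma_g;\bZ/r)\cong(\bZ/r)^{2g}$ indexes the \emph{components} of the lifting space, not the fundamental group of one component. The correct obstruction-theoretic statement is $\pi_1(C_\zeta)\cong H^0(\Sigma_g;\bZ/r)=\bZ/r$, matching $\Aut(\zeta)=\bZ/r$ as you intended; this does not affect the rest of the argument. The point you flag as the main obstacle---that the $2$-cocycle for the homotopy $G_g^{1/r}(\zeta)$-action on $B\bZ/r$ coincides with the one defining $\Gamma_g^{1/r}(\zeta)$---is indeed the only thing requiring care, and it holds because both cocycles arise from the same ambiguity in choosing an isomorphism $f^*\zeta\cong\zeta$.
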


\subsection{Homological stability}

In \cite[\S 2]{RWFramedPinMCG} we studied the spaces $\mathcal{M}^{1/r}(\Sigma_g)$ and their generalisations $\mathcal{M}^{1/r}(\Sigma_{g, b};\delta)$ to surfaces with boundary, where the $r$-Spin structure is required to satisfy a boundary condition $\delta$. The main theorem of that paper concerning these spaces is that they satisfy the hypotheses of \cite{R-WResolution} and hence exhibit homological stability. The implication of this statement is that

\begin{thm}
The homology of $\mathcal{M}^{1/r}(\Sigma_{g, b};\delta)$ is independent of $g$, $b$ and $\delta$ in degrees $5* \leq 2g-7$. More precisely, all stabilisation maps from this space obtained by gluing on an $r$-Spin surface along boundary components induce a homology isomorphism in this range of degrees.
\end{thm}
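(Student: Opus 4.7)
The plan is to verify the hypotheses of the axiomatic homological stability framework of \cite{R-WResolution}. First I would organise the spaces $\mathcal{M}^{1/r}(\Sigma_{g,b};\delta)$ as classifying spaces of a topological category whose objects are closed $1$-manifolds with $r$-Spin structure and whose morphisms are $r$-Spin bordisms, so that the stabilisation maps correspond to pre- or post-composition with a fixed generating morphism (a pair of pants, a handle, a boundary-adding disc, etc.). The framework of \cite{R-WResolution} then reduces stability to constructing a semi-simplicial resolution $\cA^{1/r}_\bullet(\Sigma_{g,b};\delta)$, whose $p$-simplices are $(p+1)$-tuples of disjoint properly embedded arcs with prescribed endpoint behaviour, together with a compatible $r$-Spin structure, and verifying that its geometric realisation is $\lfloor (g-c)/2 \rfloor$-connected for a small universal constant $c$.

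The key estimate is therefore the connectivity of $\cA^{1/r}_\bullet$, which I would prove by comparison with the classical undecorated arc complex $\cA_\bullet$, whose high connectivity is due to Harer and Hatcher. The forgetful map $\cA^{1/r}_\bullet \to \cA_\bullet$ has fibre over each simplex given by the set of $r$-Spin structures on the cut surface which extend $\delta$ across the arcs. Lemma \ref{lem:Components}, proved in \cite{RWFramedPinMCG}, describes $\pi_0\Spin^r$ modulo the mapping class group: a single orbit when $r$ is odd, two Arf orbits when $r$ is even, with $r^{2g}$ isomorphism classes in total. Applying this to the cut surface, and showing that the relative mapping class group of the arc system acts transitively within each Arf component on these extensions, promotes connectivity of $\cA_\bullet$ to connectivity of $\cA^{1/r}_\bullet$ via a standard fibre-sequence argument.

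The main obstacle is proving the transitivity of the relative mapping class group action on $r$-Spin extensions across a given arc system, uniformly in $\delta$ and for every face of every simplex. This reduces to analysing the action of Dehn twists about simple closed curves disjoint from the arcs on the set of $r$-Spin structures, which is controlled by the associated quadratic form (for $r$ even) or $H^1(-;\bZ/r)$-torsor structure (in general). Provided the surface cut along the arcs retains enough genus, the subgroup generated by such twists acts transitively on the appropriate orbit, and this is the source of the numerical constraint: balancing the Harer--Hatcher slope against the genus loss from cutting yields exactly $5* \leq 2g-7$. Once the connectivity of $\cA^{1/r}_\bullet$ is established at this slope, the spectral sequence of the resolution in \cite{R-WResolution} produces the asserted homology isomorphism for every stabilisation map simultaneously, giving the independence of the homology in the claimed range of degrees.
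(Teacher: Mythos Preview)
The paper does not prove this theorem here; it is imported from \cite{RWFramedPinMCG}, where the hypotheses of the general stability framework of \cite{R-WResolution} are verified for $r$-Spin surfaces. Your plan is precisely to carry out that verification, and the ingredients you name---a semi-simplicial resolution by arcs, the Harer--Hatcher connectivity of the undecorated arc complex, and the transitivity of the mapping class group action on $\pi_0\Spin^r$ coming from Lemma~\ref{lem:Components}---are the ones used there, so at the level of strategy your proposal agrees with the cited argument.

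One technical point in your sketch is miscast and would cause trouble if pursued literally. You describe a forgetful map $\cA^{1/r}_\bullet \to \cA_\bullet$ whose fibre over an arc system is ``the set of $r$-Spin structures on the cut surface extending $\delta$''. But in the resolution of $\mathcal{M}^{1/r}(\Sigma_{g,b};\delta)$ the $r$-Spin structure on the surface is already fixed, and an arc carries no additional $r$-Spin datum; so over a given $r$-Spin surface the simplicial object of arc systems is \emph{identical} to the undecorated arc complex, and its connectivity is inherited directly from Harer--Hatcher with no fibre analysis needed. The $r$-Spin structure enters at a different step: to run the spectral-sequence argument of \cite{R-WResolution} one must identify each level $X_p$ of the resolution with a moduli space $\mathcal{M}^{1/r}(\Sigma_{g',b'};\delta')$ of lower complexity, and for this one needs that any two arc systems of the same combinatorial type yield, after cutting, the same $r$-Spin boundary condition $\delta'$ up to diffeomorphism. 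It is this identification, not a fibre-sequence comparison of arc complexes, that consumes the transitivity statement of Lemma~\ref{lem:Components} (within an Arf class when $r$ is even), and it is the bookkeeping of how much genus is available after cutting that produces the slope $5* \leq 2g-7$.
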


If we are discussing a certain homology group, we will use the term ``in the stable range" to mean ``for $g$ large enough to satisfy the above inequality".

\subsection{A universal approximation}\label{sec:UniversalApprox}

We have a complex line bundle $\gamma^r \to B\Spin^r(2)$, and we define the spectrum (in the sense of stable homotopy theory)
$$\MT{Spin^r}{2} := \mathbf{Th}(-\gamma^r \to B\Spin^r(2)),$$
that is, the Thom spectrum of the complement of the universal bundle over $B\Spin^r(2)$. Let $\Omega^\infty \MT{Spin^r}{2}$ denote the associated infinite loop space. By Pontrjagin--Thom theory the group $\pi_0(\Omega^\infty \MT{Spin^r}{2})$ is isomorphic to the group of oriented $r$-Spin surfaces up to cobordism (under disjoint union), but the cobordisms are required to be 3-manifolds whose tangent bundle is isomorphic to $\epsilon^1 \oplus L^{\otimes r}$ for some complex line bundle $L$, by an isomorphism which is standard at the boundaries. This may be seen by carrying out the Pontrjagin--Thom construction in this case, and we will not dwell on it. There is a natural homomorphism
$$\chi : \pi_0(\Omega^\infty \MT{Spin^r}{2}) \lra \bZ$$
given by sending an $r$-Spin surface to its Euler characteristic, which is well-defined over this cobordism relation by the Poincar{\'e}--Hopf theorem: the allowed cobordisms all admit a nowhere vanishing vector field which agrees with the inwards and outwards vector fields at the incoming and outgoing boundaries respectively. As $\Sigma_g$ admits an $r$-Spin structure precisely when $r \divides 2-2g$, we see that the image of $\chi$ is $2r\bZ$ if $r$ is odd and $r\bZ$ if $r$ is even. 

In \S \ref{sec:CalcPiZero}
we will show that
$$
\pi_0(\Omega^\infty \MT{Spin^r}{2}) \cong
\begin{cases}
\bZ & \text{$r$ odd}\\
\bZ \oplus \bZ/2 & \text{$r$ even}
\end{cases}
$$
as abstract groups, which implies that $\chi$ is injective when $r$ is odd and has kernel $\bZ/2$ when $r$ is even. When $r$ is even, there are maps
$$\MT{Spin^r}{2} \lra \MT{Spin^2}{2} \lra \Sigma^{-2} \mathbf{MSpin}$$
ending at the usual Spin bordism spectrum, and so a (surjective) homomorphism
$$\pi_0(\Omega^\infty \MT{Spin^r}{2}) \lra \Omega_2^{\Spin}(*) \cong \bZ/2$$
where the isomorphism is by sending a Spin surface to its Arf invariant. In total we obtain a canonical isomorphism
$$
\pi_0(\Omega^\infty \MT{Spin^r}{2}) \cong
\begin{cases}
2r\bZ & \text{$r$ odd}\\
r\bZ \oplus \bZ/2 & \text{$r$ even}
\end{cases}
$$
given by the Euler characteristic and the Arf invariant. We write $\Omega^\infty_{\chi, \bullet} \MT{Spin^r}{2}$ for the union of those (one or two) path components which map to $\chi$ under the Euler characteristic map.

Given an $r$-Spin surface bundle $(\Sigma_g \to E \overset{\pi}\to B, L \to E, \varphi : L^{\otimes r} \cong T_v)$ there is a Becker--Gottlieb \cite{BG} pretransfer 
$$\mathrm{prt} : \Sigma^\infty B_+ \lra \mathbf{Th}(-T_v \to E),$$
and a map $\tau^v : E \to B\Spin^r(2)$ classifying the vertical tangent bundle. Composing these gives a map
$$\alpha^\sharp := \mathbf{Th}(-\tau^v)\circ \mathrm{prt} : \Sigma^\infty B_+ \lra \MT{Spin^r}{2}$$
with adjoint
$$\alpha : B \lra \Omega^\infty_{2-2g, \bullet} \MT{Spin^r}{2}.$$
Performing this construction on the universal $r$-Spin bundle with genus $g$ fibres gives a comparison map
\begin{equation}\label{eq:ComparisonMap}
\alpha_g : \mathcal{M}^{1/r}(\Sigma_g) \lra \Omega^\infty_{2-2g, \bullet} \MT{Spin^r}{2}.
\end{equation}
The homological stability result and the methods of \cite{GMTW} or \cite{GR-W} imply that this map is an integral homology equivalence in degrees $5* \leq 2g-7$.

\subsection{The ``Mumford conjecture" for $\gM_g^{1/r}$}\label{sec:MumfordConjecture}

Theorem \ref{thm:MumfordConjecture} stated in the introduction will follow one we are able to compute the rational cohomology of the infinite loop space $\Omega^\infty_0 \MT{Spin^r}{2}$. This is easy: for any spectrum $\X$, the rational cohomology of the basepoint component of the associated infinite loop space, $\Omega^\infty_0 \X$, is the free graded commutative algebra on the rational vector space $H^{* > 0}_{\mathrm{spec}}(\X;\bQ)$ of positive degree elements in the spectrum cohomology of $\X$.

In our case, the forgetful map $\MT{Spin^r}{2} \to \MT{SO}{2}$ induces an isomorphism on rational cohomology (as we can compute the cohomology of both sides using the Thom isomorphism), and hence $\Omega^\infty_0 \MT{Spin^r}{2} \to \Omega^\infty_0\MT{SO}{2}$ also induces such an isomorphism. Theorem \ref{thm:MumfordConjecture} then follows from the known cohomology of $\Omega^\infty_0\MT{SO}{2}$, c.f.\ \cite{MW}.

\section{Computing the low-dimensional cohomology of $\gM_g^{1/r}$}

The purpose of this section is to give the following calculation of the first and second integral cohomology of the orbifold $\gM_g^{1/r}$, which establishes Theorem \ref{thm:LowDimHomology}.

\begin{thm}
There are isomorphisms
$$H^2(\gM_g^{1/r};\bZ) \cong \bZ \oplus
\begin{cases}
\bZ/4 & r \equiv 2 \,\, \mathrm{mod} \,\, 4\\
\bZ/8 & r \equiv 0 \,\, \mathrm{mod} \,\, 4\\
0 & \text{else}
\end{cases}
\oplus
\begin{cases}
\bZ/3 & r \equiv 0 \,\, \mathrm{mod} \,\, 3\\
0 & \text{else}
\end{cases}.
$$
and $H^1(\gM_g^{1/r};\bZ) =0$ as long as $g$ is in the stable range.
\end{thm}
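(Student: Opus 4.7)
By the Madsen--Weiss-type comparison theorem at the end of Section 2, the map $\alpha_g$ is an integral homology isomorphism in the stable range, so it suffices to compute $H^i$ of the infinite loop space $\Omega^\infty_{2-2g,\bullet}\MT{Spin^r}{2}$ for $i = 1, 2$. All components of an infinite loop space are homotopy equivalent, so we may work with $Y := \Omega^\infty_0 \MT{Spin^r}{2}$. Since $Y$ is a nilpotent $H$-space, the Hurewicz theorem gives $H_1(Y;\bZ) \cong \pi_1(\MT{Spin^r}{2})$; moreover the cokernel of the map $\pi_2(\MT{Spin^r}{2}) \to H_2(Y;\bZ)$ lies in the image of the Pontryagin product $H_1(Y)\otimes H_1(Y)\to H_2(Y)$, which is $2$-torsion by graded commutativity and hence contributes nothing to $\mathrm{Hom}(H_2,\bZ)$. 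The universal coefficient theorem therefore reduces the computation to
$$H^1(Y;\bZ) \cong \mathrm{Hom}(\pi_1(\MT{Spin^r}{2}),\bZ), \qquad H^2(Y;\bZ) \cong \mathrm{Hom}(\pi_2(\MT{Spin^r}{2}),\bZ) \oplus \mathrm{Ext}^1(\pi_1(\MT{Spin^r}{2}),\bZ),$$
so the theorem amounts to the claim that $\pi_1(\MT{Spin^r}{2})$ equals the stated torsion group and that $\pi_2(\MT{Spin^r}{2})$ is free of rank one.

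To compute these homotopy groups I would apply the Atiyah--Hirzebruch spectral sequence for the Thom spectrum $\MT{Spin^r}{2} = \mathbf{Th}(-\gamma^r \to \CPinf)$,
$$E^2_{p,q} = H_p(\CPinf;\pi_q^s) \Longrightarrow \pi_{p+q-2}(\MT{Spin^r}{2}).$$
Since $H_*(\CPinf;\bZ)$ is $\bZ$ in each even degree and vanishes in odd degrees, the only $E^2$-entries relevant to $\pi_1$ are $E^2_{0,3} = \pi_3^s = \bZ/24$ and $E^2_{2,1} = \pi_1^s = \bZ/2$, while those relevant to $\pi_2$ are $E^2_{4,0} = \bZ$ and $E^2_{2,2} = \pi_2^s = \bZ/2$. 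The differentials emanating from the $p=2$ column are controlled by the first attaching map of $\CPinf$ (the Hopf element $\eta$) twisted by the Thom class of $-\gamma^r$; since $c_1(\gamma^r) = rc$, this twist is essentially multiplication by $r$, so the differential vanishes mod $2$ precisely when $r$ is even. The differential $d^4: E^4_{4,0} \to E^4_{0,3}$ and its higher analogues supply the remaining relations at the primes $2$ and $3$ depending on $r \pmod{24}$; in particular $\pi_2(\MT{Spin^r}{2})$ ends up free of rank one coming from $E^2_{4,0}$.

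Carrying out this calculation produces the stated torsion groups: at the prime $3$ the summand $\bZ/3 \subset \pi_3^s$ survives to $E^\infty$ precisely when $3 \mid r$; at the prime $2$ the interaction of $\bZ/2 = \pi_1^s$ with $\bZ/8 \subset \pi_3^s$ gives $0$ when $r$ is odd, $\bZ/4$ when $r \equiv 2 \pmod 4$ (via a nontrivial extension), and $\bZ/8$ when $r \equiv 0 \pmod 4$. The principal obstacle I anticipate is resolving this hidden extension at the prime $2$ which distinguishes $\bZ/4$ from $\bZ/8$; I would address it by combining the spectral sequence with a Pontryagin--Thom geometric interpretation of $\pi_1(\MT{Spin^r}{2})$ as a cobordism group of $3$-manifolds with appropriate $r$-Spin reduction of the tangent bundle, together with naturality in the maps $\MT{Spin^{r'}}{2} \to \MT{Spin^r}{2}$ for $r \mid r'$ to pin down the order of a geometric generator.
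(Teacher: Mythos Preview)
Your reduction to computing $\pi_1(\MT{Spin^r}{2})$ matches the paper exactly, and your choice of the Atiyah--Hirzebruch spectral sequence in place of the Adams spectral sequence is a legitimate alternative that yields the same information up to the point you reach. You have also correctly located the crux of the matter: distinguishing $\bZ/4$ from $\bZ/8$ at the prime $2$. The gap is that your proposed resolution is not an argument. Naturality in $r \mid r'$ is unlikely to help on its own, because the $\bF_2$-cohomology of $\MT{Spin^r}{2}$ as a Steenrod module is independent of the even integer $r$ (one always has $Sq(u_{-2}) = u_{-2}$), so neither the Adams nor the Atiyah--Hirzebruch spectral sequence for the spectrum itself can see $r \bmod 4$; and the map $\MT{Spin^{r'}}{2} \to \MT{Spin^r}{2}$ sends $x^i u_{-2}$ to $(r'/r)^i x^i u_{-2}$, hence vanishes mod $2$ above the bottom cell when $r'/r$ is even, giving little leverage. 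A geometric bordism generator of order exactly $4$ or $8$ would settle it, but you have not produced one.

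The paper's resolution is to pass to the cofibre $\X_r$ of $\MT{Spin^r}{2} \to \MT{SO}{2}$. The point is that $H^{2i+1}(\X_r;\bZ) \cong \bZ/r^{i+1}$, so the $\bF_2$-Steenrod module of $\X_r$ \emph{does} depend on $r \bmod 4$: one has $Sq^1(a_0) = a_1$ precisely when $r \equiv 2 \pmod 4$. This extra Bockstein changes the Adams $E_2$-page of $\X_r$, and comparing the long exact sequence of homotopy groups for the cofibre sequence with the known $\pi_*(\MT{SO}{2})$ forces the ambiguous differential in the Adams chart for $\MT{Spin^r}{2}$ to be nonzero when $r \equiv 2 \pmod 4$ (giving $\bZ/4$) and zero when $r \equiv 0 \pmod 4$ (giving $\bZ/8$). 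The same idea would work in your AHSS framework, but the key ingredient---an auxiliary object whose invariants see $r \bmod 4$---is missing from your proposal.

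A minor point on the reduction step: your identity $H^2(Y;\bZ) \cong \mathrm{Hom}(\pi_2,\bZ) \oplus \mathrm{Ext}(\pi_1,\bZ)$ needs more than the cokernel of $\pi_2 \to H_2$ being torsion; a torsion cokernel only gives $\mathrm{Hom}(H_2,\bZ) \hookrightarrow \mathrm{Hom}(\pi_2,\bZ)$. The paper sidesteps this by invoking the rational computation to see directly that $\mathrm{rank}\,H^2 = 1$, so $\mathrm{Hom}(H_2,\bZ) = \bZ$.
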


For $g$ in the stable range there are isomorphisms
$$H^2(\gM_g^{1/r};\bZ) \cong H^2(\mathcal{M}^{1/r}(\Sigma_g);\bZ) \cong H^2(\Omega_0^\infty \MT{Spin^r}{2};\bZ).$$
The rank of $H^2(\Omega_0^\infty \MT{Spin^r}{2};\bZ)$ is the same as the rank of the second rational cohomology, which is one by Theorem \ref{thm:MumfordConjecture}. By the universal coefficient theorem and Hurewicz' theorem, we have
$$H^2(\Omega_0^\infty \MT{Spin^r}{2};\bZ) \cong \bZ \oplus H_1(\Omega^\infty_0 \MT{Spin^r}{2};\bZ) \cong \bZ \oplus \pi_1(\MT{Spin^r}{2})$$
and so the problem of computing the second cohomology of $\gM_g^{1/r}$ is reduced to the problem of computing a stable homotopy group. 

We approach this problem by observing that by the Thom isomorphism the cohomology $H^*(\MT{Spin^r}{2};\bZ)$ is a free module of rank one over $H^*(B\Spin^r(2);\bZ) = \bZ[x]$ on a generator $u_{-2} \in H^{-2}(\MT{Spin^r}{2};\bZ)$, and that at each prime the action of the Steenrod algebra on $H^*(\MT{Spin^r}{2};\bF_p)$ is determined by its action on $H^*(B\Spin^r(2);\bF_p)$ along with an identity
$$\mathcal{P}(u_{-2}) = f_p(x) \cdot u_{-2}$$
for some formal power series $f_p(x) \in H^*(B\Spin^r(2);\bF_p)$ which may be computed from characteristic classes of $\gamma^{r} \to B\Spin^r(2)$. In particular $f_2(x)$ is the total Stiefel--Whitney class of $-\gamma^{r}$, and $f_3(x)$ is the total Pontrjagin class of $-\gamma^{ r}$ reduced modulo 3.

\begin{defn}
Let $\X_r$ be the homotopy cofibre in the sequence
$$\MT{Spin^r}{2} \lra \MT{SO}{2} \lra \X_r.$$
\end{defn}

\begin{prop}\label{prop:XrIntegralCohomology}
The integral cohomology of $\X_r$ is given by $H^{even}(\X_r;\bZ)=0$ and $H^{2i+1}(\X_r;\bZ) = \bZ/r^{i+1}$ for $i \geq 0$.
\end{prop}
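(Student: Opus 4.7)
The plan is to compute the induced map on integer cohomology for the map $\MT{Spin^r}{2} \to \MT{SO}{2}$, and then read off the cohomology of the cofibre from the long exact sequence.

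First, I would identify the map on base spaces. Since $B\Spin^r(2) \simeq BU(1) \simeq BSO(2)$, and $\gamma^r$ is by definition the line bundle whose $r$-th tensor power is the pullback of the canonical oriented $2$-plane bundle $\gamma$, the map $B\Spin^r(2) \to BSO(2)$ is (homotopic to) the $r$-th power map $BU(1) \to BU(1)$. Writing $x \in H^2(B\Spin^r(2);\bZ)$ and $y \in H^2(BSO(2);\bZ)$ for the Euler class generators, this map sends $y \mapsto rx$.

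Next, I apply the Thom isomorphism. We have $H^*(\MT{SO}{2};\bZ) = \bZ[y]\cdot v_{-2}$ and $H^*(\MT{Spin^r}{2};\bZ) = \bZ[x]\cdot u_{-2}$, each with a Thom class in degree $-2$. Since the bundle $\gamma^r$ is the pullback of $\gamma$ along our map, the induced map of Thom spectra preserves the Thom class, i.e.\ sends $v_{-2} \mapsto u_{-2}$. Combined with $y \mapsto rx$, the induced map in each (even) degree $2j$ (for $j\geq -1$) sends $y^{j+1}v_{-2} \mapsto r^{j+1}x^{j+1}u_{-2}$, i.e.\ is multiplication by $r^{j+1}$ on $\bZ$. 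All odd-degree cohomology groups of both spectra vanish.

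Finally, the cofibration $\MT{Spin^r}{2} \to \MT{SO}{2} \to \X_r$ induces a long exact sequence
\[
\cdots \to H^{n-1}(\MT{Spin^r}{2}) \to H^n(\X_r) \to H^n(\MT{SO}{2}) \to H^n(\MT{Spin^r}{2}) \to H^{n+1}(\X_r) \to \cdots
\]
For $n=2j$ this reduces, since the odd cohomology groups vanish, to a short exact sequence
\[
0 \to H^{2j}(\X_r) \to \bZ \xrightarrow{\,\cdot r^{j+1}\,} \bZ \to H^{2j+1}(\X_r) \to 0,
\]
which immediately gives $H^{\mathrm{even}}(\X_r;\bZ) = 0$ and $H^{2j+1}(\X_r;\bZ) = \bZ/r^{j+1}$ for $j\geq -1$ (where $j=-1$ gives $H^{-1}=0$, as required), matching the proposition.

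There is no real obstacle here; the only subtle point worth double-checking is that the forgetful map $B\Spin^r(2)\to BSO(2)$ really does correspond to the $r$-th power map on $BU(1)$ under the standard identification $B\Spin^r(2)\simeq BU(1)$, which follows directly from the definition of an $r$-Spin structure as a line bundle $L$ together with an isomorphism $L^{\otimes r}\cong T\Sigma$.
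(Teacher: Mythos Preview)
Your proof is correct and takes essentially the same approach as the paper: identify the induced map on cohomology via the Thom isomorphism as multiplication by $r^{j+1}$ in degree $2j$, then read off the cohomology of the cofibre from the resulting short exact sequences. The paper's proof is simply a terse one-line version of exactly this, recording only the short exact sequence
\[
0 \lra \bZ\{x^{i+1}\cdot u_{-2}\} \overset{\cdot r^{i+1}}\lra \bZ\{x^{i+1}\cdot u_{-2}\} \lra H^{2i+1}(\X_r;\bZ) \lra 0,
\]
whereas you have spelled out why the map is multiplication by $r^{i+1}$.
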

\begin{proof}
On cohomology, after applying the Thom isomoprhism, we have short exact sequences
$$0 \lra \bZ\{x^{i+1} \cdot u_{-2}\} \overset{\cdot r^{i+1}} \lra \bZ\{x^{i+1} \cdot u_{-2}\} \lra H^{2i+1}(\X_r;\bZ) \lra 0.$$
\end{proof}

We can now refine our discussion in \S \ref{sec:MumfordConjecture}: not only is the forgetful map a rational equivalence, it is an equivalence after inverting just $r$.

\begin{cor}
The spectrum $\X_r [\frac{1}{r}]$ is contractible, so the map of localised spectra
$$\MT{Spin^r}{2}\left[\frac{1}{r} \right] \lra \MT{SO}{2}\left[\frac{1}{r} \right]$$
is a homotopy equivalence. Hence $\Omega^\infty_0 \MT{Spin^r}{2} \lra \Omega^\infty_0 \MT{SO}{2}$ is a homology equivalence with any $\bZ[\tfrac{1}{r}]$-module coefficients. 
\end{cor}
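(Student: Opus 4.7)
The plan is to deduce everything directly from the integral cohomology computation in Proposition \ref{prop:XrIntegralCohomology}. First I would observe that the groups $H^*(\X_r;\bZ)$ are all $r$-power torsion, so by the universal coefficient theorem the integral homology groups $H_*(\X_r;\bZ)$ are also $r$-power torsion. For a spectrum $\Y$, its $\bZ[\tfrac{1}{r}]$-localization $\Y[\tfrac{1}{r}] := \Y \wedge M\bZ[\tfrac{1}{r}]$ (smashing with the Moore spectrum of $\bZ[\tfrac{1}{r}]$) satisfies $\pi_*(\Y[\tfrac{1}{r}]) = \pi_*(\Y) \otimes \bZ[\tfrac{1}{r}] = H_*(\Y;\bZ) \otimes \bZ[\tfrac{1}{r}]$, and this kills $r$-power torsion. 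Applying this to $\Y = \X_r$ shows $\pi_*(\X_r[\tfrac{1}{r}]) = 0$, so $\X_r[\tfrac{1}{r}]$ is contractible.

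Next I would smash the cofibre sequence
$$\MT{Spin^r}{2} \lra \MT{SO}{2} \lra \X_r$$
with $M\bZ[\tfrac{1}{r}]$. Since smashing preserves cofibre sequences and the right-hand term becomes contractible, the first map is a homotopy equivalence of localised spectra. This gives the second assertion.

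Finally, to pass from spectra to the infinite loop spaces, I would use that an equivalence of spectra induces an equivalence of associated infinite loop spaces, and that for the basepoint component $\Omega^\infty_0$ of a connective (or bounded-below) spectrum $\Y$, $\bZ[\tfrac{1}{r}]$-localisation commutes with $\Omega^\infty_0$ in the sense that $(\Omega^\infty_0 \Y)[\tfrac{1}{r}] \simeq \Omega^\infty_0(\Y[\tfrac{1}{r}])$; this is valid because $\Omega^\infty_0 \Y$ is a nilpotent space (being an $H$-space with abelian $\pi_1$). Combining these, the map $\Omega^\infty_0 \MT{Spin^r}{2} \to \Omega^\infty_0 \MT{SO}{2}$ becomes a homotopy equivalence after $\bZ[\tfrac{1}{r}]$-localisation, and hence is a $\bZ[\tfrac{1}{r}]$-homology isomorphism. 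The conclusion for arbitrary $\bZ[\tfrac{1}{r}]$-module coefficients follows from the universal coefficient theorem together with flatness of $\bZ[\tfrac{1}{r}]$ over $\bZ$.

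The only point which requires any care is the final step, where one must ensure that localisation of spectra interacts correctly with $\Omega^\infty_0$; this is standard but depends on the nilpotence of the basepoint component, rather than on any simple connectivity hypothesis. No new calculations are required beyond Proposition \ref{prop:XrIntegralCohomology}.
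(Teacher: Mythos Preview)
Your approach is the intended one: in the paper the corollary is stated without proof, as an immediate consequence of Proposition~\ref{prop:XrIntegralCohomology}, and your outline supplies exactly the expected details.

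There is one slip to fix. You write $\pi_*(\Y[\tfrac{1}{r}]) = \pi_*(\Y)\otimes\bZ[\tfrac{1}{r}] = H_*(\Y;\bZ)\otimes\bZ[\tfrac{1}{r}]$; the second equality is false for a general spectrum $\Y$, and nothing you have said so far justifies it for $\X_r$. The clean argument is: since $H_*(\X_r;\bZ)$ is $r$-power torsion and $\bZ[\tfrac{1}{r}]$ is flat, one has $H_*(\X_r[\tfrac{1}{r}];\bZ) = H_*(\X_r;\bZ)\otimes\bZ[\tfrac{1}{r}] = 0$, and as $\X_r[\tfrac{1}{r}]$ is bounded below it is then contractible by the Hurewicz theorem. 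Alternatively, run the Atiyah--Hirzebruch spectral sequence $H_p(\X_r;\pi_q(\gS))\Rightarrow\pi_{p+q}(\X_r)$ to see that $\pi_*(\X_r)$ is itself $r$-power torsion, whence $\pi_*(\X_r)\otimes\bZ[\tfrac{1}{r}]=0$ directly. With either correction the rest of your argument goes through unchanged.
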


The order of the group $\pi_1(\MT{Spin^r}{2})$ must then divide a power of $r$, as $\pi_1(\MT{SO}{2})=0$. The following lemma further controls the primes which can divide the order of this group.

\begin{lem}
For all primes $p \geq 5$, $H^*(\MT{Spin^r}{2};\bF_p)$ is a trivial module over the Steenrod algebra in degrees $* \leq 3$. Hence $\pi_1(\MT{Spin^r}{2})$ has no torsion of order $p$.
\end{lem}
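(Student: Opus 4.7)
\emph{Plan of proof.} The first part is a direct degree count. By the Thom isomorphism,
$$H^*(\MT{Spin^r}{2};\bF_p)\cong\bF_p[y]\cdot u_{-2}\qquad(|u_{-2}|=-2,\ |y|=2),$$
which is concentrated in even degrees. At primes $p\geq 5$, the Steenrod algebra $\mathcal{A}_p$ is generated by the Bockstein $\beta$ (of degree $1$) and the reduced powers $\mathcal{P}^i$ (of degree $2i(p-1)\geq 8$). The Bockstein annihilates even-concentrated cohomology, and $\mathcal{P}^i$ with $i\geq 1$ strictly raises degree by at least $8$, so neither can land in degree $\leq 3$ starting from a class in degree $\leq 3$. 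Hence the three-dimensional subspace $\langle u_{-2},\,yu_{-2},\,y^2u_{-2}\rangle$ in degrees $-2,0,2$ is a trivial $\mathcal{A}_p$-submodule.

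For the $\pi_1$-statement, my plan is to run the mod-$p$ Adams spectral sequence
$$E_2^{s,t}=\mathrm{Ext}^{s,t}_{\mathcal{A}_p}\!\bigl(H^*(\MT{Spin^r}{2};\bF_p),\,\bF_p\bigr)\Longrightarrow\pi_{t-s}(\MT{Spin^r}{2})^\wedge_p$$
and check that the antidiagonal $t-s=1$ vanishes identically. The $(s,t)=(0,1)$ entry is zero because $H^1=0$. For $s\geq 1$, build the initial portion of a minimal $\mathcal{A}_p$-free resolution $\cdots\to P_1\to P_0\twoheadrightarrow H^*(\MT{Spin^r}{2};\bF_p)$ with $P_s=\mathcal{A}_p\otimes V_s$. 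The generators $V_0$ lie in even degrees, and in the low-degree window the only syzygies come from the relations $\beta\cdot e=0$ for $e\in V_0$---any $\mathcal{P}^i$-involved syzygy sits in degree $\geq -2+2(p-1)\geq 6$---so $V_1$ is placed in odd degrees. Iterating, the generators of $V_s$ within this window lie in parity $s\pmod 2$. A class of $\mathrm{Ext}^{s,s+1}$ would demand a $V_s$-generator in degree $s+1$ of parity $s+1\pmod 2$, the wrong parity. Beyond the parity window (i.e.\ $s+1>2p-4$), the classical Adams vanishing line for $\mathcal{A}_p$ at $p\geq 5$ (slope $1/(2p-3)$) forces the bidegree $(s,s+1)$ into the vanishing region; this transfers to our module because in the window it decomposes as a sum of shifted trivial modules $\Sigma^{-2}\bF_p\oplus\bF_p\oplus\Sigma^{2}\bF_p\oplus\cdots$.

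The main obstacle is a clean stitching of the parity regime to the vanishing-line regime. A cleaner alternative, valid precisely when $p\divides r$, is to observe that $c_1(\gamma^r)=ry\equiv 0\pmod p$, so the total Wu class of $-\gamma^r$ is trivial mod $p$, giving $H^*(\MT{Spin^r}{2};\bF_p)\cong\Sigma^{-2}H^*(BU(1);\bF_p)$ as $\mathcal{A}_p$-modules and identifying $\pi_1(\MT{Spin^r}{2})^\wedge_p\cong\pi_3^s(BU(1)_+)^\wedge_p$; this vanishes at $p\geq 5$ by the Atiyah--Hirzebruch spectral sequence together with $\pi_3^s(S^0)=\bZ/24$ having no $p$-torsion. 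For primes $p\geq 5$ coprime to $r$, the corollary following Proposition \ref{prop:XrIntegralCohomology} already provides $\MT{Spin^r}{2}^\wedge_p\simeq\MT{SO}{2}^\wedge_p$, and $\pi_1(\MT{SO}{2})=0$ by Madsen--Weiss.
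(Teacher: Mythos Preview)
Your degree-count for the first part is exactly the paper's argument: even concentration kills the Bockstein, and $\mathcal{P}^i$ (degree $\geq 8$) overshoots the range.

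For the second part you take a substantially longer route than necessary, and the detour introduces a small gap. The paper's argument is a one-liner using precisely the observation you mention only in passing: since the module is trivial in degrees $\leq 3$, the Adams $E_2$-page in this range is just $\Sigma^{-2}M \oplus M \oplus \Sigma^2 M$ with $M = \mathrm{Ext}^{*,*}_{\mathcal{A}_p}(\bF_p,\bF_p)$. For $p\geq 5$ this $M$ consists solely of the $h_0$-tower in column $t-s=0$ throughout the range in question (the next non-trivial column is $t-s = 2p-3 \geq 7$), so the three shifted copies give $\bZ$-towers in columns $-2,0,2$ and nothing in column $1$. That is the entire proof. Your parity-and-vanishing-line machinery is recreating this fact by hand; it is not wrong in spirit, but the ``stitching'' you worry about is unnecessary once you invoke the known structure of $\mathrm{Ext}_{\mathcal{A}_p}(\bF_p,\bF_p)$ directly.

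Your case-by-case alternative is also more work than needed, and the $p\mid r$ branch has a genuine gap as written: an isomorphism $H^*(\MT{Spin^r}{2};\bF_p)\cong \Sigma^{-2}H^*(BU(1);\bF_p)$ of $\mathcal{A}_p$-modules yields an isomorphism of Adams $E_2$-pages, not of $p$-completed homotopy groups---differentials could differ. To conclude you would still need to verify that the $t-s=1$ column of the $E_2$-page vanishes, which is exactly the paper's computation and does not require $p\mid r$. (Your Atiyah--Hirzebruch argument for $\pi_3^s(BU(1)_+)$ does work, but it transfers to $\MT{Spin^r}{2}$ because the integral homology groups agree, not because of the $\mathcal{A}_p$-module isomorphism; and once you observe that, no case split is needed.) The $p\nmid r$ branch via the $\bZ[1/r]$-equivalence to $\MT{SO}{2}$ is correct, though citing Madsen--Weiss for $\pi_1(\MT{SO}{2})=0$ is heavy-handed.
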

\begin{proof}
Note that $H^*(\MT{Spin^r}{2};\bF_p)$ has no Bockstein operations, as it is supported in even degrees. Thus the shortest Steenrod operation is $\mathcal{P}^1$, which increases degrees by $2(p-1)$, so at least $8$ if $p \geq 5$. In particular $\mathcal{P}^1(u_{-2}) \in H^{\geq 6}(\MT{Spin^r}{2};\bF_p)$ is outside the range under consideration.

As it is a trivial module in this range, the $E^2$-page for the Adams spectral sequence is $\Sigma^{-2} M \oplus M \oplus \Sigma^2 M$ in this range, where $M := \mathrm{Ext}^{*,*}_{\mathcal{A}_p}(\bF_p,\bF_p)$. This consists of a $\bZ$-tower in degrees $-2$, $0$ and $2$, so $\pi_1$ has no $p$-torsion.
\end{proof}

Thus the group $\pi_1(\MT{Spin^r}{2})$ can have only 2- and 3-torsion, and only when 2 or 3 respectively divide $r$.

\begin{cor}
If $r$ is coprime to 6 then $H^2(\gM_g^{1/r};\bZ) \cong \bZ$ in the stable range.
\end{cor}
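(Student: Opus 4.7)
The plan is to combine the two constraints on $\pi_1(\MT{Spin^r}{2})$ that have just been established, together with the identification $H^2(\gM_g^{1/r};\bZ) \cong \bZ \oplus \pi_1(\MT{Spin^r}{2})$ in the stable range which was derived at the beginning of this section from Hurewicz and universal coefficients applied to the comparison map $\alpha_g$ and the infinite loop space $\Omega^\infty_0 \MT{Spin^r}{2}$.

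First I would observe that the previous corollary, which says $\MT{Spin^r}{2}[\tfrac{1}{r}] \simeq \MT{SO}{2}[\tfrac{1}{r}]$, together with the vanishing $\pi_1(\MT{SO}{2}) = 0$, implies that $\pi_1(\MT{Spin^r}{2})$ is $r$-power torsion; in particular its order is a product of primes dividing $r$. Next I would invoke the lemma immediately preceding the corollary, which shows that $\pi_1(\MT{Spin^r}{2})$ has no $p$-torsion for any prime $p \geq 5$, so that only the primes $2$ and $3$ can contribute.

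The conclusion is then purely arithmetic: if $\gcd(r,6)=1$ then no prime dividing $r$ is equal to $2$ or $3$, so the two constraints together force $\pi_1(\MT{Spin^r}{2}) = 0$. Plugging into the formula $H^2(\gM_g^{1/r};\bZ) \cong \bZ \oplus \pi_1(\MT{Spin^r}{2})$ yields the result. There is no real obstacle here — the corollary is essentially a bookkeeping combination of the preceding two results, and the only thing to be mildly careful about is to confirm that the identification of $H^2$ with $\bZ \oplus \pi_1$ from the start of this section is valid under the standing assumption that $g$ lies in the stable range, which it does by Theorem \ref{thm:MumfordConjecture} and the homological stability theorem.
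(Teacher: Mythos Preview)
Your proposal is correct and matches the paper's approach exactly: the corollary is stated immediately after the sentence ``Thus the group $\pi_1(\MT{Spin^r}{2})$ can have only 2- and 3-torsion, and only when 2 or 3 respectively divide $r$,'' and is deduced from precisely the two constraints you identify together with the identification $H^2 \cong \bZ \oplus \pi_1(\MT{Spin^r}{2})$ from the start of the section. There is nothing more to add.
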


\begin{lem}
The image of $\pi_0(\MT{Spin^r}{2})$ in $\pi_0(\MT{SO}{2}) \cong \bZ$ is $r\bZ$ if $r$ is odd, and $r/2\bZ$ if $r$ is even.
\end{lem}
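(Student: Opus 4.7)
The plan is to combine a geometric Pontrjagin--Thom lower bound with a homological upper bound coming from the Thom isomorphism together with the Hurewicz map.

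\textbf{Lower bound.} The spectrum $\MT{SO}{2}$ is $\mathbf{CP}^\infty_{-1}$, and a standard cell argument shows $\pi_0 \MT{SO}{2}$ is free of rank one with the Pontrjagin--Thom class $[\Sigma_g]$ of a closed oriented surface corresponding to $1-g$. By Lemma \ref{lem:Components}, $\Sigma_g$ admits an $r$-Spin structure $\zeta$ precisely when $r \mid 2-2g$, i.e.\ when $g \equiv 1 \pmod{r/\gcd(r,2)}$. For any such pair $(\Sigma_g,\zeta)$, Pontrjagin--Thom produces a class in $\pi_0\MT{Spin^r}{2}$ whose image in $\pi_0 \MT{SO}{2}$ is $[\Sigma_g]=1-g$. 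As $g$ ranges over the admissible genera, the values $1-g$ sweep out $(r/\gcd(r,2))\bZ \subseteq \bZ$.

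\textbf{Upper bound.} Via the Thom isomorphism, $H_0(\MT{SO}{2};\bZ) \cong H^2(BSO(2);\bZ)^\vee \cong \bZ$ and $H_0(\MT{Spin^r}{2};\bZ) \cong H^2(B\Spin^r(2);\bZ)^\vee \cong \bZ$. Since the Euler class $e$ of $BSO(2)$ pulls back along $B\Spin^r(2) \to BSO(2)$ to $c_1(L^{\otimes r}) = r\,c_1(L)$, the forgetful map is multiplication by $r$ on $H_0$. Meanwhile, the Hurewicz map $\pi_0\MT{SO}{2} \to H_0\MT{SO}{2}$ sends $[\Sigma_g]$ to $\langle e,[\Sigma_g]\rangle = \chi(\Sigma_g) = 2-2g$, and these values generate $2\bZ$; thus under the identification above the Hurewicz map is multiplication by $2$. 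Commutativity of the square
\begin{equation*}
\begin{array}{ccc}
\pi_0 \MT{Spin^r}{2} & \longrightarrow & \pi_0 \MT{SO}{2} \cong \bZ \\
\downarrow & & \downarrow \scriptstyle{\times 2} \\
H_0 \MT{Spin^r}{2} \cong \bZ & \xrightarrow{\,\times r\,} & H_0 \MT{SO}{2} \cong \bZ
\end{array}
\end{equation*}
forces the image of the top horizontal map to lie in $\{n \in \bZ : 2n \in r\bZ\} = (r/\gcd(r,2))\bZ$.

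The two bounds coincide, so the image is exactly $(r/\gcd(r,2))\bZ$, which is $r\bZ$ when $r$ is odd and $(r/2)\bZ$ when $r$ is even. The main obstacle is the clean identification $\pi_0 \MT{SO}{2} \cong \bZ$ with Hurewicz equal to multiplication by $2$; this is standard for $\mathbf{CP}^\infty_{-1}$ (e.g., from the bottom two cells $S^{-2}\cup_\eta e^0$ together with the observation that all closed oriented surface classes have even Hurewicz image), but is where all the nontrivial content sits.
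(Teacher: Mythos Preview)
Your proof is correct. The lower bound is exactly what the paper does (construct $r$-Spin surfaces realising the required half Euler characteristics), but your upper bound takes a different route from the paper's.

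The paper's argument is entirely geometric: having already recorded in \S\ref{sec:UniversalApprox} that, by Pontrjagin--Thom, every class in $\pi_0\MT{Spin^r}{2}$ is represented by a closed $r$-Spin surface, the upper bound is simply the observation that such a surface has $r \mid \chi$, so its image $\chi/2$ lies in $(r/\gcd(r,2))\bZ$. Your upper bound instead bypasses the cobordism interpretation of $\pi_0\MT{Spin^r}{2}$ and uses only the Hurewicz square together with the Thom-isomorphism computation that the forgetful map is multiplication by $r$ on $H_0$. This is a little longer but has the advantage of not relying on the (unwritten-down) Pontrjagin--Thom description of $\pi_0\MT{Spin^r}{2}$; you only need the classical identification $\pi_0\MT{SO}{2}\cong\bZ$ via $\chi/2$. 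Either way works.
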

\begin{proof}
The identification $\pi_0(\MT{SO}{2}) \cong \bZ$ is by sending an oriented surface to half its Euler characteristic, so the problem is to determine which Euler characteristics can occur on $r$-Spin surfaces. The Euler characteristic may be identified with the divisibility of $c_1(T\Sigma_g)$, so if the surface has an $r$-Spin structure, $r | \chi$. Thus the minimal possible Euler characteristic of an r-Spin surface is $r$ if $r$ is even and $2r$ if $r$ is odd.
\end{proof}

\subsection{3-torsion}

At the prime 3 we have the identity
$$\mathcal{P}(u_{-2}) = p(\gamma_2^{\otimes r})\cdot u_{-2} = (1 - r^2 \cdot x^2) \cdot u_{-2},$$
where $p$ is the total Pontryjagin class. This determines the structure of the cohomology $H^*(\MT{Spin^r}{2};\bF_3)$ as a module over the Steenrod algebra. We are only interested in the case where 3 divides $r$, so $\mathcal{P}(u_{-2}) = u_{-2}$ and hence
$$H^*(\MT{Spin^r}{2};\bF_3) \cong \Sigma^{-2}H^{*}(BSO(2);\bF_3)$$
as modules over the Steenrod algebra. Thus the chart of the $E^2$-page of the Adams spectral sequence calculating the 3-primary homotopy of $\MT{Spin^r}{2}$ is as shown in Figure \ref{fig:p3Chart}. It immediately implies that $\pi_1(\MT{Spin^r}{2})_{(3)}$ is $\bZ/3$ if $3 \divides r$ and zero otherwise.
\begin{figure}[h]
\centering
\includegraphics[bb=0 0 179 133]{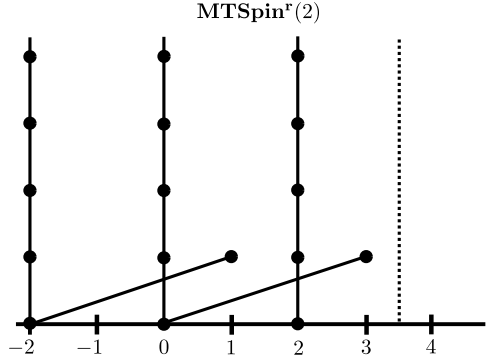}
\caption{Partial $E^2$-page of the Adams spectral sequence converging to the 3-primary homotopy groups of the spectrum $\MT{Spin^r}{2}$. The diagram is complete to the left of the dotted line.}
\label{fig:p3Chart}
\end{figure}

\subsection{2-torsion}

At the prime 2 we have the identity
$$Sq(u_{-2}) = w(\gamma_2^{\otimes r}) \cdot u_{-2} = (1 + r\cdot x) \cdot u_{-2}$$
and we are only interested in the case where 2 divides $r$ so we obtain $Sq(u_{-2}) = u_{-2}$ and hence
$$H^*(\MT{Spin^r}{2};\bF_2) \cong \Sigma^{-2}H^{*}(BSO(2);\bF_2)$$
as modules over the Steenrod algebra. Thus the chart of the $E^2$-page of the Adams spectral sequence calculating the 2-primary homotopy of $\MT{Spin^r}{2}$ is as shown in Figure \ref{fig:p2Chart}.
\begin{figure}[h]
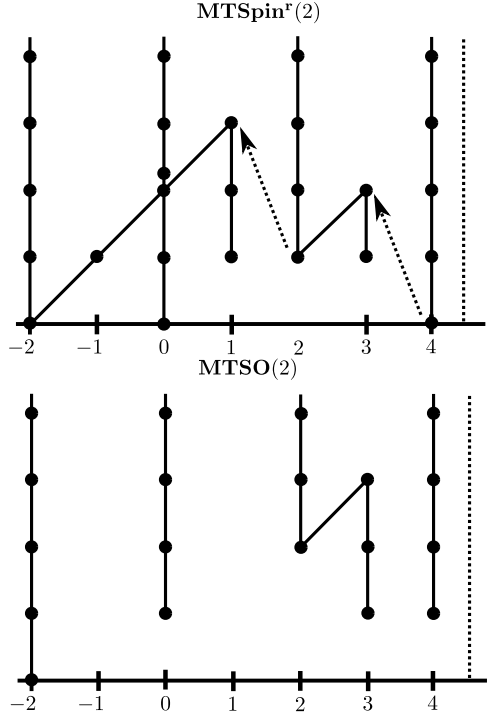

\centering
\includegraphics[bb=0 0 179 133]{./figures/p2chart}
\includegraphics[bb=0 0 178 133]{./figures/MTSO2Chart}
\caption{Partial $E^2$-page of the Adams spectral sequences converging to the 2-primary homotopy groups of the spectra $\MT{Spin^r}{2}$ and $\MT{SO}{2}$. The diagrams are complete to the left of the dotted line, and the dotted arrows show the only possible differentials in this range.}
\label{fig:p2Chart}
\end{figure}
There is an ambiguity in $\pi_1(\MT{Spin^r}{2})_{(2)}$ given by the possible differential, which we must resolve. In order to do this we also compute the $E^2$-page of the Adams spectral sequence for the cofibre $\X_r$ at the prime 2 (when $r$ is even). This depends on whether $r$ is divisible by 4 or not.

\begin{prop}
The cohomology groups $H^*(\X_r;\bF_2)$ are a copy of $\bF_2$ in each degree $i \geq 0$, generated by an element $a_i$. As a module over the Steenrod algebra, up to degree 4 the only non-trivial operations are
$$Sq^2(a_2) = a_4, \,\,Sq^2(a_1) = a_3,$$
and if $r \equiv 2 \,\,\mathrm{mod}\,\, 4$ then $Sq^1(a_0)=a_1$.
\end{prop}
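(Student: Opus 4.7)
The plan is to derive everything from the long exact sequence in mod $2$ cohomology associated to the defining cofibre sequence $\MT{Spin^r}{2} \overset{f}{\lra} \MT{SO}{2} \overset{g}{\lra} \X_r$, supplemented at the very end by a single Bockstein argument. Writing $u'_{-2}$ and $u_{-2}$ for the two Thom classes, one has $f^*(u'_{-2}) = u_{-2}$ while $f^*(y) = rx \equiv 0$ in $\bF_2$, since $r$ is even throughout this subsection. Hence $f^*$ is an isomorphism in degree $-2$ and zero in every degree $\geq 0$, so the long exact sequence produces $H^n(\X_r;\bF_2) \cong \bF_2$ for $n \geq 0$ and zero otherwise. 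I take the generator $a_{2k}$ to be the unique class with $g^*(a_{2k}) = y^{k+1} u'_{-2}$ and set $a_{2k+1} := \delta(x^{k+1} u_{-2})$, where $\delta$ is the connecting homomorphism.

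For the Steenrod action, recall that $Sq(u_{-2}) = w(-\gamma^r)\cdot u_{-2} = u_{-2}$ (since $w(\gamma^r) = 1+rx = 1$ in $\bF_2$), while $Sq(u'_{-2}) = (1+y)^{-1} u'_{-2}$, so that $Sq^{2i}(u'_{-2}) = y^i u'_{-2}$ and $Sq^{2i+1}(u'_{-2}) = 0$. Naturality of $Sq^j$ with respect to $\delta$ immediately gives $Sq^2(a_1) = \delta(Sq^2(xu_{-2})) = \delta(x^2 u_{-2}) = a_3$. For $Sq^2(a_2)$, the Cartan formula together with $Sq^2(y^2) = 0$ (since $Sq(y^2) = (y+y^2)^2 = y^2+y^4$) yields $g^*(Sq^2 a_2) = y^2\cdot y u'_{-2} = y^3 u'_{-2} = g^*(a_4)$, and injectivity of $g^*$ in degree $4$ forces $Sq^2(a_2) = a_4$. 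All remaining Steenrod operations landing in degrees $\leq 4$ vanish: on the odd-degree generators this follows from naturality under $\delta$ together with $Sq^j(x^{k+1} u_{-2}) = 0$ for $j$ odd, and on the even-degree generators one combines injectivity of $g^*$ in the relevant target degree with a short Cartan computation in $H^*(\MT{SO}{2};\bF_2)$.

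The one operation not pinned down by these naturality arguments is $Sq^1(a_0)$: the calculation above shows only that $g^*(Sq^1 a_0) = Sq^1(yu'_{-2}) = 0$, leaving $Sq^1(a_0) \in \ker g^*|_{H^1} = \bF_2\{a_1\}$ as an ambiguity. This is the main obstacle of the proof, and I would resolve it via an integral Bockstein argument. By Proposition \ref{prop:XrIntegralCohomology} one has $H^1(\X_r;\bZ) = \bZ/r$ and $H^0(\X_r;\bZ) = H^2(\X_r;\bZ) = 0$, so the universal coefficient theorem identifies $H^0(\X_r;\bF_2) = \mathrm{Tor}(\bZ/r,\bF_2) = \bF_2$ with $a_0$ as generator. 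Since $Sq^1$ coincides with the mod $2$ Bockstein, it factors as the integral Bockstein $\beta \colon H^0(\X_r;\bF_2) \to H^1(\X_r;\bZ) = \bZ/r$---which sends the Tor generator to the unique $2$-torsion element $r/2 \in \bZ/r$---followed by reduction mod $2$. The reduction of $r/2$ in $\bZ/2$ is nonzero precisely when $r/2$ is odd, i.e.\ when $r \equiv 2 \pmod{4}$, giving the stated value of $Sq^1(a_0)$ and completing the computation.
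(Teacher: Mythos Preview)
Your approach is essentially the same as the paper's (which merely asserts that the groups follow from the integral computation plus the universal coefficient theorem, and the operations follow from the known operations on $\MT{SO}{2}$ and $\MT{Spin^r}{2}$), and you have filled in considerably more detail than the paper does. The computations of $Sq^2(a_1)$, $Sq^2(a_2)$, and the Bockstein argument for $Sq^1(a_0)$ are all correct.

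There is, however, a small gap in your treatment of the \emph{vanishing} operations. You claim that for operations on even-degree generators one may use ``injectivity of $g^*$ in the relevant target degree'', but $g^*$ is \emph{not} injective in degree $3$: indeed $H^3(\MT{SO}{2};\bF_2)=0$ while $H^3(\X_r;\bF_2)=\bF_2$. Thus your argument does not cover $Sq^1(a_2)$ or $Sq^3(a_0)$, both of which land in degree $3$. Both are easily repaired. For $Sq^3(a_0)$, the Adem relation $Sq^3 = Sq^1 Sq^2$ together with your computation $Sq^2(a_0)=0$ gives the vanishing. For $Sq^1(a_2)$, the very same Bockstein argument you use for $Sq^1(a_0)$ applies: $a_2$ is the Tor generator coming from $H^3(\X_r;\bZ)=\bZ/r^2$, the integral Bockstein sends it to $r^2/2 \in \bZ/r^2$, and since $r$ is even $r^2/2$ is itself even, so its mod $2$ reduction vanishes. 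With these two patches your proof is complete.
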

\begin{proof}
The identification of the cohomology groups follows from Proposition \ref{prop:XrIntegralCohomology} and the Universal Coefficient Theorem. The identification of the Steenrod operations follows from the known operations in $\MT{SO}{2}$ and $\MT{Spin^r}{2}$ in this range.
\end{proof}
Thus if $r \equiv 2 \,\,\mathrm{mod}\,\, 4$ then $H^*(\X_r;\bF_2)$ is the module
$$(\bF_2 \overset{Sq^1}\lra \Sigma^{1}\bF_2 \overset{Sq^2}\lra \Sigma^{3}\bF_2) \oplus (\Sigma^2\bF_2 \overset{Sq^2}\lra \Sigma^4 \bF_2)$$
in degrees $\leq 4$, and if $r \equiv 0 \,\,\mathrm{mod}\,\, 4$ it is the module
$$\bF_2 \oplus (\Sigma^{1}\bF_2\overset{Sq^2}\lra \Sigma^3 \bF_2) \oplus (\Sigma^2\bF_2 \overset{Sq^2}\lra \Sigma^4 \bF_2)$$
in this range. The $E^2$-pages of the corresponding Adams spectral sequences split accordingly in this range, and each elementary module has a well-known Adams $E^2$-page.

\subsection{Case 1}
Let us first treat the case $r \equiv 2 \,\,\mathrm{mod}\,\, 4$, where a chart for the Adams $E^2$-page for $\X_r$ is shown in Figure \ref{fig:Xr2mod4Chart}.
\begin{figure}[h]
\centering
\includegraphics[bb=0 0 140 133]{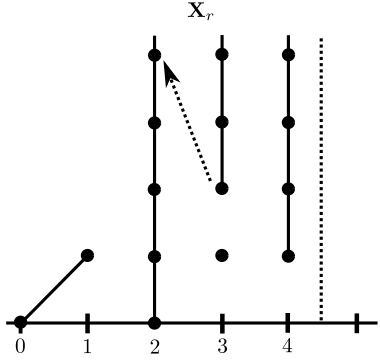}
\caption{Partial $E^2$-page of the Adams spectral sequences converging to the 2-primary homotopy groups of the spectrum $\gX_r$, when $r \equiv 2 \,\, \mathrm{mod} \,\, 4$. The diagram is complete to the left of the dotted line. There is a differential into the total degree 2 column: the dotted arrow shows the shortest possible such differential.}
\label{fig:Xr2mod4Chart}
\end{figure}
As $\X_r$ is $r$-torsion, the two $\bZ$-towers in degrees 2 and 3 must kill each other, so that $\pi_2(\X_r)_{(2)} = \bZ/2^k$ for some $k \geq 4$. The long exact sequence in 2-local homotopy gives the exact sequences
$$\cdots \lra \pi_4(\X_r)_{(2)} \lra \pi_3(\MT{Spin^r}{2})_{(2)} \lra \bZ/8 \lra \bZ/2 \lra 0,$$
and
$$0 \lra \bZ_{(2)} \lra \bZ_{(2)} \lra \bZ/2^k \lra \pi_1(\MT{Spin^r}{2})_{(2)} \lra 0.$$
The first sequence implies that $\pi_3(\MT{Spin^r}{2})_{(2)}$ has order at least 4, and so the differential entering the total degree 3 column is zero, and $\pi_3(\MT{Spin^r}{2})_{(2)} \cong \bZ/4$.

Suppose now that the differential entering the total degree 1 column is zero: then the Adams spectral sequence for $\MT{Spin^r}{2}$ collapses in the range we have drawn it, and in particular we may read off its $\pi_*(\gS)$-module structure. By the short exact sequence
$$0 \lra \pi_3(\MT{Spin^r}{2})_{(2)} \lra \pi_3(\MT{SO}{2})_{(2)} \lra \bZ/2 \lra 0$$
and comparing the (collapsing) charts for $\MT{Spin^r}{2}$ and $\MT{SO}{2}$, we see that $\bZ_{(2)} \cong \pi_2(\MT{Spin^r}{2})_{(2)} \to \pi_2(\MT{SO}{2})_{(2)} \cong \bZ_{(2)}$ must be an isomorphism. Thus
$$\bZ/2^{k \geq 4} \cong \pi_2(\gX_r)_{(2)} \cong  \pi_1(\MT{Spin^r}{2})_{(2)} \cong \bZ/8,$$
which is a contradiction. Thus when $r \equiv 2\,\, \mathrm{mod}\,\, 4$ the differential entering the degree 1 column is non-trivial, and so
$$\pi_1(\MT{Spin^r}{2})_{(2)} \cong \bZ/4.$$

\subsection{Case 2} Let us now treat the case $r \equiv 0 \,\, \mathrm{mod} \,\, 4$, where a chart for the Adams $E^2$-page for $\X_r$ is shown in Figure \ref{fig:Xr0mod4Chart}.
\begin{figure}[h]
\centering
\includegraphics[bb=0 0 140 133]{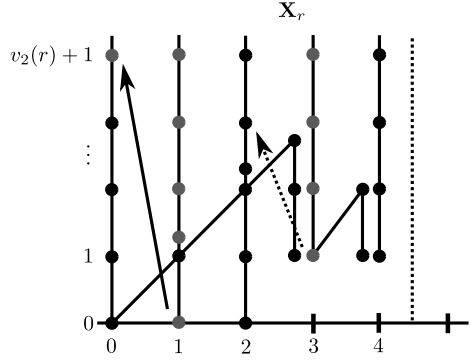}
\caption{Partial $E^2$-page of the Adams spectral sequences converging to the 2-primary homotopy groups of the spectrum $\gX_r$, when $r \equiv 0 \,\, \mathrm{mod} \,\, 4$. The diagram is complete to the left of the dotted line. The grey dots show groups which definitely die. There is a differential into the total degree 2 column: the dotted arrow shows the shortest possible such differential.}
\label{fig:Xr0mod4Chart}
\end{figure}
As $\X_r$ is $r$-torsion, the $\bZ$-towers must kill each other. As $\pi_0(\gX_r)$ fits into an exact sequence
$$\cdots \to\bZ \oplus \bZ/2 \overset{\cdot r/2}\to \bZ \to \pi_0(\gX_r) \to \bZ/2 \to 0$$
it is finite of order $r$, so its 2-component is finite of order $2^{v_2(r)}$. From the chart we see that it is cyclic of this order, so $\pi_0(\gX_r) \cong \bZ/r$, and we have the computation
$$\pi_*(\X_r)_{(2)} = 
\begin{cases}
\bZ/r \otimes \bZ_{(2)} & *=0\\
\bZ/2 & *=1\\
\bZ/2 \oplus \bZ/2^k & *=2\quad k \geq 3
\end{cases}
$$
in degrees up to 2. Consider the portion of the long exact sequence in 2-local homotopy,
$$\cdots \to \pi_3(\gX_r)_{(2)} \overset{0}\to \bZ_{(2)} \to \bZ_{(2)} \overset{f}\to \bZ/2 \oplus \bZ/2^{k \geq 3} \to \pi_1(\MT{Spin^r}{2})_{(2)} \to 0.$$
The map $f$ is onto the first factor, so suppose it sends 1 to $(1, 2^N)$. Then
$$\pi_1(\MT{Spin^r}{2})_{(2)} \cong \bZ/2^{\mathrm{min}(k, N+1)}.$$
The generator of $\pi_2(\MT{SO}{2})_{(2)}$ is in Adams filtration 2, so its image has filtration at least 2. Thus $N \geq 2$, and so $\mathrm{min}(k, N+1) \geq 3$, but $\pi_1(\MT{Spin^r}{2})_{(2)}$ is also at most $\bZ/8$, so $\mathrm{min}(k, N+1) = 3$, and so $N=2$. Thus
$$\pi_1(\MT{Spin^r}{2})_{(2)} \cong \bZ/8,$$
and there is no differential.

\subsection{$\pi_0(\MT{Spin^r}{2})$}\label{sec:CalcPiZero}

As we have used it in \S \ref{sec:UniversalApprox}, we remark that Figures \ref{fig:p3Chart} and \ref{fig:p2Chart} show that
$$
\pi_0(\MT{Spin^r}{2}) \cong
\begin{cases}
\bZ & \text{$r$ odd}\\
\bZ \oplus \bZ/2 & \text{$r$ even}
\end{cases}
$$
as abstract groups.

\subsection{Induced maps}

\begin{prop}
The map $\pi_2(\MT{Spin^r}{2}) = \bZ \to \pi_2(\MT{SO}{2}) = \bZ$ is given by multiplication by $ r^2U_r/12$, where
$$U_r = \begin{cases}
2 & 12 \divides r\\
4 & 4 \ndivides r, 3 \divides r\\
6 & 4 \divides r, 3 \ndivides r\\
12 & 4 \ndivides r, 3 \ndivides r.
\end{cases}$$
\end{prop}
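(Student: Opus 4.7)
The plan is to use the long exact sequence of homotopy groups induced by the cofiber sequence $\MT{Spin^r}{2} \to \MT{SO}{2} \to \gX_r$, which the paper has already introduced. Since $\pi_1(\MT{SO}{2}) = 0$ (from Madsen-Weiss together with the classical vanishing $H_1(\gM_g;\bZ) = 0$ for $g \geq 3$), and since $\pi_3(\gX_r)$ is finite (as $\gX_r$ is rationally trivial, by the earlier corollary inverting $r$) while $\pi_2(\MT{Spin^r}{2})$ is torsion-free, the relevant portion of the LES collapses to
\begin{equation*}
0 \to \pi_2(\MT{Spin^r}{2}) \to \pi_2(\MT{SO}{2}) \to \pi_2(\gX_r) \to \pi_1(\MT{Spin^r}{2}) \to 0.
\end{equation*}
Both $\pi_2$'s are infinite cyclic, so the map of interest is multiplication by an integer $N$ whose absolute value equals $|\pi_2(\gX_r)|/|\pi_1(\MT{Spin^r}{2})|$. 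The group $\pi_1(\MT{Spin^r}{2})$ has already been computed prime by prime, so the remaining task is to determine $|\pi_2(\gX_r)|$.

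Since $\gX_r$ is $r$-torsion, I work at the primes dividing $r$. At any prime $p \geq 5$, the Adams spectral sequence for $\gX_r$ has no relevant Steenrod operations in low total degrees (the shortest, $\mathcal{P}^1$, acts in degree $2(p-1) \geq 8$), so it collapses and yields $\pi_2(\gX_r)_{(p)} \cong H_2(\gX_r;\bZ_{(p)}) \cong (\bZ/r^2)_{(p)}$; here $H_2(\gX_r;\bZ) \cong \bZ/r^2$ follows from the integral homology LES of the cofiber sequence together with the observation that $\MT{Spin^r}{2} \to \MT{SO}{2}$ acts as multiplication by $r^k$ on $H_{2k-2}(-;\bZ) \cong \bZ$ (the Thom classes correspond while $x \in H^2(BSO(2))$ pulls back to $r y \in H^2(B\Spin^r(2))$). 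At $p=3$ with $3 \mid r$, an Adams SS analogue of Figure~\ref{fig:p3Chart} for $\gX_r$ computes $\pi_2(\gX_r)_{(3)}$, which together with the already-known $\pi_1(\MT{Spin^r}{2})_{(3)} = \bZ/3$ pins down the 3-part of the index. At $p=2$ with $r$ even, the Adams $E^2$-pages for $\gX_r$ already drawn in Figures~\ref{fig:Xr2mod4Chart} and~\ref{fig:Xr0mod4Chart} give $\pi_2(\gX_r)_{(2)}$ up to an exponent $k$ in a cyclic summand.

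The main obstacle is pinning down this exponent $k$ at the prime $2$, since the earlier analysis only bounded it from below. I would resolve this by exploiting naturality of the LES against the Adams SS for $\MT{SO}{2}$ displayed in Figure~\ref{fig:p2Chart}: the generator of $\pi_2(\MT{SO}{2})_{(2)} \cong \bZ_{(2)}$ has a specific Adams filtration, which constrains the filtration of its image in $\pi_2(\gX_r)_{(2)}$; combined with the already-computed value of $|\pi_1(\MT{Spin^r}{2})_{(2)}|$, these constraints force $k$ to take a unique value. A direct arithmetic check in each of the four cases for $U_r$ (the congruence class of $r$ modulo $12$) then verifies that $|\pi_2(\gX_r)|/|\pi_1(\MT{Spin^r}{2})|$ equals $r^2 U_r/12$ in all cases; the sign of $N$ is $+1$ because both spectra are complex oriented and the map preserves Thom classes.
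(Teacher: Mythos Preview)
Your overall strategy via the long exact sequence of the cofibre is reasonable, but there is a genuine gap at the prime $2$: the exponent $k$ in $\pi_2(\gX_r)_{(2)}$ cannot be extracted from the Adams filtration argument you describe. That argument is precisely what the paper already exploits in its Case~1 and Case~2 analyses, and in both cases it determines $\pi_1(\MT{Spin^r}{2})_{(2)}$ \emph{without} determining $k$ (the paper leaves $k \geq 4$ in Case~1 and $k \geq 3$ in Case~2). Concretely, in Case~2 the filtration bound on the image of the generator of $\pi_2(\MT{SO}{2})_{(2)}$ together with $|\pi_1|_{(2)}=8$ forces the paper's $N=2$, but places no further constraint on $k$. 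In fact one can check from the desired answer that $k = 2v_2(r)+1$, so $k$ depends on $v_2(r)$ and not merely on $r$ modulo $12$; your final ``arithmetic check in each of the four cases for $U_r$'' therefore cannot succeed as stated. Determining $k$ directly would require computing higher Adams differentials in $\gX_r$ (encoding higher Bocksteins that see $v_2(r)$), which is substantially more work than you indicate.

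The paper sidesteps this entirely by a different and shorter route. It uses the commutative square of Hurewicz maps
\[
\begin{array}{ccc}
\pi_2(\MT{Spin^r}{2}) & \overset{\cdot U_r}{\lra} & H_2(\MT{Spin^r}{2};\bZ) \\
\downarrow & & \downarrow\, \cdot r^2 \\
\pi_2(\MT{SO}{2}) & \overset{\cdot 12}{\lra} & H_2(\MT{SO}{2};\bZ),
\end{array}
\]
where the right-hand map is read off from the Thom isomorphism, the bottom Hurewicz map is the classical $12$, and the top Hurewicz map $U_r$ is computed via the Atiyah--Hirzebruch spectral sequence for $\MT{Spin^r}{2}$: the differentials out of $E^2_{2,0}$ are forced by the already-known $\pi_0$ and $\pi_1$, which determines the index of the edge homomorphism. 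Commutativity of the square then gives the desired factor $r^2 U_r/12$ directly, with no need to compute $\pi_2(\gX_r)$ at all.
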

\begin{proof}
There is a commutative square
\begin{diagram}
\pi_2(\MT{Spin^r}{2}) = \bZ &\rTo^{\cdot U_r}& H_2(\MT{Spin^r}{2};\bZ) = \bZ\\
\dTo & & \dTo^{\cdot r^2}\\
\pi_2(\MT{SO}{2}) = \bZ &\rTo^{\cdot 12}& H_2(\MT{SO}{2};\bZ) = \bZ,
\end{diagram}
where the Hurewicz map $U_r$ may be determined by studying the Atiyah--Hirzebruch spectral sequence
$$E_{p,q}^2 = H_p(\MT{Spin^r}{2};\pi_q(\gS)) \Rightarrow \pi_{p+q}(\MT{Spin^r}{2}),$$
where it occurs as an edge homomorphism. There is a unique pattern of differentials out of $E^2_{2,0}$ consistent with the known homotopy groups of $\MT{Spin^r}{2}$, and this determines the index of the image of the Hurewicz map.
\end{proof}

\section{Identifying classes in $H^2(\gM_g^{1/r};\bZ)$}

In the introduction we defined cohomology classes $\kappa_1^{a/r}$ and $\lambda^{a/r}$ in $H^2(\gM_g^{1/r};\bZ)$, similar in spirit to the classes $\kappa_1$ and $\lambda$ defined in the cohomology of $\gM_g$. When $r$ is even we have defined a further class $\mu$, which satisfies the equation $2\mu = \lambda^{-1/2} + 12\lambda^{1/2}$. All of these classes may in fact be defined on the infinite loop space $\Omega^\infty\MT{Spin^r}{2}$, as they all come from fibre-integration of a stable characteristic class associated to the vertical tangent bundle. In this section we will explain how to construct them in this manner.

\begin{defn}
For any generalised cohomology theory $E$ and any spectrum $\gX$, there is a \emph{cohomology suspension} map
$$\sigma^* : E^*(\gX) \lra E^*(\Omega^\infty \gX),$$
given by applying $E^*$ to the evaluation maps $\Sigma^n \Omega^n X_n \to X_n$ and taking a limit over $n$. 
\end{defn}

We write $\gamma^{r} \to B\Spin^r(2)$ for the tautological bundle, and $L$ for its canonical $r$-th root. The Thom isomorphism in spectrum cohomology gives
\begin{eqnarray*}
H^*(B\Spin^r(2);\bZ) &\lra& H^{*-2}(\MT{Spin^r}{2};\bZ)\\
x &\longmapsto& x \cdot u_{-2},
\end{eqnarray*}
and hence we may define $\kappa_1^{a/r} \in H^2(\Omega^\infty \MT{Spin^r}{2};\bZ)$ as $\sigma^*(c_1(L^a)^{2} \cdot u_{-2})$.

The virtual bundle $-\gamma^{r} \to B\Spin^r(2)$ is complex, and hence oriented in complex K-theory. Thus there is a Thom isomorphism in spectrum K-theory
\begin{eqnarray*}
K^0(B\Spin^r(2)) &\lra& K^{0}(\MT{Spin^r}{2})\\
x &\longmapsto& x \cdot \lambda_{-\gamma^{r}},
\end{eqnarray*}
and hence an element $\sigma^*(L^{\otimes a} \cdot \lambda_{-\gamma^{r}}) \in K^0(\Omega^\infty\MT{Spin^r}{2})$. We may define $\lambda^{-a/r}$ to be the first Chern class of this virtual bundle.

If $r$ is even, there is an isomorphism $(L^{\otimes r/2})^{\otimes 2} \cong \gamma^{r}$ and so $\gamma^{r}$ has a canonical Spin structure and hence is oriented in real K-theory. Thus there is a Thom isomorphism in spectrum KO-theory
\begin{eqnarray*}
KO^0(B\Spin^r(2)) &\lra& KO^{-2}(\MT{Spin^r}{2})\\
x &\mapsto& x \cdot \lambda_{-\gamma^{r}},
\end{eqnarray*}
and hence an element $\xi := \sigma^*(1 \cdot \lambda_{-\gamma^{r}}) \in KO^{-2}(\Omega^\infty\MT{Spin^r}{2})$. Elements in $KO^{-2}$ are represented by complex vector bundles with a trivialisation of the underlying real vector bundle, and such bundles have a canonical choice of half the first Chern class. We define $\mu := \tfrac{c_1}{2}(\xi) + 6\lambda^{1/2}$.

\subsection{Divisibility of classes in the torsion-free quotient}

\begin{proof}[Proof of Theorem \ref{thm:TorsionFreeDivisibility}]
Let us consider the fibration sequence of connected infinite loop spaces,
$$\Omega^{\infty+1}_0 \gX_r \lra \Omega^\infty_0 \MT{Spin^r}{2} \lra \Omega^\infty_0 \MT{SO}{2}.$$
Considering the Serre spectral sequence in cohomology, we see that there is a short exact sequence
$$0 \to H^2(\Omega^\infty_0\MT{SO}{2};\bZ) \to H^2(\Omega^\infty_0\MT{Spin^r}{2};\bZ) \to H^2(\Omega^{\infty+1}_0 \gX_r;\bZ) \to 0,$$
and furthermore, restricting to the torsion subgroup, the map
$$\mathrm{Tors} \, H^2(\Omega^\infty_0\MT{Spin^r}{2};\bZ) \lra H^2(\Omega^{\infty+1}_0 \gX_r;\bZ)$$
is Pontrjagin dual to the connecting homomorphism
$$H_1(\Omega^{\infty+1}_0 \gX_r) \cong \pi_2(\gX_r) \to H_1(\Omega^\infty_0\MT{Spin^r}{2}) \cong \pi_1(\MT{Spin^r}{2}).$$
The long exact sequence of homotopy groups
$$\cdots \to \pi_2(\MT{Spin^r}{2}) \overset{r^2U_r/12}\to \pi_2(\MT{SO}{2})  \to \pi_2(\gX_r) \to \pi_1(\MT{Spin^r}{2}) \to 0$$
shows that this connecting homomorphism is surjective with kernel $\bZ/(r^2U_r/12)$. Thus the Pontrjagin dual map is injective with cokernel $\bZ/(r^2U_r/12)$, so there is an exact sequence
$$0 \to H^2(\Omega^\infty_0\MT{SO}{2};\bZ) \to H^2(\Omega^\infty_0\MT{Spin^r}{2};\bZ)/\mathrm{torsion} \to \bZ/(r^2U_r/12) \to 0,$$
and hence $\lambda$ is divisible by precisely $r^2U_r/12$ in the torsion-free quotient.

The remaining divisibilities follow once we establish the rational proportionalities between the classes $\{\kappa_1^{a/r}, \lambda^{a/r}, \mu\}$ and $\lambda$. Firstly $\kappa_1^{a/r} = \tfrac{1}{a^2} \kappa_1 = \tfrac{12}{a^2} \lambda$. In order to relate $\lambda^{-a/r}$ to the other classes, we apply the Chern character:
$$\mathrm{ch}(\pi_!(L^a)) = \pi_!(\mathrm{ch}(L^a) \cdot \Td(T_v)) = \pi_!\left(e^{ac_1(L)}\cdot \frac{c_1(T_v)}{1-e^{-c_1(T_v)}}\right)$$
and note that $c_1(T_v) = r \cdot c_1(L)$. Thus
\begin{equation}\label{eq:ProportionalityLambdaClasses}
\lambda^{-a/r} = \frac{r^2 +6ar + 6a^2}{r^2}\lambda \in H^2(\gM_g^{1/r};\bQ).
\end{equation}
Finally we have that $2\mu = \lambda^{-1/2} + 12\lambda^{1/2}$. If we let $g$ be a generator of the torsion-free quotient such that $\lambda = \tfrac{r^2U_r}{12}g$, then 
\begin{eqnarray*}
\kappa_1^{a/r} &=& a^2U_rg\\
\lambda^{a/r} &=& \frac{U_r}{12}(r^2 - 6ar+6a^2)g\\
\mu &=& -\frac{U_r}{48}r^2 g \quad\quad \text{when it is defined.}
\end{eqnarray*}
\end{proof}

\subsection{Torsion classes}

In the introduction we have defined torsion classes $t^{a/r, b/r}$, $t^{a/r}$ and $t$ in the cohomology of $\gM_g^{1/r}$ as certain linear combinations of $\lambda^{a/r}$, $\kappa_1^{1/r}$ and $\mu$ which vanish in the torsion-free quotient. We recall: if $U_{a/r, b/r} = \gcd(r^2-6ar+6a^2, r^2-6br+6b^2)$ then
$$t^{a/r, b/r} := \frac{r^2-6br+6b^2}{U_{a/r, b/r}} \lambda^{a/r} - \frac{r^2-6ar+6a^2}{U_{a/r, b/r}}\lambda^{b/r}$$
and
$$t^{a/r} := \frac{12}{\gcd(12, r^2-6ar+6a^2)}\lambda^{a/r} - \frac{r^2-6ar+6a^2}{\gcd(12, r^2-6ar+6a^2)} \kappa_1^{1/r}.$$
are torsion. When $r$ is even, there is also the torsion class
$$t := \frac{48}{\gcd(r^2, 48)}\mu + \frac{r^2}{\gcd(r^2,48)}\kappa_1^{1/r}.$$

Although we have calculated $H^2(\gM_g^{1/r}[\epsilon];\bZ)$ as an abstract group, we do not yet have a way of determining whether the torsion classes we have constructed are non-zero or not. The main result of this section is a detection theorem which will allow us to identify torsion classes.

\begin{lem}\label{lem:Detecting}
There is a natural homomorphism
$$i^*: H^2(\Omega^\infty_0 \MT{Spin^r}{2};\bZ) \lra H^2(\Omega^2_0 Q(S^0);\bZ)$$
which is injective when restricted to the torsion subgroup.
\end{lem}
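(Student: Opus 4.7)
The plan is to realise $i^*$ as the map induced by the bottom-cell inclusion of the spectrum $\MT{Spin^r}{2}$, and then to reduce the injectivity on torsion to a surjectivity statement on $\pi_1$ via Pontryagin duality. Since $\MT{Spin^r}{2}$ is $(-3)$-connective and $H_{-2}(\MT{Spin^r}{2};\bZ) \cong \bZ$ by the Thom isomorphism, Hurewicz identifies $\pi_{-2}(\MT{Spin^r}{2}) \cong \bZ$, and a generator gives a spectrum map
$$i : \Sigma^{-2}\gS \lra \MT{Spin^r}{2}.$$
Applying $\Omega^\infty$ and using $\Omega^\infty \Sigma^{-2}\gS = \Omega^2 Q(S^0)$ produces a map of basepoint components, and I take $i^*$ to be the induced homomorphism on $H^2(-;\bZ)$.

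Next I would compute the effect of $i_*$ on $\pi_1$. Let $C$ denote the cofibre of $i$. The integral homology of $\MT{Spin^r}{2}$ is $\bZ$ in each even degree $\geq -2$ and $0$ otherwise, and by construction $i_*$ is an isomorphism in degree $-2$. The long exact sequence in homology then gives $H_n(C;\bZ) = 0$ for $n \leq -1$, $H_0(C;\bZ)=\bZ$, $H_1(C;\bZ)=0$ and $H_{2k}(C;\bZ) = \bZ$ for $k\geq 1$. Thus $C$ is $(-1)$-connective with $\pi_0(C) = \bZ$. Considering the Postnikov fibre sequence $F\to C \to H\bZ$, the induced homology long exact sequence yields $H_0(F;\bZ) = H_1(F;\bZ) = 0$, so $F$ is $1$-connective and by Hurewicz $\pi_1(C) = \pi_1(F) = H_1(F;\bZ) = 0$. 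The long exact sequence on homotopy now gives
$$\pi_1(\Sigma^{-2}\gS) = \pi_3^s \xrightarrow{\ i_*\ } \pi_1(\MT{Spin^r}{2}) \lra \pi_1(C) = 0,$$
so $i_*$ is surjective.

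Finally, for any spectrum $X$ with finite $\pi_1(X)$, the Hurewicz theorem gives $H_1(\Omega^\infty_0 X;\bZ)\cong \pi_1(X)$ and the universal coefficient theorem then identifies
$$\mathrm{Tors}\, H^2(\Omega^\infty_0 X;\bZ) \cong \mathrm{Ext}^1(\pi_1 X, \bZ) \cong \mathrm{Hom}(\pi_1 X, \bQ/\bZ),$$
naturally in $X$. Both $\pi_3^s = \bZ/24$ and $\pi_1(\MT{Spin^r}{2})$ (computed in Section 3) are finite, so applying the identification to $i$ shows that the restriction of $i^*$ to torsion is the Pontryagin dual of the surjection $i_*$, and is therefore injective. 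The only non-formal step in this plan is the computation $\pi_1(C)=0$, which is where the bottom-cell structure of $\MT{Spin^r}{2}$ really plays a role; with the homology of $C$ in hand, this follows at once from the Postnikov truncation argument.
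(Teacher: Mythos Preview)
Your overall strategy---reduce to surjectivity of $i_*:\pi_3^s \to \pi_1(\MT{Spin^r}{2})$ and then Pontrjagin-dualise---is exactly the paper's, but the step you flag as ``the only non-formal step'' is where the argument breaks. You claim that the homology long exact sequence for the fibre sequence $F \to C \to H\bZ$ gives $H_1(F;\bZ)=0$. The relevant portion is
\[
H_2(C;\bZ)\;\longrightarrow\; H_2(H\bZ;\bZ)\;\longrightarrow\; H_1(F;\bZ)\;\longrightarrow\; H_1(C;\bZ)=0,
\]
so you need the first map to be surjective. But $H_2(H\bZ;\bZ)=\bZ/2$, not $0$: stably this is $H_{n+2}(K(\bZ,n);\bZ)=\bZ/2$, dual to the operation $\beta Sq^2$. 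Whether $H_2(C;\bZ)=\bZ \to \bZ/2$ is surjective is detected by $Sq^2$ on the generator of $H^0(C;\bF_2)$, i.e.\ on $x\cdot u_{-2}\in H^0(\MT{Spin^r}{2};\bF_2)$. A direct computation using $Sq^2 u_{-2}=w_2(-\gamma^r)\cdot u_{-2}$ gives
\[
Sq^2(x\cdot u_{-2}) \;=\; x^2\cdot u_{-2} + r\,x^2\cdot u_{-2} \;=\; (1+r)\,x^2\cdot u_{-2},
\]
which vanishes precisely when $r$ is odd. Hence for $r$ odd the map $H_2(C)\to H_2(H\bZ)$ is zero, $H_1(F;\bZ)=\bZ/2$, and by Hurewicz $\pi_1(C)=\pi_1(F)=\bZ/2$, not $0$. (One can also see this directly from the long exact sequence on homotopy: for $r$ odd $\pi_0(\MT{Spin^r}{2})=\bZ$, so the map $\pi_0(\gS^{-2})=\bZ/2 \to \pi_0(\MT{Spin^r}{2})$ is zero and $\pi_1(C)$ surjects onto $\bZ/2$.)

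The lemma itself is still true for odd $r$, because $\pi_1(\MT{Spin^r}{2})$ is then a $3$-group while $\pi_1(C)$ is a $2$-group, so the connecting map $\pi_1(\MT{Spin^r}{2})\to\pi_1(C)$ vanishes anyway. But establishing this requires the actual computation of $\pi_1(\MT{Spin^r}{2})$ from Section~3, which is precisely how the paper argues: it reads off from the Adams charts that every class in $\pi_1(\MT{Spin^r}{2})$ is a $\pi_*(\gS)$-multiple of the bottom cell. Your Postnikov-truncation argument, as written, is not a substitute for that input.
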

\begin{proof}
Consider the map of spectra
$$i: \gS^{-2} \lra \MT{Spin^r}{2}$$
given by the inclusion of the $-2$-cell. Consulting the Adams spectral sequence charts for $\MT{Spin^r}{2}$ in all possible cases, we see that $\pi_1(\MT{Spin^r}{2})$ is obtained entirely from $\pi_{-2}(\MT{Spin^r}{2})$ via the $\pi_*(\gS)$-module structure. This implies that $\pi_1(\gS^{-2}) \to \pi_1(\MT{Spin^r}{2})$ is surjective, so taking infinite loop spaces the homomorphism $H_1(\Omega^2_0 Q(S^0);\bZ) \to H_1(\Omega^\infty_0\MT{Spin^r}{2};\bZ)$ is surjective, and the claim follows by Pontrjagin duality.
\end{proof}

Thus we may test the non-triviality of torsion elements by computing them on $\Omega^2_0 Q(S^0)$.

\begin{prop}\label{prop:ClassesOnQS0}
The classes $i^*\lambda^{a/r}$ are all equal, and so in particular all equal to $i^*\lambda$; this class is twice a generator of $H^2(\Omega^2_0QS^0;\bZ) \cong \bZ/24$. When $\mu$ is defined, $i^*\mu$ is a generator of $H^2(\Omega^2_0QS^0;\bZ) \cong \bZ/24$. The classes $i^*\kappa_1^{a/r}$ are all zero.
\end{prop}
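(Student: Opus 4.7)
The strategy is to restrict every class along the bottom-cell inclusion $i: \gS^{-2} \to \MT{Spin^r}{2}$ of Lemma~\ref{lem:Detecting}, and then identify the resulting classes in $H^2(\Omega^\infty \gS^{-2};\bZ) = H^2(\Omega^2_0 QS^0;\bZ) \cong \bZ/24$. The map $i$ is the Thom spectrification of the inclusion $* \hookrightarrow B\Spin^r(2)$ of a basepoint, so $i^*L$ is the trivial line bundle, the $K$-theory Thom class $i^*\lambda_{-\gamma^r}$ is the Bott class $\beta$ generating $K^0(\gS^{-2}) \cong \bZ$, and (when $r$ is even) the $KO$-theory Thom class pulls back to the generator $1 \in KO^{-2}(\gS^{-2}) \cong KO^0 \cong \bZ$.

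Naturality of the cohomology suspension, $(\Omega^\infty i)^*\sigma^* = \sigma^* i^*$ (in ordinary cohomology, $K$-theory, and $KO$-theory), has three consequences. First, $i^*\kappa_1^{a/r} = \sigma^*(a^2\, c_1(i^*L)^2 \cdot i^* u_{-2}) = 0$, since $c_1(i^*L)=0$. Second, $i^*\lambda^{-a/r} = c_1\bigl(\sigma_K^*\bigl((i^*L)^a \cdot i^*\lambda_{-\gamma^r}\bigr)\bigr) = c_1(\sigma_K^*\beta)$ is independent of $a$; call this common class $\ell$. Third, when $r$ is even, $i^*\mu = \tfrac{c_1}{2}(\sigma_{KO}^*(1)) + 6\ell$.

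It remains to identify $\ell$ and $\tfrac{c_1}{2}(\sigma_{KO}^*(1))$ inside $H^2(\Omega^2_0 QS^0;\bZ) \cong \bZ/24$. Since $\Omega^2_0 QS^0$ is connected with abelian fundamental group $\pi_1 \cong \pi_3^s \cong \bZ/24$, Hurewicz gives $H_1(\Omega^2_0 QS^0) \cong \bZ/24$, and the universal coefficient theorem identifies $H^2(\Omega^2_0 QS^0;\bZ) \cong \bZ/24$ with $\mathrm{Hom}(\pi_3^s,\bQ/\bZ)$ via the Bockstein. Under this identification, $\ell$ corresponds to the complex Adams $e$-invariant $e_\bC : \pi_3^s \to \bQ/\bZ$, and $\tfrac{c_1}{2}(\sigma_{KO}^*(1))$ corresponds to the real Adams $e$-invariant $e_\bR$, because these invariants are by definition extracted from the Chern character of a lift of the unit class in $K^0$ (resp.\ $KO^0$) of the mapping cone $C_\alpha$ of a stable map $\alpha$, and this is precisely what the first Chern class of the cohomology suspension of the Bott class computes on $\pi_1$. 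Classically $e_\bC(\nu) = \tfrac{1}{12}$ and $e_\bR(\nu) = \tfrac{1}{24}$ on the generator $\nu$ of $\pi_3^s$, hence $\ell$ is twice a generator of $\bZ/24$ and $\tfrac{c_1}{2}(\sigma_{KO}^*(1))$ is a generator; combined with the formula above, $i^*\mu$ is then also a generator of $\bZ/24$.

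The main technical step is the identification of $\ell$ and $\tfrac{c_1}{2}(\sigma_{KO}^*(1))$ with the Adams $e$-invariants, which requires matching the cohomology-suspension formulation used here with the mapping-cone formulation in Adams' definition. A self-contained alternative is to read the same information off the Adams spectral sequence for $\gS^{-2}$ at the primes $2$ and $3$, whose relevant entries are visible from the charts already drawn in Section~3; the $\pi_*(\gS)$-module action there pins down how the class in degree $-2$ generates the class in degree $1$, and Lemma~\ref{lem:Detecting} together with the cell-level description of $\ell$ and $\tfrac{c_1}{2}(\sigma_{KO}^*(1))$ yields the claim.
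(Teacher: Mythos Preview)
Your argument for the elementary parts---that $i^*L$ is trivial so all $i^*\lambda^{a/r}$ coincide, and that $i^*(c_1(L^a)^2 \cdot u_{-2})$ lies in $H^2(\gS^{-2};\bZ)=0$ so $i^*\kappa_1^{a/r}=0$---is correct and essentially identical to the paper's.

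For the main computation you take a genuinely different route. The paper proves that $i^*\lambda$ is twice a generator by a direct homological calculation deferred to Appendix~\ref{sec:Appendix:Calculation}: the class is identified with $\Omega^2$ of the pullback of $c_2$ along the unit $Q_0S^0 \to \bZ \times BU$; this pullback is computed explicitly in $\bF_2$- and $\bF_3$-homology via the Dyer--Lashof action, and is then traced through loop-space Serre spectral sequences at the primes $2$ and $3$ separately. Your proposal to instead identify $\ell$ with the complex Adams $e$-invariant $e_\bC$ and quote $e_\bC(\nu)=\tfrac{1}{12}$ is correct and considerably more economical---it replaces an entire appendix with a citation. As you acknowledge, the translation between the cohomology-suspension description of $\ell$ and Adams' mapping-cone definition of $e_\bC$ still needs to be made precise; this is standard, but a reference or a short argument would be required to make the proof complete.

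For $\mu$ the paper's argument is simpler than yours and avoids $e_\bR$ altogether: from $2\mu=\lambda^{-1/2}+12\lambda^{1/2}$ one gets $2\,i^*\mu = i^*\lambda^{-1/2}+12\,i^*\lambda^{1/2}$, and since $i^*\lambda^{1/2}=\ell$ is twice a generator, $12\,i^*\lambda^{1/2}=0$ in $\bZ/24$; hence $2\,i^*\mu$ is twice a generator, so $i^*\mu$ is a generator. You could adopt this and drop the real $e$-invariant from your plan.
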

\begin{proof}
The class $i^*\lambda^{a/r}$ is the first Chern class of the K-theory class 
$$\gS^{-2} \overset{i}\lra \MT{Spin^r}{2} \overset{L^a \cdot \lambda_{-\gamma^{r}}}\lra \mathbf{K},$$
which may be obtained using the Thom isomorphism in K-theory of the virtual bundle $-\bC \to *$ applied to the pullback of the line bundle $L^a \to B\Spin^r(2)$ to a point: this is the trivial line bundle over a point for all $a$, which proves the first assertion. More precisely, considered as an element of $\pi_{-2}(\gK)$ this is $\beta^{-1}$, the inverse of the Bott element. In Appendix \ref{sec:Appendix:Calculation}, we compute the effect of the map
$$H^2(BU;\bZ) \cong_{Bott} H^2(\Omega^2_0 (\bZ \times BU);\bZ) \overset{\Omega^2_0(unit)}\lra H^2(\Omega^2_0 QS^0;\bZ) \cong \bZ/24$$
and show it sends the first Chern class to twice a generator.

The statement about $\mu$ is immediate, as $2\mu = \lambda^{-1/2} + 12\lambda^{1/2}$ and $12i^*(\lambda^{1/2})=0$ so $2i^*(\mu) = i^*(\lambda^{-1/2})$ is twice a generator, so $i^*(\mu)$ is a generator. The statement about $\kappa_1^{a/r}$ follows because $i^*\sigma^*(c_1(L^a)^2 \cdot u_{-2}) = \sigma^*(i^*(c_1(L^a)^2\cdot u_{-2}))$ but $i^*(c_1(L^a)^2\cdot u_{-2}) \in H^2(\gS^{-2};\bZ)=0$.
\end{proof}

\begin{proof}[Proof of Theorem \ref{thm:TorsionDetection}]
When $g \geq 9$, the canonical map
$$\alpha_g^* : H^2(\Omega_{2-2g, \epsilon}^\infty \MT{Spin^r}{2};\bZ) \lra H^2(\mathcal{M}^{1/r}(\Sigma_g)[\epsilon];\bZ) \cong H^2(\gM_g^{1/r}[\epsilon];\bZ)$$
is an isomorphism, and the equivalence $\Omega_{2-2g, \epsilon}^\infty \MT{Spin^r}{2} \simeq \Omega_0^\infty \MT{Spin^r}{2}$ is canonical up to homotopy too. Thus the map of Lemma \ref{lem:Detecting} provides a canonical map
$$\bar{\varphi} : H^2(\gM_g^{1/r}[\epsilon];\bZ) \lra H^2(\Omega^2_0 QS^0 ;\bZ)$$
which is injective on the torsion subgroup. By the universal coefficient theorem, this last group is naturally isomorphic to $\mathrm{Ext}(\pi_3^s, \bZ)$, which is un-naturally isomorphic to $\bZ/24$.

When $r=2$ the class $\bar{\varphi}(\mu) \in H^2(\Omega^2_0 QS^0 ;\bZ)$ is a generator by Proposition \ref{prop:ClassesOnQS0} and so describes an isomorphism $H^2(\Omega^2_0 QS^0 ;\bZ) \cong \bZ/24$ under which $\bar{\varphi}(\mu)$ goes to 1. Let us use this as our standard identification of $H^2(\Omega^2_0 QS^0 ;\bZ)$ with $\bZ/24$, giving a map
$$\varphi : H^2(\gM_g^{1/r}[\epsilon];\bZ) \lra \bZ/24.$$
It remains to establish the effect of this map on the elements $\lambda^{a/r}$, $\kappa_1^{a/r}$ and $\mu$ when it is defined. By Proposition \ref{prop:ClassesOnQS0} all the $\kappa_1^{a/r}$ are sent to zero, and all the $\lambda^{a/r}$ are sent to the same thing. As $2\mu = \lambda^{-1/2}+12\lambda^{1/2}$ and $12\varphi(\lambda^{1/2})=0$, it is enough to check where $\mu$ is sent when $r$ is even, and where $\lambda$ is sent when $r$ is odd. For this we make use of the following diagram:
\begin{diagram}
\Omega^2_0 QS^0 & \rTo & \Omega^\infty_0\MT{Spin^{2r}}{2} & \rTo & \Omega^\infty_0\MT{Spin^{r}}{2}\\
& \rdTo & \dTo\\
& & \Omega^\infty_0\MT{Spin^{2}}{2}
\end{diagram}
By definition of the isomorphism $H^2(\Omega^2_0 QS^0 ;\bZ) \cong \bZ/24$, the diagonal map sends $\mu$ to $1 \in \bZ/24$ and hence $\lambda^{a/r}$ to $2 \in \bZ/24$. By naturality the same is true of the top left map, as $\mu$ is pulled back to a class of the same name here. Naturality of $\mu$ and $\lambda^{a/r}$ with respect to the top right map establishes the theorem.
\end{proof}

Using the formul{\ae} for the torsion elements $t^{a/r}$ and $t$, and the fact that we are able to compute them pulled back to $\Omega^2_0 QS^0$, we establish Corollary \ref{cor:TorsionGenerators}.

\begin{proof}[Proof of Corollary \ref{cor:TorsionGenerators}]
We use the formul{\ae} of Theorem \ref{thm:TorsionDetection} for $\varphi$.
Suppose $r$ is odd, and $3 \divides r$ (as otherwise there is nothing to show). We have that $\varphi(t^{a/r})$ is $24/\gcd(12, r^2-6ar+6a^2)$ in $\bZ/24$. The $\gcd$ is precisely 3 in this case, so we get $8$, and $\varphi$ is injective when restricted to the torsion subgroup, so we are done.

Suppose $r \equiv 2 \,\,\mathrm{mod}\,\, 4$. We have that $\varphi(t^{0/r})$ is $24/\gcd(12, r^2)$ in $\bZ/24$, which is $2$ if $3 \divides r$ or $6$ if $3 \ndivides r$. As $\varphi$ is injective when restricted to the torsion subgroup, we are done.

Suppose $r \equiv 0 \,\,\mathrm{mod}\,\, 4$. We have that $\varphi(t)$ is $48 / \gcd(48,r^2)$ in $\bZ/24$, which is $1$ if $3 \divides r$ or $3$ if $3 \ndivides r$. As $\varphi$ is injective when restricted to the torsion subgroup, we are done.
\end{proof}

\subsection{Twists of $r$-Spin structures}

Let $\pi : E \to B$ be a $r$-Spin $\Sigma_g$-bundle, that is, an oriented surface bundle of genus $g$ with a complex line bundle $L \to E$ and an isomorphism $\varphi : L^{\otimes r} \cong T_vE$. Suppose that $D \to B$ is a complex line bundle equipped with an isomorphism $D^{\otimes r} \cong \epsilon^1$. Such bundles are classified up to isomorphism by elements of $H^1(B;\bZ/r)$, and we also write $D$ for the cohomology class it represents. We can produce a new $r$-Spin structure on $\pi$ by $L_D := L \otimes \pi^*D$, as then $L_D^{\otimes r} \cong L^{\otimes r} \otimes \pi^*D^{\otimes r} \cong T_vE \otimes \epsilon^1$.

We give here a formularium for computing the characteristic classes $\kappa_1^{a/r}$, $\lambda^{a/r}$ and $\mu$ of the $r$-Spin structure $L_D$ in terms of those of $L$ and the class $D \in H^1(B;\bZ/r)$.

\begin{thm}\label{thm:Twists}
There are equalities
\begin{eqnarray*}
\kappa_1^{a/r}(L_D) &=& \kappa_1^{a/r}(L) + \frac{2a^2\chi(\Sigma_g)}{r}\beta(D)\\
\lambda^{-a/r}(L_D) &=& \lambda^{-a/r}(L)\\
\mu(L_D) &=& \mu(L) + \mathrm{Arf}(L)\tfrac{r}{2} \beta(D)
\end{eqnarray*}
in $H^2(B;\bZ)$.
\end{thm}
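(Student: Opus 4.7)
The overall strategy is to compute each of the three differences using the appropriate fibre-integration formula: in ordinary cohomology for $\kappa_1^{a/r}$, in complex $K$-theory for $\lambda^{-a/r}$, and in real $K$-theory for $\mu$. All three computations begin from $c_1(L_D) = c_1(L) + \pi^{*}\beta(D) \in H^2(E;\bZ)$, combined with the standard pushforward identities $\pi_!(1) = 0$ and $\pi_!(c_1(T_v)) = \chi(\Sigma_g)$; the latter together with $rc_1(L) = c_1(T_v)$ yields the key relation $\pi_!(c_1(L)) = \chi(\Sigma_g)/r$.

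For the $\kappa$-formula the computation is direct: expand $c_1(L_D^{\otimes a})^{2} = (ac_1(L) + a\pi^{*}\beta(D))^{2}$ and apply $\pi_!$ using the projection formula $\pi_!(\pi^{*}x \cdot y) = x \cdot \pi_!(y)$. The three resulting terms contribute $\kappa_1^{a/r}(L)$, $2a^{2}(\chi(\Sigma_g)/r)\beta(D)$, and $0$ respectively, giving the first equation.

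For the $\lambda$-formula I would invoke the $K$-theoretic projection formula $\pi_!^{K}(L_D^{\otimes a}) = \pi_!^{K}(L^{\otimes a}) \otimes D^{\otimes a}$ combined with $c_1(V \otimes M) = c_1(V) + \mathrm{rk}(V)c_1(M)$ for a line bundle $M$. Equivalently, applying Grothendieck--Riemann--Roch to $\mathrm{ch}(L_D^{\otimes a}) = e^{a\pi^{*}\beta(D)} \cdot e^{ac_1(L)}$ and pulling the first factor through $\pi_!$ (it is pulled back from the base) gives $\mathrm{ch}(\pi_!^{K}(L_D^{\otimes a})) = e^{a\beta(D)} \cdot \mathrm{ch}(\pi_!^{K}(L^{\otimes a}))$, from which the degree-$2$ component yields the claimed identity.

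The $\mu$-formula is where I expect the main difficulty to lie. Since $\xi = \pi_!^{KO}(1)$ is defined using the Spin structure on $T_v$ induced by $L^{\otimes r/2}$, the twist by $D$ alters this Spin structure by the $2$-torsion line bundle $\pi^{*}D^{\otimes r/2}$ on $E$, and one must quantify how $\pi_!^{KO}(1)$ responds and then trace the result through the canonical half-Chern class chosen for $KO^{-2}$-elements with trivialised underlying real bundle. My plan is to reduce fibrewise and exploit the identification $\Omega_2^{\Spin} \cong \bZ/2$ via the Arf invariant: this identifies the fibrewise correction with $\mathrm{Arf}(L)$, while the factor $r/2$ records that the Spin twist is by the $(r/2)$-th power of $D$. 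A useful consistency check is that the identity $2\mu = \lambda^{-1/2} + 12\lambda^{1/2}$, together with the $\lambda$-invariance already established, forces $2(\mu(L_D)-\mu(L)) = 0$; so the $\mu$-correction is at most $2$-torsion, and the Arf calculation pins down exactly which $2$-torsion class it is.
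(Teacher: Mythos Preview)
Your overall strategy coincides with the paper's: treat each of the three equations via the appropriate projection/pushforward formula. The $\kappa_1^{a/r}$ argument is complete and essentially identical to the paper's.

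For $\lambda^{-a/r}$ you correctly invoke the $K$-theoretic projection formula $\pi_!^K(L_D^{\otimes a}) = \pi_!^K(L^{\otimes a}) \otimes D^{\otimes a}$ and the identity $c_1(V\otimes M) = c_1(V) + \mathrm{rk}(V)\,c_1(M)$, but you then simply assert that ``the degree-$2$ component yields the claimed identity'' without addressing the correction term $\mathrm{rk}\bigl(\pi_!^K(L^{\otimes a})\bigr)\cdot a\,\beta(D)$. This term is the entire content of the second equation, and the paper does not leave it implicit: it computes the rank via Riemann--Roch and invokes $r\mid\chi(\Sigma_g)$ together with $r\,\beta(D)=0$ to argue it vanishes. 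Your GRR reformulation does not sidestep this: the degree-$2$ part of $e^{a\beta(D)}\cdot\mathrm{ch}\bigl(\pi_!^K(L^{\otimes a})\bigr)$ is $\lambda^{-a/r}(L) + \mathrm{rk}\cdot a\,\beta(D)$, and you still owe an argument for why the second summand is zero.

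For $\mu$ your plan is considerably vaguer than the paper's proof and does not isolate its mechanism. The paper does not ``reduce fibrewise'' via $\Omega_2^{\Spin}$; it instead writes down the $KO^{-2}$-identity
\[
\pi_!^{KO}(1)_{L_D^{\otimes r/2}} \;=\; D^{\otimes r/2}\otimes_{\bC}\pi_!^{KO}(1)_{L^{\otimes r/2}},
\]
which records how the $KO$-theory Thom class transforms when the Spin structure is twisted by the $2$-torsion line bundle $\pi^*D^{\otimes r/2}$. Applying $\tfrac{c_1}{2}$ to both sides and using $\dim_{\bC}\bigl(\pi_!^{KO}(1)_{L^{\otimes r/2}}\bigr)\equiv\mathrm{Arf}(L)\pmod 2$ gives the correction $\mathrm{Arf}(L)\,\tfrac{r}{2}\,\beta(D)$ directly; invariance of $\lambda^{1/2}$ (from the second equation) then finishes. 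Your consistency check that the difference is $2$-torsion is correct but, as you acknowledge, does not determine which $2$-torsion class appears; the missing ingredients are precisely this twisted-Thom-class identity and the identification of the mod-$2$ complex dimension with the Arf invariant.
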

\begin{proof}
The first equation follows easily from the definition $L_D = L \otimes \pi^*D$ and the fact that $c_1(D)$ is the Bockstein $\beta(D)$ when we consider $D$ as an element of $H^1(B;\bZ/r)$: so $c_1(L_D) = c_1(L) + \beta(D)$.

For the second equation, we calculate
$$\lambda^{-a/r}(L_D) = c_1(\pi_!(L_D^a)) = c_1(\pi_!(L^a \otimes \pi^*(D^a))) = c_1(\pi_!(L^a) \otimes \pi^*(D^a) )$$
and bear in mind that $\dim(\pi_!(L^a)) = (a + \tfrac{r}{2})\chi(\Sigma_g)$, and that $r \divides \chi(\Sigma_g)$.

The last equation is more complicated, and follows from the equality
$$\pi_!^{KO}(1)_{L_D^{\otimes r/2}} = D^{\otimes r/2} \otimes_\bC \pi_!^{KO}(1)_{L^{\otimes r/2}} \in KO^{-2}(B),$$
where $\pi_!^{KO}(-)_X$ denotes the pushforward in real $K$-theory defined using the Spin structure $X$. This equality comes from the formula for the $KO$-theory Thom class given by the Spin structure $L_D^{\otimes r/2}$, in terms of the Thom class for the Spin structure $L^{\otimes r/2}$. This shows that $\tfrac{c_1}{2}(\xi_D) = \frac{c_1}{2}(\xi) + \mathrm{Arf}(L)\tfrac{r}{2} \beta(D)$, noting that $\mathrm{dim}_\bC(\pi_!^{KO}(1)_{L^{\otimes r/2}}) \equiv \mathrm{Arf}(L) \,\, \mathrm{mod} \,\, 2$. The equation in the statement of the theorem is obtained by noting that $\lambda^{1/2}$ is unchanged under twisting the $r$-Spin structure.
\end{proof}

\section{Computations of the homology of $\widetilde{\gM}_g^{1/r}$}\label{sec:ThetaChar}

We can use the extension (\ref{eq:GroupExt}) and the characteristic class description of the elements of $H^2(\gM_g^{1/r};\bZ)$ to compute the low-dimensional cohomology of $\widetilde{\gM}_g^{1/r}$. In order to do so, we wish to compute the effect on first homology of the inclusion of the fibre in the fibration
\begin{equation}\label{eq:ExtensionFibration}
B\bZ/r \lra \mathcal{M}_g^{1/r}[\epsilon] \lra \widetilde{\mathcal{M}}_g^{1/r}[\epsilon].
\end{equation}
The map $H_1(B\bZ/r;\bZ) \to H_1(\mathcal{M}_g^{1/r}[\epsilon];\bZ)$ is Pontrjagin dual to the map
\begin{equation}\label{eq:TorsMap}
\mathrm{Tors} \, H^2(\mathcal{M}_g^{1/r}[\epsilon];\bZ) \lra H^2(B\bZ/r;\bZ) \cong \bZ/r,
\end{equation}
and so it suffices to compute the effect of this map.

By Corollary \ref{cor:TorsionGenerators}, we have generators for the torsion subgroup of $H^2(\mathcal{M}_g^{1/r}[\epsilon];\bZ)$ in terms of the elements $t^{a/r}$ and $t$. In order to calculate the images of these elements under the above map, we must compute the characteristic classes $\kappa_1^{a/r}$, $\lambda^{a/r}$ and $\mu$ on the $r$-Spin surface bundle classified by the map $B\bZ/r \to \mathcal{M}_g^{1/r}[\epsilon]$. The bundle this map classifies is, by definition, the twist of the trivial $r$-Spin bundle classified by the constant map $B\bZ/r \to \mathcal{M}_g^{1/r}[\epsilon]$ by the tautological class $x \in H^1(B\bZ/r;\bZ/r)$. Theorem \ref{thm:Twists} tells us how to compute the characteristic classes of this bundle: they are
\begin{eqnarray*}
\kappa_1^{a/r} &=& \frac{2a^2\chi(\Sigma_g)}{r}\beta(x)\\
\lambda^{a/r} &=& 0\\
\mu &=& \epsilon\tfrac{r}{2}\beta(x)
\end{eqnarray*}
when the class $\mu$ makes sense (i.e.\ when $r$ is even). Carrying out these computations using Corollary \ref{cor:TorsionGenerators} leads to the following description of the map (\ref{eq:TorsMap}).

\begin{prop}\label{prop:EffectOnH1}
If $r \equiv 2 \,\,\mathrm{mod}\,\, 4$ the map (\ref{eq:TorsMap}) is zero. If $r \equiv 0 \,\,\mathrm{mod}\,\, 4$ the map (\ref{eq:TorsMap}) has image $\frac{r(12\epsilon+\chi(\Sigma_g))}{8\gcd(r,3)}\bZ/r$. If $r$ is odd the map (\ref{eq:TorsMap}) is zero.
\end{prop}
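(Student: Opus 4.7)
The strategy is to reduce (\ref{eq:TorsMap}) to an explicit evaluation: pick generators of $\mathrm{Tors}\, H^2(\mathcal{M}_g^{1/r}[\epsilon];\bZ)$ from Corollary \ref{cor:TorsionGenerators} and evaluate them on the pulled-back $r$-Spin surface bundle over $B\bZ/r$ using the characteristic class formul{\ae} stated just above the proposition. Since the Bockstein $\beta : H^1(B\bZ/r;\bZ/r) \to H^2(B\bZ/r;\bZ)$ is an isomorphism, $\beta(x)$ generates $H^2(B\bZ/r;\bZ) \cong \bZ/r$, and the outputs of the evaluation will be explicit integer multiples of this generator, whose subgroup can then be compared to the stated answer.

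For $r$ odd I would argue as follows. If $3 \ndivides r$ the torsion group is already zero by Theorem \ref{thm:LowDimHomology}, and there is nothing to check. If $3 \divides r$, then $\gcd(12, r^2) = 3$, so $t^{0/r} = 4\lambda - \tfrac{r^2}{3}\kappa_1^{1/r}$; applying $\lambda \mapsto 0$ and $\kappa_1^{1/r} \mapsto \tfrac{2\chi(\Sigma_g)}{r}\beta(x)$ gives a pullback equal to $-\tfrac{2r\chi(\Sigma_g)}{3}\beta(x)$, which vanishes modulo $r$ because $3 \divides r$ and $r \divides \chi(\Sigma_g)$. For $r \equiv 2 \pmod 4$ the same generator $t^{0/r}$ is used; splitting on whether $3 \divides r$ to evaluate $\gcd(12,r^2) \in \{4, 12\}$ and writing $r = 2m$ with $m$ odd, one checks directly that the coefficient of $\beta(x)$ produced by the pullback is a multiple of $r$, so the image is again zero.

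For $r \equiv 0 \pmod 4$ the generator is $t = \tfrac{48}{\gcd(r^2,48)}\mu + \tfrac{r^2}{\gcd(r^2,48)}\kappa_1^{1/r}$. Substituting the displayed formul{\ae} $\mu \mapsto \epsilon\tfrac{r}{2}\beta(x)$ and $\kappa_1^{1/r} \mapsto \tfrac{2\chi(\Sigma_g)}{r}\beta(x)$ collapses the pullback of $t$ to $\tfrac{2r(12\epsilon+\chi(\Sigma_g))}{\gcd(r^2,48)}\beta(x)$. Since $16 \divides r^2$ one has $\gcd(r^2,48) = 16\gcd(r,3)$, and the pullback simplifies to $\tfrac{r(12\epsilon+\chi(\Sigma_g))}{8\gcd(r,3)}\beta(x)$, giving precisely the claimed image in $\bZ/r$.

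The only non-formal step is the arithmetic bookkeeping of $\gcd(12,r^2)$ and $\gcd(r^2,48)$, together with the verification that the resulting multiples of $\beta(x)$ are honest integer cohomology classes in each sub-case. This is mechanical, so I expect no real obstacle beyond careful case analysis; the conceptual content of the proposition is entirely carried by Corollary \ref{cor:TorsionGenerators}, Theorem \ref{thm:Twists}, and the Pontrjagin duality statement set up before the proposition.
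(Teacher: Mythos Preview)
Your proposal is correct and matches the paper's own proof essentially line for line: both arguments invoke Corollary~\ref{cor:TorsionGenerators} to choose a generator of the torsion subgroup, restrict it to $B\bZ/r$ via the formul{\ae} preceding the proposition, and do the arithmetic. The only cosmetic difference is that in the odd case the paper works with a general $t^{a/r}$ while you specialise to $t^{0/r}$; both are valid generators, so this changes nothing. One small remark: your final caveat about checking that the coefficients of $\beta(x)$ are integers is unnecessary, since these classes are pullbacks of integral cohomology classes along a map and are therefore automatically integral.
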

\begin{proof}
Recall the statement of Corollary \ref{cor:TorsionGenerators}: if $r$ is odd, any $t^{a/r}$ generates the torsion subgroup; if $r \equiv 2 \,\,\mathrm{mod}\,\, 4$ then $t^{0/r}$ generates the torsion subgroup; if $r \equiv 0 \,\,\mathrm{mod}\,\, 4$ then $t$ generates the torsion subgroup.

If $r$ is odd, the map (\ref{eq:TorsMap}) sends the generator $t^{a/r}$ to $2\chi(\Sigma_g) \cdot \tfrac{(r-3a)}{\gcd(r,3)}$, which is zero modulo $r$ as $r \divides \chi(\Sigma_g)$. If $r$ is even, the result follows by applying the formul{\ae} above to $t^{0/r}$ if $r \equiv 2 \,\,\mathrm{mod}\,\, 4$ or $t$ if $r \equiv 0 \,\,\mathrm{mod}\,\, 4$.
\end{proof}

From these formul{\ae} one can compute the kernel of the homomorphism (\ref{eq:TorsMap}), as it is cyclic and the above proposition determines its order. This group is then Pontrjagin dual to $H_1(\widetilde{\gM}_g^{1/r}[\epsilon];\bZ)$. As one can readily see from the statement of the above proposition, there will not be an especially pleasant formula for the order of this group, as for any fixed $r$ it varies with $\epsilon$ and $g$.


We can also use the formul{\ae} above to compute the second integral cohomology of $\widetilde{\gM}_g^{1/r}[\epsilon]$. Of course as an abstract group it is isomorphic to $\bZ \oplus H_1(\widetilde{\gM}_g^{1/r}[\epsilon];\bZ)$ and the calculation of first homology follows from Proposition \ref{prop:EffectOnH1}. However the formul{\ae} above also let us calculate the image of the injective map
$$H^2(\widetilde{\gM}_g^{1/r}[\epsilon];\bZ) \lra H^2(\gM_g^{1/r}[\epsilon];\bZ)$$
because the Leray--Serre spectral sequence for the fibration (\ref{eq:ExtensionFibration}) gives an exact sequence
$$0 \lra H^2(\widetilde{\gM}_g^{1/r}[\epsilon];\bZ) \lra H^2(\gM_g^{1/r}[\epsilon];\bZ) \lra \bZ/r$$
and the last map is determined by the formul{\ae} above. As these are fairly complicated, it is difficult to give a general expression for the image of this map. However, we have calculated the following three examples. We refer to Examples \ref{ex:rEq2}--\ref{ex:rEq4} for presentations of the second cohomology of $\gM_g^{1/r}[\epsilon]$ in these cases.

\begin{example}\label{ex:ThetaChar2Spin}
The group $H^2(\widetilde{\gM}_g^{1/2}[\epsilon];\bZ)$ is isomorphic to $\bZ \oplus \bZ/4$ for $g \geq 9$, but the map to $H^2(\gM_g^{1/2}[\epsilon];\bZ)$ is an isomorphism if $\epsilon=0$ and an injection onto the index 2 subgroup $\langle \lambda, 2\mu \,\, \vert \,\, 4(\lambda + 4\mu) \rangle$ if $\epsilon=1$.
\end{example}

\begin{example}
The group $H^2(\widetilde{\gM}_g^{1/3};\bZ)$ is isomorphic to $\bZ \oplus \bZ/3$ for $g \geq 9$, and the map to $H^2(\gM_g^{1/3};\bZ)$ is an isomorphism.
\end{example}

\begin{example}
The map from $H^2(\widetilde{\gM}_g^{1/4}[\epsilon];\bZ)$ to $H^2({\gM}_g^{1/4}[\epsilon];\bZ)$ is an isomorphism if $\epsilon=0$, and an injection on to the index 2 subgroup $\langle 2\mu, \lambda^{1/4} \,\, \vert \,\, 4(2\mu - 4\lambda^{1/4}) \rangle$ if $\epsilon=1$.
\end{example}

\section{Relating Picard groups to cohomology}

In this section we will define the necessary terms and prove Theorem \ref{thm:PicardMain} from the introduction.

\subsection{Topological Picard groups}
Recall from the introduction that the \emph{topological Picard group} $\mathrm{Pic}_{\top}(X \hcoker G)$ of a global quotient orbifold $X \hcoker G$ is the set of isomorphism classes of $G$-equivariant complex line bundles on $X$, which is an abelian group under tensor product of line bundles. The first Chern class provides a map
$$c_1 : \mathrm{Pic}_{\top}(X \hcoker G) \lra H^2(X \hcoker G;\bZ)$$
which is an isomorphism \cite[Lemma 5.1]{ERW10}. Thus the maps
\begin{equation*}
\mathrm{Pic}_{\top}(\widetilde{\gM}_g^{1/r}[\epsilon]) \overset{c_1}\lra H^2( \widetilde{\gM}_g^{1/r}[\epsilon];\bZ) \quad\quad \mathrm{Pic}_{\top}( \gM_g^{1/r}[\epsilon]) \overset{c_1}\lra H^2(\gM_g^{1/r}[\epsilon];\bZ)
\end{equation*}
are both isomorphisms. 

\subsection{Holomorphic Picard groups}

Recall from the introduction that if a group $\Gamma$ acts by biholomorphisms on a complex manifold $X$, the orbifold quotient $\gX := X\hcoker \Gamma$ has a complex structure and we may define the \emph{holomorphic Picard group} $\mathrm{Pic}_{\hol}(\gX)$ to be the set of isomorphism classes of $G$-equivariant holomorphic line bundles on $X$, which is an abelian group under tensor product of line bundles. Equivalently, if $\mathcal{O}_\gX$ is the sheaf of holomorphic functions on the orbifold $\gX$, and $\mathcal{O}^\times_\gX$ the subsheaf of nowhere zero functions, it is the sheaf cohomology group $H^1(\gX;\mathcal{O}^\times_\gX)$. The first Chern class provides a map
\begin{equation}\label{eq:HolChern}
c_1 : \mathrm{Pic}_{\hol}(\gX) \lra H^2(\gX;\bZ),
\end{equation}
which coincides with the connecting homomorphism for the exponential sequence of sheaves $0 \to \bZ \to \mathcal{O}_\gX \to \mathcal{O}^\times_\gX \to 0$ on $\gX$. With the identification of the topological Picard group in the previous section, the long exact sequence for the exponential sequence identifies the subgroup $\Pic_{\hol}^0(\gX) < \Pic_{\hol}(\gX)$ of topologically trivial bundles as the quotient
$$\Pic_{\hol}^0(\gX) = H^1(\gX;\mathcal{O}_\gX) / H^1(\gX;\bZ)$$
and so as a connected abelian topological group. The \emph{analytic Neron--Severi group} of $\gX$ is defined to be
$$\mathcal{NS}(\gX) := \Pic_{\hol}(\gX) / \Pic_{\hol}^0(\gX),$$
which is also the subgroup of $\Pic_{\top}(\gX)$ of those complex line bundles which admit a holomorphic structure. The first Chern class descends to an injective homomorphism $c_1 : \mathcal{NS}(\gX) \to H^2(\gX;\bZ)$.

\begin{prop}
The map $c_1 : \Pic_{\hol}(\gM_g^{1/r}[\epsilon]) \to H^2(\gM_g^{1/r}[\epsilon];\bZ)$ is surjective, so $\mathcal{NS}(\gM_g^{1/r}[\epsilon]) \to H^2(\gM_g^{1/r}[\epsilon];\bZ)$ is an isomorphism.
\end{prop}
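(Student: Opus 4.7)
The first step is to observe that the two assertions of the proposition are equivalent: by definition $\Pic_{\hol}(\gX) \twoheadrightarrow \mathcal{NS}(\gX)$, and the introduction has already noted that $\mathcal{NS}(\gX) \hookrightarrow H^2(\gX;\bZ)$ is injective. Hence once I know that $c_1 : \Pic_{\hol}(\gM_g^{1/r}[\epsilon]) \to H^2(\gM_g^{1/r}[\epsilon];\bZ)$ is surjective, the injection $\mathcal{NS}(\gM_g^{1/r}[\epsilon]) \hookrightarrow H^2(\gM_g^{1/r}[\epsilon];\bZ)$ is automatically surjective too, giving the stated isomorphism. So the task reduces to surjectivity of $c_1$.

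Second, I would invoke Corollaries \ref{cor:TorsionFreeGeneration} and \ref{cor:TorsionGenerators}, which together show that the abelian group $H^2(\gM_g^{1/r}[\epsilon];\bZ)$ is generated by the classes $\lambda^{a/r}$, $\kappa_1^{a/r}$ (for $a \in \bZ$), together with $\mu$ when $r$ is even. It therefore suffices to exhibit, on the complex orbifold $\gM_g^{1/r}[\epsilon]$, holomorphic line bundles whose topological first Chern classes realise each of these generators. The universal $r$-Spin family $\pi : \gC_g^{1/r}[\epsilon] \to \gM_g^{1/r}[\epsilon]$ is a proper smooth holomorphic submersion of relative complex dimension one, equipped with the holomorphic line bundle $L$ and the isomorphism $L^{\otimes r} \cong \omega_\pi^{-1}$, so everything we need is available in the holomorphic category on the total space.

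Third, I would construct holomorphic lifts of each generator. For $\lambda^{-a/r}$, the derived pushforwards $R^0\pi_* L^{\otimes a}$ and $R^1\pi_* L^{\otimes a}$ are holomorphic vector bundles (by cohomology and base change for the smooth proper family $\pi$), so the graded determinant $\det R\pi_* L^{\otimes a} := \det R^0\pi_* L^{\otimes a} \otimes (\det R^1\pi_* L^{\otimes a})^{-1}$ is a holomorphic line bundle whose first Chern class equals $c_1(\pi^K_! L^{\otimes a}) = \lambda^{-a/r}$. For $\kappa_1^{a/r}$ I would use the Deligne pairing, which assigns to any two holomorphic line bundles $\mathcal{L}_1, \mathcal{L}_2$ on $\gC_g^{1/r}[\epsilon]$ a holomorphic line bundle $\langle \mathcal{L}_1, \mathcal{L}_2 \rangle$ on the base whose first Chern class is $\pi_*(c_1(\mathcal{L}_1) \cdot c_1(\mathcal{L}_2))$; taking $\mathcal{L}_1 = \mathcal{L}_2 = L^{\otimes a}$ gives a holomorphic line bundle with Chern class $\kappa_1^{a/r}$. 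For $\mu$, which only arises when $r$ is even, one uses that $M := L^{\otimes r/2}$ is a relative theta characteristic and invokes the Pfaffian line bundle $\mathrm{Pf}(M)$, a canonical holomorphic square root of $\det R\pi_* M$; its first Chern class equals $\tfrac{1}{2}\lambda^{-1/2} = \tfrac{c_1}{2}(\xi)$, and so $\mu$ is realised by the holomorphic line bundle $\mathrm{Pf}(M) \otimes (\det R\pi_* L^{\otimes -r/2})^{\otimes 6}$.

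The main obstacle is the very last step: identifying the $KO$-theoretic half-Chern class $\tfrac{c_1}{2}(\xi)$, defined via a real pushforward on the Thom spectrum side, with the topological Chern class of the holomorphic Pfaffian line bundle $\mathrm{Pf}(M)$. Both are canonically associated to the same Spin structure on $T_v$, namely the one induced by $M = L^{\otimes r/2}$, so I would argue the comparison by showing that the two constructions agree universally on the classifying space $B\Spin^r(2)$ (where they both restrict to the same canonical square root of $\lambda^{-1/2}$), and then pulling back along the classifying map of the universal family. The Deligne pairing construction for $\kappa_1^{a/r}$ and the Pfaffian construction for $\mu$ are both standard but require some care on orbifolds; this can either be handled by passing to a finite étale cover by a scheme and descending, or by noting that all constructions are functorial for families and hence automatically $\Gamma_g^{1/r}(\zeta)$-equivariant on Teichm\"uller space.
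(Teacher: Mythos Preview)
Your approach is correct in spirit but far more laborious than necessary, and the paper's proof sidesteps exactly the obstacle you identify. The paper argues as follows: from the exponential sequence $0 \to \bZ \to \mathcal{O} \to \mathcal{O}^\times \to 0$, the cokernel of $c_1$ injects into the complex vector space $H^2(\gM_g^{1/r}[\epsilon];\mathcal{O})$ and is therefore torsion-free. Hence surjectivity of $c_1$ is a purely \emph{rational} question, and since $H^2(\gM_g^{1/r}[\epsilon];\bQ)$ is one-dimensional generated by $\lambda$, it suffices to realise $\lambda$ alone---which is immediate, as the Hodge bundle is pulled back from $\gM_g$. No Deligne pairing, no Pfaffian, no comparison of $KO$-theoretic and analytic half-Chern classes is needed.

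By contrast, your route requires holomorphically lifting \emph{every} generator, including $\mu$, which forces you into the delicate identification of $\tfrac{c_1}{2}(\xi)$ with $c_1$ of a Pfaffian line bundle. (Incidentally, your claim that $R^0\pi_* L^{\otimes a}$ and $R^1\pi_* L^{\otimes a}$ are separately vector bundles is false in general---their ranks jump---though the determinant of the complex is still a line bundle by Knudsen--Mumford.) Your strategy is essentially what the paper later has to do for the \emph{algebraic} Picard group, where the torsion-free trick is unavailable; there the paper does not use the Pfaffian but instead appeals to explicit divisors ($\Theta_{\mathrm{null}}$ for $\epsilon=0$, and $\pi_*(L^*)$ on the complement of a codimension-3 locus for $\epsilon=1$) to realise $\mu$. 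So your instinct is right for the harder algebraic statement, but for the holomorphic one the exponential-sequence argument is the key idea you are missing.
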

\begin{proof}
By the exponential sequence, the cokernel of $c_1$ is a subgroup of the complex vector space $H^2\left(\gM_g^{1/r}[\epsilon] ; \mathcal{O}_{\gM_g^{1/r}[\epsilon]} \right)$ and hence is torsion-free. Thus it is enough to see that $c_1$ is rationally surjective, but rationally the cohomology has rank 1 and is generated by the Hodge class $\lambda$, which is pulled back from a holomorphic (in fact algebraic) line bundle on $\gM_g$.
\end{proof}

There are homomorphisms $\Pic_{\top / \hol}(\gM_g^{1/r}[\epsilon]) \to \bZ/r$ given by evaluating a line bundle, holomorphic or topological, on the sub-orbifold $* \hcoker (\bZ/r)$. This gives a commutative diagram
\begin{diagram}
 & & \Pic_{\hol}(\widetilde{\gM}_g^{1/r}[\epsilon]) & \rTo & \Pic_{\hol}(\gM_g^{1/r}[\epsilon]) & \rTo & \bZ/r\\
& & \dTo^{c_1} & & \dTo^{c_1} & & \dEq\\
0 & \rTo & H^2(\widetilde{\gM}_g^{1/r}[\epsilon];\bZ) & \rTo & H^2(\gM_g^{1/r}[\epsilon];\bZ) & \rTo & \bZ/r &\\
\end{diagram}
with exact rows, as a holomorphic line bundle on $\gM_g^{1/r}[\epsilon]$ descends to $\widetilde{\gM}_g^{1/r}[\epsilon]$ precisely when it is trivial on $* \hcoker (\bZ/r)$. This implies

\begin{prop}
The map $c_1 : \Pic_{\hol}(\widetilde{\gM}_g^{1/r}[\epsilon]) \to H^2(\widetilde{\gM}_g^{1/r}[\epsilon];\bZ)$ is surjective, so $\mathcal{NS}(\widetilde{\gM}_g^{1/r}[\epsilon]) \to H^2(\widetilde{\gM}_g^{1/r}[\epsilon];\bZ)$ is an isomorphism.
\end{prop}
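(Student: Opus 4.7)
The plan is to deduce this from the surjectivity of $c_1$ on $\Pic_{\hol}(\gM_g^{1/r}[\epsilon])$ established in the preceding proposition, by a diagram chase in the commutative ladder with exact rows that the paper has just set up. Concretely, given $x \in H^2(\widetilde{\gM}_g^{1/r}[\epsilon];\bZ)$, I would first push it to its image $y \in H^2(\gM_g^{1/r}[\epsilon];\bZ)$; by the preceding proposition, $y = c_1(L)$ for some $L \in \Pic_{\hol}(\gM_g^{1/r}[\epsilon])$. Exactness of the bottom row forces $y$ to vanish in $\bZ/r$, and commutativity of the right-hand square (the right vertical map is the identity on $\bZ/r$) then forces the image of $L$ in $\bZ/r$ to vanish as well. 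By exactness of the top row, $L$ descends to a holomorphic line bundle $\widetilde{L}$ on $\widetilde{\gM}_g^{1/r}[\epsilon]$. Its first Chern class $c_1(\widetilde{L})$ pulls back to $c_1(L) = y$, and by the injectivity of the leftmost arrow in the bottom row we conclude $c_1(\widetilde{L}) = x$.

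The only input beyond formal diagram chasing is the commutativity of the right-hand square, which is really built into the definition of the evaluation maps: both the holomorphic and topological ``restrict to $* \hcoker \bZ/r$'' maps take a line bundle to the character by which $\bZ/r$ acts on the fibre over that sub-orbifold, and under the canonical identification $c_1 : H^2(B\bZ/r;\bZ) \cong \bZ/r$ the first Chern class of a $\bZ/r$-equivariant line bundle over a point is exactly that character. So the two ways around the square agree.

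The second sentence of the proposition is then formal: $\mathcal{NS}(\widetilde{\gM}_g^{1/r}[\epsilon])$ maps injectively into $H^2(\widetilde{\gM}_g^{1/r}[\epsilon];\bZ)$ by construction (it is the image of $\Pic_{\hol}$ in $\Pic_{\top} \cong H^2$), and surjectivity of $c_1$ on $\Pic_{\hol}$ gives surjectivity on $\mathcal{NS}$.

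I do not expect any real obstacle. The substantive work has already been done in the preceding proposition, which used the exponential sequence together with the fact that the rank-one torsion-free part of $H^2(\gM_g^{1/r}[\epsilon];\bZ)$ is generated by the Hodge class $\lambda$ (itself pulled back from an algebraic line bundle on $\gM_g$); the present statement is just a transfer of that fact from $\gM_g^{1/r}[\epsilon]$ to the $\bZ/r$-gerbe quotient $\widetilde{\gM}_g^{1/r}[\epsilon]$ via the exact sequences relating their Picard groups and second cohomologies.
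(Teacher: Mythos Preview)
Your proposal is correct and is precisely the diagram chase that the paper leaves implicit: the paper sets up the commutative ladder with exact rows and simply writes ``This implies'' the proposition, and you have spelled out the chase in full. The only extra content you add is the verification that the right-hand square commutes, which the paper takes for granted.
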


\subsection{Algebraic Picard groups}\label{sec:AlgPicGp}

Let us say a complex orbifold $\gX$ is \emph{quasi-projective} if it has a finite cover by a smooth quasi-projective variety. The generalised Riemann existence theorem implies that a finite cover of a quasi-projective variety is again a quasi-projective variety, so this notion does not depend on a choice of quasi-projective cover.

\begin{prop}
The complex orbifold $\widetilde{\gM}_g^{1/r}[\epsilon]$ is quasi-projective.
\end{prop}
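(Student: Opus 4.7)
The plan is to exhibit a torsion-free, finite-index subgroup $H \leq G_g^{1/r}(\zeta)$ such that $\mathcal{T}_g / H$ is a smooth quasi-projective complex variety; the natural map
$$\mathcal{T}_g / H \lra \mathcal{T}_g \hcoker G_g^{1/r}(\zeta) = \widetilde{\gM}_g^{1/r}[\epsilon]$$
will then be the required finite cover by a quasi-projective variety.

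First I would recall, as already noted after Lemma \ref{lem:Components}, that $G_g^{1/r}(\zeta)$ has finite index in $\Gamma_g$, being the stabiliser of an element of the finite set $\pi_0\Spin^r(\Sigma_g)$. Next, fix an integer $N \geq 3$ and let $\Gamma_g[N] := \ker(\Gamma_g \to \Sp_{2g}(\bZ/N))$ be the corresponding level subgroup; it is torsion-free and of finite index in $\Gamma_g$. The classical input I would appeal to is that the quotient $\gM_g[N] := \mathcal{T}_g / \Gamma_g[N]$, the moduli space of smooth genus $g$ curves with a full level $N$ structure, is a smooth quasi-projective variety over $\bC$ (via Mumford's geometric invariant theory construction).

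Setting $H := \Gamma_g[N] \cap G_g^{1/r}(\zeta)$ then produces a torsion-free subgroup of $G_g^{1/r}(\zeta)$ of finite index, and the induced map
$$\mathcal{T}_g / H \lra \mathcal{T}_g / \Gamma_g[N] = \gM_g[N]$$
is a finite étale cover of smooth complex manifolds. To conclude I would invoke the fact that a finite étale cover of a smooth quasi-projective complex variety is itself smooth and quasi-projective (by the Riemann existence theorem together with GAGA, or equivalently the generalised Riemann existence theorem already alluded to just before the statement). Hence $\mathcal{T}_g / H$ is a smooth quasi-projective variety, and by construction it maps to $\widetilde{\gM}_g^{1/r}[\epsilon]$ as a finite cover.

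There is no real obstacle here: the deepest ingredient is the classical quasi-projectivity of the level covers $\gM_g[N]$ for $N \geq 3$, which is standard; the rest is a routine intersection-of-finite-index-subgroups argument combined with a descent statement for quasi-projectivity under finite étale covers.
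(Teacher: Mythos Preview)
Your proof is correct and is essentially identical to the paper's: the paper takes the specific case $N=3$, intersects $G_g^{1/r}(\zeta)$ with $\Gamma_g[3]$, and observes that the resulting quotient of Teichm\"uller space is a finite unramified cover of $\mathcal{M}_g[3]$ and hence quasi-projective. Your version is just slightly more general in allowing any $N \geq 3$ and slightly more explicit about invoking the generalised Riemann existence theorem.
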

\begin{proof}
The intersection of $G_g^{1/r}(\zeta)$ with the third level subgroup $\Gamma_g[3] \lhd \Gamma_g$ acts freely on Teichm\"{u}ller space, with quotient a finite unramified cover of $\mathcal{M}_g[3]$, and hence a smooth quasi-projective variety.
\end{proof}

For quasi-projective orbifolds, there is yet another notion of Picard group available. The \emph{algebraic Picard group} $\Pic_{\alg}(\gX)$ is the set of isomorphism classes of holomorphic line bundles on $\gX$ which are algebraic on the quasi-projective cover of $\gX$. Alternatively, if $\overline{\mathcal{O}}^\times_\gX$ is the sheaf of nowhere zero holomorphic functions on $\gX$ which are algebraic on the quasi-projective cover, we may define
$$\Pic_{\alg}(\gX) := H^1(\gX; \overline{\mathcal{O}}_\gX^\times).$$

As in the holomorphic case, we define $\Pic^0_{\alg}(\gX)$ to be the subgroup of topologically trivial algebraic line bundles. The additional algebraic structure implies that $\Pic^0_{\alg}(\gX) =0$ if $H^1(\gX;\bZ)=0$, c.f.\ \cite[Theorem 14.3]{HainTransc}. In particular, this is the case for $\widetilde{\gM}_g^{1/r}[\epsilon]$, so

\begin{prop}\label{prop:InjAlgPicTilde}
The composition of maps
$$\Pic_{\alg}(\widetilde{\gM}_g^{1/r}[\epsilon]) \lra \Pic_{\hol}(\widetilde{\gM}_g^{1/r}[\epsilon]) \lra \mathcal{NS}(\widetilde{\gM}_g^{1/r}[\epsilon]) \overset{c_1}\lra H^2(\widetilde{\gM}_g^{1/r}[\epsilon];\bZ)$$
is injective. Moreover, the second map is surjective and the last is an isomorphism.
\end{prop}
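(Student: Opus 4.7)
The plan is to establish the three claims in the order in which they build on one another, exploiting the previous proposition and the vanishing of $H^1$ for $\widetilde{\gM}_g^{1/r}[\epsilon]$.

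First I would handle the two easier pieces. Surjectivity of the middle map $\Pic_{\hol}(\widetilde{\gM}_g^{1/r}[\epsilon]) \to \mathcal{NS}(\widetilde{\gM}_g^{1/r}[\epsilon])$ is tautological: by definition, $\mathcal{NS}$ is the quotient of $\Pic_{\hol}$ by $\Pic^0_{\hol}$, so the canonical projection is surjective. That the last map $c_1 : \mathcal{NS}(\widetilde{\gM}_g^{1/r}[\epsilon]) \to H^2(\widetilde{\gM}_g^{1/r}[\epsilon];\bZ)$ is an isomorphism is precisely the content of the preceding proposition: it is injective from the exponential exact sequence (this is built into the definition of $\mathcal{NS}$), and surjective because that proposition lifts the surjection $\Pic_{\hol} \twoheadrightarrow H^2$ through the quotient by $\Pic^0_{\hol}$.

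For injectivity of the full composition, observe that since $c_1 : \mathcal{NS} \to H^2$ is injective, the kernel of the composition equals the kernel of $\Pic_{\alg}(\widetilde{\gM}_g^{1/r}[\epsilon]) \to \mathcal{NS}(\widetilde{\gM}_g^{1/r}[\epsilon])$, which by definition is exactly $\Pic^0_{\alg}(\widetilde{\gM}_g^{1/r}[\epsilon])$, the subgroup of topologically trivial algebraic line bundles. Invoking Hain's theorem (cited just above), this group vanishes whenever $H^1(\widetilde{\gM}_g^{1/r}[\epsilon];\bZ) = 0$, so it suffices to verify this vanishing.

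To establish $H^1(\widetilde{\gM}_g^{1/r}[\epsilon];\bZ) = 0$, I would apply the Leray--Serre spectral sequence (equivalently, the five-term exact sequence) of the fibration (\ref{eq:ExtensionFibration}), namely $B\bZ/r \to \gM_g^{1/r}[\epsilon] \to \widetilde{\gM}_g^{1/r}[\epsilon]$. Since $H^1(B\bZ/r;\bZ) = 0$, the edge map yields an injection
\[
H^1(\widetilde{\gM}_g^{1/r}[\epsilon];\bZ) \hookrightarrow H^1(\gM_g^{1/r}[\epsilon];\bZ).
\]
By Theorem \ref{thm:LowDimHomology} (applicable since we are assuming $g \geq 9 \geq 6$), the group $H_1(\gM_g^{1/r}[\epsilon];\bZ)$ is finite, so the universal coefficient theorem gives $H^1(\gM_g^{1/r}[\epsilon];\bZ) = 0$, and hence $H^1(\widetilde{\gM}_g^{1/r}[\epsilon];\bZ) = 0$, completing the proof. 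The main substantive input is the cited Hain theorem on the vanishing of $\Pic^0_{\alg}$ for quasi-projective varieties with vanishing first Betti number; everything else is a straightforward assembly of preceding results in the paper.
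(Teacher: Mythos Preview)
Your proposal is correct and follows the same approach as the paper: the paper states this proposition as an immediate consequence of Hain's theorem (that $\Pic^0_{\alg}(\gX)=0$ when $H^1(\gX;\bZ)=0$), together with the preceding proposition identifying $\mathcal{NS}$ with $H^2$. The only difference is that the paper simply asserts ``this is the case for $\widetilde{\gM}_g^{1/r}[\epsilon]$'' without spelling out the vanishing of $H^1$, whereas you supply the explicit argument via the Leray--Serre sequence of the gerbe (\ref{eq:ExtensionFibration}) and Theorem~\ref{thm:LowDimHomology}; this is a welcome clarification rather than a departure.
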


If there is a $G$-gerbe $\gY \overset{\pi}\to \gX$ where $G$ is a compact abelian group and $\gX$ is a quasi-projective orbifold, we say that $\gY$ is a \emph{small extension of a quasi-projective orbifold.} In this case the map $\pi$ has local sections, and so there is a natural isomorphism $\mathcal{O}_\gY \cong \pi^{-1}\mathcal{O}_\gX$ of sheaves of holomorphic functions. Hence we may define the sub-sheaf of algebraic functions on $\gY$ by $\overline{\mathcal{O}}_\gY := \pi^{-1}\overline{\mathcal{O}}_\gX$, and the algebraic Picard group by $\Pic_{\alg}(\gY) := H^1(\gY; \overline{\mathcal{O}}_\gY^\times)$. In particular applying this to the $\bZ/r$-gerbe $\widetilde{\gM}_g^{1/r}[\epsilon] \to \gM_g^{1/r}[\epsilon]$ we may define $\Pic_{\alg}(\gM_g^{1/r}[\epsilon])$.

\begin{prop}
The composition of maps
$$\Pic_{\alg}({\gM}_g^{1/r}[\epsilon]) \lra \Pic_{\hol}({\gM}_g^{1/r}[\epsilon]) \lra \mathcal{NS}({\gM}_g^{1/r}[\epsilon]) \overset{c_1}\lra H^2({\gM}_g^{1/r}[\epsilon];\bZ)$$
is injective. Moreover, the second map is surjective and the last is an isomorphism.
\end{prop}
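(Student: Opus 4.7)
The plan is to exploit the fact that we have already established the analogous injectivity for the quasi-projective orbifold $\widetilde{\gM}_g^{1/r}[\epsilon]$ in Proposition \ref{prop:InjAlgPicTilde}, coupled with the fact that $\gM_g^{1/r}[\epsilon]$ is a $\bZ/r$-gerbe over it. First I would observe that the last two clauses of the statement were already established for $\gM_g^{1/r}[\epsilon]$ earlier in this section: the map $\Pic_{\hol} \to \mathcal{NS}$ is surjective by the very definition of $\mathcal{NS}$, and $\mathcal{NS} \to H^2$ was shown to be an isomorphism via the exponential sheaf sequence together with the fact that $H^2$ is rationally generated by the Hodge class. Thus all that remains is to prove that
$$c_1 : \Pic_{\alg}(\gM_g^{1/r}[\epsilon]) \lra H^2(\gM_g^{1/r}[\epsilon];\bZ)$$
is injective, or equivalently that $\Pic_{\alg}^0(\gM_g^{1/r}[\epsilon]) = 0$.

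To establish this I would set $\gY := \gM_g^{1/r}[\epsilon]$ and $\gX := \widetilde{\gM}_g^{1/r}[\epsilon]$ and run the Leray spectral sequence for the $\bZ/r$-gerbe $\pi : \gY \to \gX$ with coefficients in $\overline{\mathcal{O}}_\gY^\times = \pi^{-1}\overline{\mathcal{O}}_\gX^\times$. Adjunction gives $\pi_* \overline{\mathcal{O}}_\gY^\times = \overline{\mathcal{O}}_\gX^\times$, and a local computation over an open $U \subset \gX$ trivialising the gerbe---where $\pi^{-1}(U) \simeq B\bZ/r \times U$ has algebraic line bundles classified by the characters in $\mathrm{Hom}(\bZ/r, \overline{\mathcal{O}}_\gX^\times(U)) = \mu_r(U)$---identifies $R^1\pi_* \overline{\mathcal{O}}_\gY^\times$ with the locally constant sheaf $\mu_r$ of $r$-th roots of unity on $\gX$. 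The five-term exact sequence then reads
$$0 \lra \Pic_{\alg}(\gX) \lra \Pic_{\alg}(\gY) \lra H^0(\gX;\mu_r) = \bZ/r,$$
in which the rightmost map is restriction to the $B\bZ/r$ fibre of the gerbe.

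Topologically, the Leray--Serre spectral sequence of the fibration (\ref{eq:ExtensionFibration}) yields the analogous exact sequence
$$0 \lra H^2(\gX;\bZ) \lra H^2(\gY;\bZ) \lra \bZ/r$$
already used in \S \ref{sec:HomologyThetaCharacteristics}, in which the rightmost map is again restriction to the $B\bZ/r$ fibre. Functoriality of $c_1$ fits these sequences into a commutative ladder whose right-hand vertical map is the identity $\bZ/r = \bZ/r$. Since Proposition \ref{prop:InjAlgPicTilde} gives injectivity of $c_1 : \Pic_{\alg}(\gX) \to H^2(\gX;\bZ)$, a short two-column diagram chase then forces the middle vertical $c_1 : \Pic_{\alg}(\gY) \to H^2(\gY;\bZ)$ to be injective as well.

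The hard part I expect will be the computation of $R^1\pi_* \overline{\mathcal{O}}_\gY^\times \cong \mu_r$ in the orbifold/stack setting: once one works on a model in which the gerbe trivialises locally (for instance after pulling back to the level-three cover used to exhibit $\widetilde{\gM}_g^{1/r}[\epsilon]$ as quasi-projective) the identification with characters of $\bZ/r$ is routine, but matching this with the sheaf-theoretic definition $\overline{\mathcal{O}}_\gY := \pi^{-1}\overline{\mathcal{O}}_\gX$ from the preceding subsection, and checking that the restriction-to-fibre map in the algebraic and topological Leray sequences really coincide under $\Pic_{\alg}(B\bZ/r) = \Pic_{\top}(B\bZ/r) = \bZ/r$, requires some care.
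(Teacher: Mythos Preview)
Your proposal is correct and follows essentially the same approach as the paper: both run the Leray spectral sequence for the $\bZ/r$-gerbe $\pi$ with coefficients in $\overline{\mathcal{O}}_\gY^\times$ to obtain the exact sequence $0 \to \Pic_{\alg}(\gX) \to \Pic_{\alg}(\gY) \to \bZ/r$, compare it with the analogous topological sequence, and then deduce injectivity of the middle vertical $c_1$ from Proposition~\ref{prop:InjAlgPicTilde} via a diagram chase. You supply a bit more detail than the paper does---in particular you make explicit the identification $R^1\pi_*\overline{\mathcal{O}}_\gY^\times \cong \mu_r$ via local trivialisation of the gerbe, which the paper leaves implicit---but this is exactly the content behind the paper's one-line invocation of the Leray spectral sequence.
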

\begin{proof}
For any such gerbe, applying the Leray spectral sequence to the map $\pi$ and the sheaf $\overline{\mathcal{O}}_\gY^\times$, we obtain the exact sequence
$$0 \lra \Pic_{\alg}(\gX) \lra \Pic_{\alg}(\gY) \lra H^2(G;\bZ).$$
For $\widetilde{\gM}_g^{1/r}[\epsilon] \to \gM_g^{1/r}[\epsilon]$ we obtain a commutative diagram with exact rows
\begin{diagram}[LaTeXeqno]\label{dig:Alg}
0 &\rTo& \Pic_{\alg}(\widetilde{\gM}_g^{1/r}[\epsilon]) &\rTo& \Pic_{\alg}(\gM_g^{1/r}[\epsilon]) &\rTo& \bZ/r\\
 & & \dTo & & \dTo & & \dEq\\
 0 &\rTo& H^2(\widetilde{\gM}_g^{1/r}[\epsilon];\bZ) &\rTo& H^2(\gM_g^{1/r}[\epsilon];\bZ) &\rTo& \bZ/r
\end{diagram}
with the left vertical map injective by Proposition \ref{prop:InjAlgPicTilde}. Hence the middle vertical map is also injective.
\end{proof}

We can also extract information on the image of the injective homomorphism $\Pic_{\alg}({\gM}_g^{1/r}[\epsilon]) \lra H^2({\gM}_g^{1/r}[\epsilon];\bZ)$. First we note that all the classes $\lambda^{-a/r}$ are in the image of this homomorphism, as the construction $\lambda^{-a/r} := c_1(\pi_!^K(L^{\otimes a}))$ can be performed algebraically as $\det(\pi_!(L^{\otimes a}))$ which gives a class in $\Pic_{\alg}({\gM}_g^{1/r}[\epsilon])$ with Chern class $\lambda^{-a/r}$. For $r$ odd these classes generate $H^2({\gM}_g^{1/r}[\epsilon];\bZ)$ and so the homomorphism is surjective. For $r$ even, the classes $\lambda^{a/r}$ only generate an index 2 subgroup of $H^2({\gM}_g^{1/r}[\epsilon];\bZ)$, but together with $\mu$ they generate the entire group.

\begin{lem}
The map $c_1 : \Pic_{\alg}({\gM}_g^{1/2}[\epsilon]) \to H^2({\gM}_g^{1/2}[\epsilon];\bZ)$ is surjective.
\end{lem}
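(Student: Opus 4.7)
By Example \ref{ex:rEq2}, the group $H^2(\gM_g^{1/2}[\epsilon];\bZ)$ is generated (in the stable range, which is what Theorem \ref{thm:PicardMain} concerns) by $\lambda$ and $\mu$. The classes $\lambda^{a/2}$ already lie in the image of $c_1$ on $\Pic_{\alg}$, since $\lambda^{-a/2}=c_1(\pi_!^K L^{\otimes a})$ is realised algebraically by $\det R\pi_*(L^{\otimes a})$, the determinant of cohomology. So it suffices to exhibit an algebraic line bundle whose first Chern class is $\mu$, modulo classes already known to be algebraic.

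The strategy is to use the algebro-geometric analogue of the fact that $\xi = \pi^{KO}_!(1)$ lands in $KO^{-2}$, namely the Pfaffian line bundle for theta characteristics. For the universal family $\pi:\gC_g^{1/2}\to \gM_g^{1/2}[\epsilon]$ equipped with the theta characteristic $L$ (so $L^{\otimes 2}\cong \omega_\pi$), Grothendieck--Serre duality gives a symmetric self-duality $R\pi_* L \simeq (R\pi_* L)^\vee[-1]$ on a perfect complex of virtual rank $\chi(L) = 0$. By Mumford's construction of the Pfaffian (as refined by Deligne in \emph{Le d{\'e}terminant de la cohomologie}), such a self-dual perfect complex admits a canonical algebraic square root of its determinant: there is an algebraic line bundle $\mathrm{Pf}$ on $\gM_g^{1/2}[\epsilon]$ together with an isomorphism $\mathrm{Pf}^{\otimes 2}\cong \det R\pi_* L$. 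Taking first Chern classes gives
$$2\, c_1(\mathrm{Pf}) = \lambda^{-1/2} \in H^2(\gM_g^{1/2}[\epsilon];\bZ).$$

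Combined with the identity $2\mu = \lambda^{-1/2} + 12\lambda^{1/2}$ from the introduction, this yields
$$2\bigl(c_1(\mathrm{Pf}) + 6\lambda^{1/2}\bigr) = 2\mu,$$
so the class $\delta := \mu - c_1(\mathrm{Pf}) - 6\lambda^{1/2}$ is annihilated by $2$. By Example \ref{ex:rEq2} the torsion subgroup of $H^2(\gM_g^{1/2}[\epsilon];\bZ)$ is cyclic of order $4$, generated by the class $t^{1/2,0/2} = 2\lambda^{1/2}+\lambda$, which already lies in the image of $c_1$ on $\Pic_{\alg}$ (being a combination of $\lambda^{a/2}$). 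Hence the unique $2$-torsion element $2t^{1/2,0/2}$ is algebraic, and so $\delta$ is algebraic whether it is zero or not. Consequently $\mu = c_1(\mathrm{Pf}) + 6\lambda^{1/2} - \delta$ is algebraic, and together with $\lambda$ this exhibits generators of $H^2(\gM_g^{1/2}[\epsilon];\bZ)$ in the image of $c_1$.

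The main obstacle is the appeal to the Pfaffian/square-root-of-determinant construction: one must verify that Mumford's (or Deligne's) construction of a square root of $\det R\pi_* L$ for a family of theta characteristics produces a genuine algebraic line bundle on the stack $\gM_g^{1/2}[\epsilon]$ (rather than only on $\widetilde{\gM}_g^{1/2}[\epsilon]$ or after a finite cover); it is crucial here that we work on $\gM_g^{1/2}[\epsilon]$, where the line bundle $L$ itself lives. Once this is granted, everything else is formal manipulation of the identities defining $\mu$.
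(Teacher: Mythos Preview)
Your argument is correct and takes a genuinely different route from the paper's proof. The paper treats the two parities separately: for $\epsilon=0$ it uses the theta-null divisor $\Theta_{\mathrm{null}}$ (theta characteristics with a nonzero section), whose class is $-\tfrac{1}{4}\lambda$ rationally by a result of Farkas, and deduces that $c_1(\Theta_{\mathrm{null}})$ differs from $\mu$ by a multiple of the algebraic torsion class $\lambda+4\mu$. For $\epsilon=1$ it observes that outside a codimension-$3$ locus the direct image $\pi_*(L^*)$ is a line bundle, and Serre duality gives $2c_1(\pi_*(L^*)) = \lambda^{-1/2}$, which is essentially the Pfaffian computed ``by hand'' using the generic rank. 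Your Pfaffian argument unifies both cases: it bypasses both the Farkas divisor-class computation and the codimension estimate of Teixidor~i~Bigas, at the cost of invoking the existence of the Pfaffian of a self-dual perfect complex as a black box. The paper's approach is more elementary and self-contained; yours is cleaner and independent of $\epsilon$.

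Two small points. First, there is a sign slip in your last displayed conclusion: from $\delta = \mu - c_1(\mathrm{Pf}) - 6\lambda^{1/2}$ one gets $\mu = c_1(\mathrm{Pf}) + 6\lambda^{1/2} + \delta$, not $-\delta$; this does not affect the argument. Second, your caveat about the Pfaffian is well-placed but not a genuine obstacle: the construction is functorial in families (a theta characteristic $L$ on a family $\pi:C\to S$ yields a Pfaffian line on $S$, compatibly with base change), so it descends to the moduli stack $\gM_g^{1/2}[\epsilon]$ where $L$ exists as a line bundle on the universal curve.
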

\begin{proof}
The coarse moduli space $\widetilde{\mathrm{M}}_g^{1/2}$ is in natural bijection with the set of pairs $(\Sigma, L)$ of a Riemann surface and a complex line bundle $L$ such that $L^{\otimes 2} \cong T\Sigma$. As such, $L$ admits a canonical structure of a holomorphic line bundle on $\Sigma$, and the holomorphic structure is independent of the choice of isomorphism $L^{\otimes 2} \cong T\Sigma$. At this point it is convenient to work with square roots of the cotangent bundle, so let us denote by $L^*$ the dual bundle to $L$.

There is a function $\rho : \widetilde{\mathrm{M}}_g^{1/2} \to \bN$ given by $\rho(\Sigma, L) = \mathrm{dim}H^0(\Sigma;L^*)$, where  we identify $L^*$ with its sheaf of algebraic sections. The parity of $\mathrm{dim}H^0(\Sigma;L^*)$ is locally constant on $\widetilde{\mathrm{M}}_g^{1/2}$, and agrees with the Arf invariant, but the function itself is not locally constant. The subset
$$\Theta_{\mathrm{null}} := \{(\Sigma, L) \in \widetilde{\mathrm{M}}_g^{1/2}[0] \,\, \vert \,\, \rho(\Sigma, L) > 0\}$$
is classically known to be a divisor, and the subset
$$T := \{(\Sigma, L) \in \widetilde{\mathrm{M}}_g^{1/2}[1] \,\, \vert \,\, \rho(\Sigma, L) > 1\}$$
is known to have codimension 3 as long as $g \geq 5$ \cite[Theorem 2.17]{Teixidor}.

We must now proceed by cases depending on the parity. For $\epsilon=0$, by \cite[Theorem 0.2]{FarkasEven}, the class of $\Theta_{\mathrm{null}}$ is rationally equivalent to $-\tfrac{1}{4}\lambda$ on $\widetilde{\mathrm{M}}_g^{1/2}[0]$, and it remains so when pulled back to $\widetilde{\gM}_g^{1/2}[0]$ or ${\gM}_g^{1/2}[0]$. Recall from Example \ref{ex:rEq2} that $H^2({\gM}_g^{1/2}[0];\bZ)$ is isomorphic to $\langle \lambda, \mu \,\, \vert \,\, 4(\lambda+4\mu) \rangle$. Then $4\cdot c_1(\Theta_{\mathrm{null}}) = -\lambda = 4\mu$ in the torsion-free quotient of $H^2({\gM}_g^{1/2}[0];\bZ)$, and hence
$$c_1(\Theta_{\mathrm{null}}) = \mu + A(\lambda+4\mu)$$
for some $A$. However, $\lambda+4\mu = \lambda + 2(\lambda^{-1/2} + 6\lambda^{1/2})$ is already in the image of $c_1$, and so $\mu$ is also.

For $\epsilon=1$, note that on the complement $\widetilde{\gM}_g^{1/2}[1] \setminus T$ the sheaf $(\Sigma, L) \mapsto H^0(\Sigma;L^*)$ (equivalently, the direct image $\pi_*(L^*)$) is a vector bundle of rank 1, and hence represents an element of $\Pic_{\alg}(\widetilde{\gM}_g^{1/2}[1] \setminus T)$. As $T$ as codimension 3, this extends uniquely to an element of $\Pic_{\alg}(\widetilde{\gM}_g^{1/2}[1])$. To compute the first Chern class of this line bundle, note that
$$\pi_!(L^*) = \pi_*(L^*) - R^1\pi_*(L^*)$$
as virtual sheaves on $\widetilde{\gM}_g^{1/2}[1]$, but the isomorphism $L^* \cong T^*\Sigma \otimes L$ and Serre duality shows that $R^1\pi_*(L^*) \cong (\pi_*(L^*))^*$ and so $\pi_!(L^*) = \pi_*(L^*) - \pi_*(L^*)^*$. Taking first Chern classes (over $\widetilde{\gM}_g^{1/2}[1] \setminus T$, where $\pi_*(L^*)$ is a bundle) gives
$$2\cdot c_1(\pi_*(L^*)) = \lambda^{-1/2},$$
which is $2\mu-12\lambda^{1/2}$ rationally. Thus $c_1(\pi_*(L^*)) = \mu-6\lambda^{1/2}$ modulo torsion, and as we saw above the generator $\lambda + 4\mu$ of the torsion subgroup is already known to be in the image, as is $\lambda^{1/2}$. Hence $\mu$ is too.
\end{proof}

This lemma implies that $c_1 : \Pic_{\alg}({\gM}_g^{1/r}) \to H^2({\gM}_g^{1/r};\bZ)$ is surjective for all even $r$, and hence an isomorphism, as the missing class $\mu$ is pulled back from ${\gM}_g^{1/2}$. The commutative diagram (\ref{dig:Alg}) implies that the same is true for $\widetilde{\gM}_g^{1/r}$.

\appendix

\section{A calculation in homotopy theory}\label{sec:Appendix:Calculation}
In this appendix we compute the first Chern class of the K-theory class
$$\Omega^\infty \beta^{-1} : \Omega^{\infty} \gS^{-2} \simeq \Omega^2 Q(S^0) \lra \Omega^\infty \gK \simeq \bZ \times BU,$$
which completes the proof of Proposition \ref{prop:ClassesOnQS0}. As $\Omega^2 c_2 = c_1$ under the Bott equivalence, this the same as computing the second Chern class of the unit $Q(S^0) \lra \bZ \times BU$, and then computing its double loop class.

In order to do this, we make use of the fact that the fibrations of infinite loop spaces
$$\widetilde{Q_0S^0} \lra Q_0S^0 \lra K(\bZ/2,1)$$
and
$$\widetilde{\Omega_0 QS^0} \lra \Omega ( \widetilde{Q_0S^0}) \lra K(\bZ/2,1)$$
are split as fibrations of spaces: a proof of this fact appears in \cite[Lemma 7.1]{galatius-2004}. 

We will first treat the 2-torsion. We may write down a canonical basis for the $\bF_2$-homology of $Q_0S^0$, in terms of Dyer--Lashof operations, and we include this information up to degree four in Figure \ref{fig:QS0}.
\begin{figure}[h]
\centering
\includegraphics[bb=0 0 327 110]{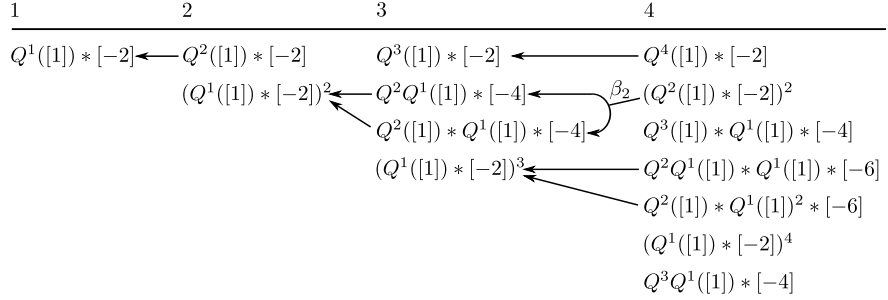}
\caption{$\bF_2$-homology of $Q_0S^0$ up to degree four. All the arrows denote primary Bockstein operations, except the indicated secondary operation. The primary operations follow from the Nishida relation $\beta Q^b = (b-1)Q^{b-1}$, and the secondary operation follows from the formula of \cite[Lemma 4.11]{CLM}.}
\label{fig:QS0}
\end{figure}
From this description we are able to calculate the 2-local homology of $Q_0S^0$ up to degree three, which we give in the following table.
\begin{center}
\begin{tabular}{lccccc}
\toprule
$i$ & $0$ & $1$ & $2$ & $3$ & $4$ \\ \toprule
$H_*(Q_0S^0;\bF_2)$ & $\bF_2$ & $\bF_2$ & $\bF_2^2$ & $\bF_2^4$ & $\bF_2^7$  \\ 
$H_*(Q_0S^0;\bZ_{(2)})$ & $\bZ_{(2)}$ & $\bZ/2$ & $\bZ/2$ & $(\bZ/2)^2 \oplus \bZ/4$ & $-$ \\ \bottomrule
\end{tabular}
\end{center}
Using the splitting of spaces $Q_0S^0 \simeq K(\bZ/2, 1) \times \widetilde{Q_0S^0}$, the known homology of $K(\bZ/2, 1)$, the K\"{u}nneth theorem and the universal coefficient theorem, we are able to compute the homology of $\widetilde{Q_0S^0}$ up to degree three, and so its cohomology up to degree four, which we give in the following table.
\begin{center}
\begin{tabular}{lccccc}
\toprule
$H_*(\widetilde{Q_0S^0};\bF_2)$ & $\bF_2$ & $0$ & $\bF_2$ & $\bF_2^2$ & $\bF_2^3$\\ 
$H_*(\widetilde{Q_0S^0};\bZ_{(2)})$ & $\bZ_{(2)}$ & $0$ & $\bZ/2$ & $\bZ/4$ & $-$\\ 
$H^*(\widetilde{Q_0S^0};\bZ_{(2)})$ & $\bZ_{(2)}$ & $0$ & $0$ & $\bZ/2$ & $\bZ/4$\\ \bottomrule
\end{tabular}
\end{center}
The universal cover $\widetilde{\Omega_0 QS^0}$ is simply-connected and has second homotopy group equal to $\pi_3(QS^0) = \bZ/24$. Thus we can compute its 2-local homology up to degree two. Using the splitting $\Omega \widetilde{Q_0S^0} \simeq K(\bZ/2, 1) \times \widetilde{\Omega_0QS^0}$ we are able to compute the 2-local homology of $\Omega \widetilde{Q_0S^0}$ up to degree two, and hence its cohomology up to degree three.
\begin{center}
\begin{tabular}{lccccc}
\toprule
$H_*(\widetilde{\Omega_0 QS^0};\bZ_{(2)})$ & $\bZ_{(2)}$ & $0$ & $\bZ/8$ & $-$ & $-$\\
$H^*(\widetilde{\Omega_0 QS^0};\bZ_{(2)})$ & $\bZ_{(2)}$ & $0$ & $0$ & $\bZ/8$ & $-$\\
$H_*(\Omega\widetilde{Q_0S^0};\bZ_{(2)})$ & $\bZ_{(2)}$ & $\bZ/2$ & $\bZ/8$ & $-$ & $-$\\
$H^*(\Omega\widetilde{Q_0S^0};\bZ_{(2)})$ & $\bZ_{(2)}$ & $0$ & $\bZ/2$ & $\bZ/8$ & $-$\\
\bottomrule
\end{tabular}
\end{center}

Using the known structure \cite{Priddy} of the homology of $\bZ \times BU$ as a module over the Dyer--Lashof algebra, it is not difficult to verify that $c_2$ reduces modulo 2 to the dual of the homology class $(Q^2([1])*[-2])^2$, and so generates the group $H^4(\widetilde{Q_0S^0};\bZ_{(2)}) = \bZ/4$.

We now consider the Serre spectral sequence for the loop-space fibration
$$\Omega \widetilde{Q_0S^0} \lra * \lra \widetilde{Q_0S^0}.$$
We see that the only differentials out of $E^2_{0, 3} = \bZ/8$ are to $E^2_{2,2} = \bZ/2$ and $E^2_{4,0} = \bZ/4$. By cardinality both of these must be surjective, and in particular $2 \in \bZ/8 = E^2_{0,3}$ transgresses to a generator of $E^2_{4,0} = \bZ/4$. Thus the loop of a generator of $H^4(\widetilde{Q_0S^0};\bZ_{(2)})$ gives twice a generator in $H^3(\Omega \widetilde{Q_0S^0};\bZ_{(2)})$.

By our calculations $H^3(\Omega \widetilde{Q_0S^0};\bZ_{2)}) \to H^3(\widetilde{\Omega_0QS^0};\bZ_{(2)})$ is an isomorphism. Consider the Serre spectral sequence for the loop-space fibration
$$\Omega^2_0 QS^0 \simeq \Omega (\widetilde{\Omega_0 QS^0}) \lra * \lra \widetilde{\Omega_0 QS^0},$$
and our calculations show that the transgression
$$\tau: H^2(\Omega^2_0 QS^0;\bZ_{(2)}) \lra H^3(\widetilde{\Omega_0 QS^0};\bZ_{(2)})$$
is an isomorphism. Hence it follows that $\Omega^2 c_2$ is twice a generator in $H^2(\Omega^2_0 QS^0;\bZ_{(2)})$.

\vspace{2ex}

The 3-local calculation is of course much easier. By \cite[Theorem 22]{Kochman}, we have that $Q^1([1])*[-3] \in H_4(\{0\} \times BU ;\bF_3)$ is the linear dual of the second Chern class, and hence the unit $Q_0S^0 \to BU$ pulls back the second Chern class to the linear dual of $Q^1([1])*[-3] \in H_4(Q_0S^0 ;\bF_3)$. As the splittings of $Q_0S^0$ and $\Omega(\widetilde{Q_0S^0})$ have factors which are trivial 3-locally, the situation is drastically simpler. It is easy to see that
\begin{center}
\begin{tabular}{lccccc}
\toprule
$i$ & $0$ & $1$ & $2$ & $3$ & $4$ \\ \toprule
$H_*(\widetilde{Q_0S^0};\bF_3)$ & $\bF_3$ & $0$ & $0$ & $\bF_3$ & $\bF_3$  \\ 
$H_*(\widetilde{Q_0S^0};\bZ_{(3)})$ & $\bZ_{(3)}$ & $0$ & $0$ & $\bZ/3$ & $0$ \\
$H^*(\widetilde{Q_0S^0};\bZ_{(3)})$ & $\bZ_{(3)}$ & $0$ & $0$ & $0$ & $\bZ/3$ \\ 
\bottomrule
\end{tabular}
\end{center}
and that the second Chern class generates $H^4(\widetilde{Q_0S^0};\bZ_{(3)})$. Hence it's second loop class generates $H^2(\Omega^2_0QS^0;\bZ_{(3)}) \cong \bZ/3$ as well.

\bibliographystyle{plain}
\bibliography{../MainBib}

\end{document}